\pgfplotsset{compat=newest}
\pgfplotsset{cycle list/Dark2}
\newcommand{\norm}[1]{\lVert #1 \rVert}
\newcommand{\ran}{\mathsf{R}}
\newcommand{\isdef}{\mathrel{\mathop:}=}
\DeclareMathOperator*{\argmin}{arg\,min}
\DeclareMathOperator{\arccosh}{arccosh}
\newtheorem{thm}{Theorem}
\newtheorem{lemma}[thm]{Lemma}
\newtheorem{cor}[thm]{Corollary}
\newtheorem{conjecture}[thm]{Conjecture}
\numberwithin{thm}{section}
\title{Optimal Polynomial Smoothers for Multigrid V-cycles}
\author[1]{James Lottes}
\affil[1]{Google Research \protect \\\small\texttt{jlottes@google.com}}
\begin{document}

\maketitle

\abstract{The idea of using polynomial methods to improve simple smoother iterations within a multigrid method for a symmetric positive definite (SPD) system is revisited. A two-level bound going back to Hackbusch is optimized by a very simple iteration, a close cousin of the Chebyshev semi-iterative method, but based on the Chebyshev polynomials of the fourth instead of first kind. A full V-cycle bound for general polynomial smoothers is derived using the V-cycle theory of McCormick. The fourth-kind Chebyshev iteration is quasi-optimal for the V-cycle bound. The optimal polynomials for the V-cycle bound can be found numerically, achieving about an 18\% lower error contraction factor bound than the fourth-kind Chebyshev iteration, asymptotically as the number of smoothing steps $k \to \infty$. Implementation of the optimized iteration is discussed, and the performance of the polynomial smoothers is illustrated with numerical examples.}

\section{Introduction}

Given a multigrid method for solving a linear system that uses simple linear iterations for smoothers, a natural question is whether Krylov (i.e., polynomial) acceleration techniques can be applied to improve the smoother iterations. General-purpose Krylov methods don't necessarily have good smoothing properties, since they are typically designed to optimize some norm of the total error and not just its ``un-smooth'' components. In particular, a method like conjugate gradients can dramatically worsen the performance of a smoother iteration on the ``high-frequency'' error components in return for a marginal improvement in the performance on the ``smooth'' error components, which should of course be left for the coarse grid correction to handle.

There are, however, polynomial methods specifically designed to be used as smoothers \cite{adams_parallel_2003,baker_multigrid_2011,kraus_polynomial_2012}. One popular strategy for symmetric positive definite (SPD) systems, suggested by Adams, Brezina, Hu, and Tuminaro \cite{adams_parallel_2003}, is to use the standard Chebyshev semi-iteration, but adjusted to target only an upper portion $[\lambda^*, \lambda_{\max}]$ of the range of eigenvalues $[\lambda_{\min}, \lambda_{\max}]$, with $\lambda^*$ an ad-hoc parameter. Another possibility, suggested by Kraus, Vassilevski, and Zikatanov \cite{kraus_polynomial_2012} is to use the best polynomial approximation to $\lambda^{-1}$ in the uniform norm, again over the interval $[\lambda^*, \lambda_{\max}]$.
We will see that we can design methods that avoid such ad-hoc parameters as $\lambda^*$, by picking polynomials that optimize multigrid convergence bounds. We will restrict our attention to SPD systems and single-step SPD smoothers (since polynomial methods tend to be most effective in the SPD setting), and examine both a two-level bound for general polynomial smoothers going back to Hackbusch \cite{hackbusch_multi-grid_1982}, as well as develop a full V-cycle convergence bound for general polynomial smoothers starting from the V-cycle theory developed by McCormick \cite{mccormick_multigrid_1985}. We will see that a very simple Chebyshev-type iteration, which avoids the need for any ad-hoc parameter, optimizes the two-level bound and is quasi-optimal for the V-cycle bound. We will also see that a small modification to the iteration, though somewhat complicated to implement, optimizes the V-cycle bound. The effectiveness (or not) of these smoother acceleration techniques are demonstrated with numerical examples.

To fix notation, let $A$ be a symmetric positive definite matrix, and consider the ``coarse-to-fine'' half V-cycle multigrid iteration $x_{n+1} = x_n + M (b - A x_n)$ determined by the iteration matrix
\[ E = I - M A = G K \]
where $G$ is an iteration matrix for the smoother, e.g., $G = (I - B A)^k$ for $k$ steps of the iteration $x_{i+1} = x_i + B(b - A x_i)$, and $K$ is the coarse grid correction
\[ K \isdef I - P M_c P^* A,\]
where the prolongation operator $P$ is an injection from a space of smaller dimension, and
where $M_c$ is defined recursively in the same way as $M$, but for the system governed by the Galerkin coarse operator
\[ A_c \isdef P^* A P, \]
and with the recursion stopping by taking $M_c = A_c^{-1}$ at the coarsest level.

We will restrict our attention primarily to bounds of $\norm{E}_A^2$, the square of the contraction factor in the energy norm of this half V-cycle. As is well-known (see, e.g., \cite{mccormick_multigrid_1985}), this suffices for the analysis of general V-cycles.
If we denote the energy inner product adjoint for an operator $T$ by $T^{\dagger}$, so that
$  A T^{\dagger} = T^* A, $
i.e., $T^{\dagger} \isdef A^{-1} T^* A$, then $E^{\dagger}$ is the iteration matrix for a ``fine-to-coarse'' half V-cycle iteration, with equal norm $\norm{E^{\dagger}}_A = \norm{E}_A$. A general V-cycle $E_V$ decomposes as some ``fine-to-coarse'' half V-cycle $(E')^\dagger$ followed by some ``coarse-to-fine'' half V-cycle $E$,
$ E_V = E (E')^{\dagger}, $
so that
\[ \norm{E_V} \le \norm{E}_A \norm{E'}_A . \]
For a symmetric V-cycle with $E' = E$ this inequality strengthens to the equality $\norm{E_V}_A = \norm{E}_A^2$ .

Instead of using the simple smoothing iteration $G = (I - BA)^k$
we will consider polynomial iterations
\[ G = p_k(BA) \]
for some polynomial $p_k(x)$ of degree $k$ with $p_k(0) = 1$. Picking $p_k$ to optimize $\norm{G}_A$ leads to Krylov acceleration methods like the Chebyshev semi-iterative method and the method of conjugate gradients. Our central concern will be instead to investigate how to choose $p_k$ to optimize the multigrid convergence rate, through achieving optimal or nearly optimal bounds on $\norm{E}_A^2$. As polynomial methods tend to work best for symmetric iterations, we will confine our attention to the case of symmetric positive definite $B$. For example, Jacobi corresponds to $B = D^{-1}$ where $D$ is the diagonal part of $A$. Furthermore, so as to remove some redundancy, we will assume $B$ is scaled such that
\[ \rho(B A) \le 1 \]
where $\rho$ denotes the spectral radius. In particular, these assumptions imply that the single step ($k=1$) smoother
\[ G = I - \omega B A \]
has contraction factor $\norm{G}_A = \rho(G) < 1$ whenever the damping parameter $\omega$ is in the range $0 < \omega < 2$.

Our results will all be expressed in terms of the approximation property
\begin{equation} \label{eq:C-def} C \isdef \norm{A^{-1} - P A_c^{-1} P^*}_{A,B}^2 \isdef \sup_{\norm{f}_B \le 1} \norm{(A^{-1} - P A_c^{-1} P^*)f}_{A}^2 . \end{equation}
Note that a different approximation property constant $C$ is associated with each level in the V-cycle. Hackbusch \cite{hackbusch_multi-grid_1982} proved that the error contraction of the V-cycle $\norm{E}_A^2$ is bounded by the largest value of
\begin{equation}\label{eq:hackbusch-v-cycle-bound} \frac{C}{C+2k} \end{equation}
across all levels for the case $G = (1 - BA)^k$, i.e., with $\omega = 1$. Using as our main tool the general V-cycle theory developed by McCormick \cite{mccormick_multigrid_1985}, we will generalize this to a bound depending on $C$ and $p_k$. We will show that using a simple Chebyshev-type iteration for the smoother, based on a three-term recurrence, results in the V-cycle error contraction $\norm{E}_A^2$ being bounded by the largest value of
\begin{equation} \label{eq:cheb-smoother-simple-bound} \frac{C}{C+\frac{4}{3}k(k+1)} \end{equation}
across all levels.

The polynomials associated with this Chebyshev-type iteration are the lesser-known Chebyshev polynomials of the \emph{third} and \emph{fourth} kinds, also known as ``airfoil'' polynomials within the aerodynamics community, where they have been extensively studied \cite{mason_chebyshev_1993,mason_chebyshev_2002,desmarais_tables_1995}. Within the multigrid community, they have found extensive use in smoothed aggregation methods, originally as the optimal polynomials for constructing the smoothing operator to apply to some tentative prolongator, but also used for the smoothing iteration itself. The first appearance traces back to the thesis of Brezina \cite{brezina_robust_1997}, with continued development in papers by Van\^{e}k, Brezina, Tezaur, and  Krızková \cite{vanek_two-level_1996,vanek_two-grid_1999,tezaur_improved_2018}, particularly \cite{vanek_nearly_2013}. The use of these polynomials for smoothing iterations outside of the context of smoothed aggregation was suggested by Adams, Brezina, Hu, and Tuminaro \cite{adams_parallel_2003}, though at the time, the authors were apparently only aware of the existence of these polynomials for specific degrees (1, 4, and 13 being the first three). As mentioned above, Adams et al.\ also suggested the use of the usual Chebyshev semi-iteration based on the polynomials of the first kind, adjusted to target the range of 
eigenvalues $[\lambda^*, \lambda_{\max}]$ instead of $[\lambda_{\min}, \lambda_{\max}]$, with $\lambda^*$ an ad-hoc parameter.

Our main contribution is to point out that there is a very simple iteration based on the three-term recurrence satisfied by the fourth-kind Chebyshev polyonimals, namely
\[ \begin{aligned}
		x_k &= x_{k-1} + z_k, \\
		z_0 &= 0, \\
		z_k &= \frac{2k-3}{2k+1} z_{k-1} + \frac{8k-4}{2k+1} \frac{1}{\rho(BA)} B (b - A x_{k-1}),
\end{aligned}\]
and to give full multi-level V-cycle convergence bounds, particularly \eqref{eq:cheb-smoother-simple-bound}, when this iteration is used as the smoother. Note that the iteration is just as simple, if not simpler, than the standard (first-kind) Chebyshev semi-iterative method, and for use as a smoother avoids the ad-hoc choice of parameter $\lambda^*$. The fourth-kind Chebyshev polynomials and associated iteration deserves more wide-spread recognition in the broader multigrid community, as a simple method for accelerating an arbitrary multigrid V-cycle for SPD systems.

\section{Fourth-kind Chebyshev Iteration}

Let us first consider the bound due to Hackbusch \cite{hackbusch_multi-grid_1982} for the two-level iteration contraction factor (taking $M_c = A_c^{-1}$),
\begin{equation} \label{eq:weak-bound}
\begin{split}
\norm{E}_A &= \norm{E^{\dagger}}_A = \norm{(I - P A_c^{-1} P^* A) G}_A \\
&\le \norm{A^{-1} - P A_c^{-1} P^*}_{A,B} \norm{AG}_{B,A} \\
&= C^{1/2} \sup_{\lambda \in \sigma(BA)} \lambda^{1/2} |p_k(\lambda)| \\
&\le C^{1/2} \sup_{0 < \lambda \le 1} \lambda^{1/2} |p_k(\lambda)|,
\end{split}
\end{equation}
where, recall, we have taken the smoothing iteration matrix to be $G = p_k(B A)$, with $p_k$ a polynomial of degree $k$ and $p_k(0) = 1$. This bound has its shortcomings: it is only for a two-level method, and it need not be smaller than one, whereas we always have $\norm{E}_A < 1$ provided $\norm{G}_A < 1$. We will be considering other bounds without these particular shortcomings later. The reason for considering the bound \eqref{eq:weak-bound} is that the problem of picking optimal polynomials for it has an elegant solution.

Optimizing the bound \eqref{eq:weak-bound} involves solving a \emph{weighted} minimax polynomial approximation problem, with weight $w(x) = x^{1/2}$. Luckily, the solution is known. The unique monic polynomials minimizing the maximum of $(1+x)^{1/2} |p(x)|$ on $[-1, 1]$ are $2^{-n}V_n(x)$, where $V_n(x)$ are the Chebyshev polynomials of the \emph{third} kind, see Mason \cite[Property 1.1]{mason_chebyshev_1993}. Up to scaling, these are Jacobi polynomials with $\alpha=-\tfrac{1}{2}$, $\beta=\tfrac{1}{2}$, but, like the Chebyshev polynomials of the first and second kinds, enjoy particularly simple forms of the various special properties of Jacobi polynomials (see \cite{mason_chebyshev_1993,mason_chebyshev_2002,desmarais_tables_1995}). For our purposes, it will be slightly more convenient to use the Chebyshev polynomials of the fourth kind $W_n(x)$, which are essentially the same: $W_n(x) = (-1)^n V_n(-x)$. They are given by
\begin{equation} \label{eq:W-defn}
W_n(x) = \frac{\sin \, (n+\tfrac{1}{2})\theta}{\sin \tfrac{1}{2}\theta}
\end{equation}
where $x = \cos \theta$. Let us map $x \in [-1, 1]$ to $\lambda \in [0, 1]$ via $x = 1-2\lambda$. The half angle formula for sine gives $\sin \tfrac{1}{2}\theta = \lambda^{1/2}$, and we see that the weighted polynomials
\begin{equation} \label{eq:eqos}
\lambda^{1/2} W_n(1 - 2\lambda) = \sin \, (n+\tfrac{1}{2})\theta
\end{equation}
equioscillate between $\pm 1$. To enforce the constraint $p_k(0) = 1$ we need to scale by the inverse of
$ W_n(1) = 2n+1, $
giving
\begin{equation}\label{eq:cheb-pk} p_k(\lambda) = \frac{1}{2k+1} W_k(1 - 2\lambda). \end{equation}
Using \eqref{eq:eqos} we see that
\begin{equation}\label{eq:cheb-weak-bound} \sup_{0 \le \lambda \le 1} \lambda^{1/2} |p_k(\lambda)| = \frac{1}{2k+1}. \end{equation}
The first few $p_k(\lambda)$ are shown in \cref{fig:p} and the corresponding weighted polynomials $\lambda^{1/2} p_k(\lambda)$, exhibiting the equioscillation property, in \cref{fig:wp}. \Cref{fig:wp2} anticipates the multilevel bound of the next section. Explicitly, the first four polynomials are
\[\begin{aligned}
  p_0(\lambda) &= 1, \\
  p_1(\lambda) &= 1 - \tfrac{4}{3} \lambda, \\
  p_2(\lambda) &= 1 - 4\lambda + \tfrac{16}{5} \lambda^2, \\
  p_3(\lambda) &= 1 - 8\lambda + 16 \lambda^2 - \tfrac{64}{7}\lambda^3 .
  \end{aligned}
\]

\begin{figure}[ptb]
  \centering
  \subfloat[][$p_k(\lambda)$]{
  	\label{fig:p}
	\begin{tikzpicture}
	\begin{axis}[
	xlabel={$\lambda$},
	%ylabel=$p_k(\lambda)$,
	ymin=-.4, ymax=1.1,
	xmin=0, xmax=1,
	grid = both,
	%axis x line = middle,
	cycle multi list={Dark2},
	trig format plots=rad,
	width=.49\linewidth,
	every axis plot/.append style={thick}
	]
	\pgfplotsinvokeforeach{0,...,6}{
	  \addplot +[ domain=0:pi, samples=201] ({lambda(x)}, {pcheb(#1, x)});
    }
	\end{axis}
	\end{tikzpicture}
  }
  \subfloat[][$\lambda^{1/2} p_k(\lambda)$]{
  	\label{fig:wp}
	\begin{tikzpicture}
	\begin{axis}[
	xlabel={$\lambda$},
	%ylabel=$\lambda^{1/2} p_k(\lambda)$,
	ymin=-.4, ymax=.4,
	xmin=0, xmax=1,
	grid = both,
	cycle multi list={Dark2},
	trig format plots=rad,
	width=.49\linewidth,
	every axis plot/.append style={thick}
	]
	\pgfplotsinvokeforeach{0,...,6}{
      \addplot +[ domain=0:pi, samples=201] ({lambda(x)}, {sqrt(lambda(x))*pcheb(#1, x)});
    }
	\end{axis}
	\end{tikzpicture}
  }

	\subfloat[][$\lambda^{1/2} p_k(\lambda) / \sqrt{1 - p_k(\lambda)^2}$]{
	\label{fig:wp2}
	\begin{tikzpicture}
	\begin{axis}[
	xlabel={$\lambda$},
	%ylabel=$p_k(\lambda)$,
	ymin=-0.4, ymax=0.65, ytick distance=0.2,
	xmin=0, xmax=1,
	grid = both,
	%axis x line = middle,
	cycle multi list={Dark2},
	trig format plots=rad,
	width=.49\linewidth,
	every axis plot/.append style={thick}
	]
	\pgfplotsset{cycle list shift=1}
	\pgfplotsinvokeforeach{1,...,6}{
      \addplot +[ domain=0:pi, samples=201] ({lambda(x)}, {fcheb(#1, x)});
    }
	\end{axis}
	\end{tikzpicture}
}

  \caption{The first few scaled and shifted Chebyshev polynomials of the fourth kind, (a) unweighted, (b) weighted by $\lambda^{1/2}$, (c) weighted by $\sqrt{\lambda/(1-p_k(\lambda)^2)}$. }
  \label{fig:p-wp}
\end{figure}

The polynomials $W_n$ satisfy the same recurrence as the Chebyshev polynomials $T_n$, $U_n$ of the first and second kinds, just with a different initial condition \cite{mason_chebyshev_1993}.
\[ W_n(x) = 2xW_{n-1}(x) - W_{n-2}(x), \qquad W_0(x) = 1, W_1(x) = 2x + 1 . \]
It follows that the scaled and shifted polynomials $p_k$ satisfy the recurrence
\[ p_k(\lambda) = \frac{4k-2}{2k+1} (1 - 2 \lambda) p_{k-1}(\lambda) - \frac{2k-3}{2k+1} p_{k-2}(\lambda). \]
We can derive an iteration based on this recurrence, exactly as is done to derive the Chebyshev semi-iterative method. The result is the iteration
\begin{equation}\label{eq:cheb-iter} \begin{aligned}
    x_k &= x_{k-1} + z_k, \\
    z_0 &= 0, \\
    z_k &= \frac{2k-3}{2k+1} z_{k-1} + \frac{8k-4}{2k+1} \frac{1}{\rho(BA)} B (b - A x_{k-1}),
\end{aligned}\end{equation}
satisfying $A^{-1} b - x_k = p_k(B A) (A^{-1} b - x_0)$. For the sake of easy reference, we have included the scaling of $B$ in \eqref{eq:cheb-iter} so that it works without our assumption $\rho(BA) \le 1$. More generally, $B$ can be scaled by any upper bound of $\rho(BA)$, with some degradation in performance. The bound
\begin{equation} \label{eq:cheb-weak-bound-alt} \norm{A p_k(B A)}_{B,A} \le \frac{1}{2k+1} \end{equation}
follows from \eqref{eq:cheb-weak-bound}, leading to the \emph{two-level} contraction factor estimate
\begin{equation} \label{eq:cheb-two-level-bound} \norm{E}_A \le \frac{C^{1/2}}{2k+1} . \end{equation}

\section{Multilevel Analysis}

We will base our multilevel analysis on the general V-cycle theory developed by McCormick \cite{mccormick_multigrid_1985}. While it is possible that an interesting analysis of polynomial smoothers could be based on the XZ identity \cite{xu_method_2002}, which considers all levels all at once, the main aim of our analysis will be to isolate the effect of the choice of polynomial from the choice of single-step smoother, so that isolating the different levels from each other is a natural first step. In retrospect, the McCormick theory can be seen as a (optimal in some sense) means of isolating the levels, making it a natural starting point for our analysis. The central idea of the theory is to decompose the solution space $\mathsf{V} = \mathbb{R}^n$ into the direct sum of the coarse space (the range of $P$) and its $A$-orthogonal complement,
\[ \mathsf{V} = \ran(P) \oplus \ran(P)^{\perp_A} . \]
Let us denote the pair of projections corresponding to this decomposition by
\[ \pi_c \isdef P A_c^{-1} P^* A, \quad\text{and}\quad \pi_f \isdef I - \pi_c. \]
Note that $\pi_f = I - P A_c^{-1} P^* A$ is the coarse grid correction with an exact coarse grid solve, and that the approximation property constant $C$ defined by \eqref{eq:C-def} can be written as $C = \norm{\pi_f A^{-1}}_{A, B}^2$. Alternately, we can write $C$ as
\begin{equation} \label{eq:C-def-alt}
	C = \sup_{u \in \ran(\pi_f) \setminus \{ 0 \}} \frac{\norm{u}_{B^{-1}}^2}{\norm{u}_{A}^2} \ge 1 .
\end{equation}
To see this, take any $A$-orthogonal basis of $\ran(P)^{\perp_A} = \ran(\pi_f)$ assembled as the columns of a matrix $Q$ so that $Q^*AQ = I$ and $\ran(Q)=\ran(\pi_f)$. Then $\pi_f = QQ^*A$ and
\[\begin{split}
	C &= \norm{\pi_f A^{-1}}_{A, B}^2 = \rho(B^{-1} A^{-1} \pi_f^* A \pi_f A^{-1}) \\
	&= \rho(B^{-1} Q Q^*)
	= \rho(Q^* B^{-1} Q) \\
	&= \sup_{v \ne 0} \frac{\norm{Q v}_{B^{-1}}^2}{\norm{v}_2^2}
	= \sup_{v \ne 0} \frac{\norm{Q v}_{B^{-1}}^2}{\norm{Q v}_A^2}
	= \sup_{u \in \ran(Q) \setminus \{0\}} \frac{\norm{u}_{B^{-1}}^2}{\norm{u}_A^2} .
\end{split}\]
The lower bound of $1$ follows from our assumption $\rho(BA) \le 1$.

The McCormick bound involves the approximation constant associated with the \emph{symmetrized smoother} $N$, defined so that the corresponding iteration matrix is
\begin{equation}\label{eq:1-NA} I - N A = G^{\dagger} G = A^{-1} G^* A G , \end{equation}
i.e., the smoother followed by its adjoint. Explicitly, $N \isdef A^{-1} - A^{-1} G^* A G A^{-1}$ and, provided $\norm{G}_A < 1$, we see that
\begin{equation}\label{eq:N<Ai} 0 < N \le A^{-1} . \end{equation}
That is, $N$ is SPD and $A^{-1} - N$ is symmetric positive semidefinite.

\begin{lemma}[McCormick V-cycle bound] \label{lem:McCormick-bound}
	If the smoother on each level is a contraction in the energy norm, $\norm{G}_A < 1$, then the symmetric V-cycle contraction factor is bounded by
	\[ \norm{E}_A^2 \le 1 - C_N^{-1} \]
	where $C_N$ is the largest value across all levels of the approximation property constant
	\[ \norm{\pi_f A^{-1}}_{A, N}^2 = \sup_{u \in \ran(\pi_f) \setminus \{ 0 \}} \frac{\norm{u}_{N^{-1}}^2}{\norm{u}_{A}^2} \ge 1 \]
	defined in terms of the symmetrized smoother $N \isdef A^{-1} - A^{-1} G^* A G A^{-1}$.
\end{lemma}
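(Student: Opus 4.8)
The plan is to assemble into a clean induction on the number of grids the argument already spelled out in the paragraphs preceding the lemma; since the statement is essentially a summary, the work is organizational. First I would record the algebraic facts that make the coarse correction tractable. The operators $\pi_c = P A_c^{-1} P^* A$ and $\pi_f = I - \pi_c$ are complementary projections that are self-adjoint in the energy inner product (so $\norm{u}_A^2 = \norm{\pi_f u}_A^2 + \norm{\pi_c u}_A^2$), and using $A_c = P^* A P$ one checks $\pi_c P = P$, hence $\ran(\pi_c) = \ran(P)$ and $\pi_f$ annihilates $\ran(P)$. The multilevel coarse correction $K = I - P M_c P^* A$ then satisfies $K \pi_f = \pi_f$ and $K P = P E_c$ with $E_c = I - M_c A_c$.

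Second, for the inductive step I fix an error $e$ on the fine level, write $e = \pi_f e + \pi_c e$ with $\pi_c e = P e_c$, $e_c = A_c^{-1} P^* A e$, and use the identities above to get $K e = \pi_f e + P E_c e_c$. Since the two summands lie in $\ran(\pi_f)$ and $\ran(P) = \ran(\pi_c)$ respectively, we have $\pi_f(Ke) = \pi_f e$ and $\pi_c(Ke) = P E_c e_c$, so applying the smoothing-factor hypothesis \eqref{eq:smoothing-factor} to the vector $Ke$ gives $\norm{Ee}_A^2 = \norm{GKe}_A^2 \le \alpha \norm{\pi_f e}_A^2 + \norm{P E_c e_c}_A^2$. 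Bounding $\norm{P E_c e_c}_A = \norm{E_c e_c}_{A_c} \le \norm{E_c}_{A_c} \norm{e_c}_{A_c} = \norm{E_c}_{A_c} \norm{\pi_c e}_A$ and invoking Pythagoras yields $\norm{Ee}_A^2 \le \max(\alpha, \norm{E_c}_{A_c}^2) \norm{e}_A^2$. The recursion bottoms out at the coarsest level, where $M_c = A_c^{-1}$ forces $E_c = 0$, so by induction $\norm{E}_A^2 \le \max_\ell \alpha_\ell$ with $\alpha_\ell$ the smoothing factor on level $\ell$.

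Third, I would verify that $\norm{G}_A < 1$ lets one take $\alpha = 1 - C_N^{-1}$ in \eqref{eq:smoothing-factor} on each level. The symmetrized smoother $N = A^{-1} - A^{-1} G^* A G A^{-1}$ satisfies $A N A = A - G^* A G$, which is SPD precisely when $\norm{G}_A < 1$; this gives $\norm{e}_A^2 - \norm{Ge}_A^2 = \norm{Ae}_N^2$, so the smallest admissible constant, $\sup_{\pi_f e \ne 0}(\norm{Ge}_A^2 - \norm{\pi_c e}_A^2)/\norm{\pi_f e}_A^2 = 1 - \inf_{\pi_f e \ne 0} \norm{Ae}_N^2 / \norm{\pi_f e}_A^2$, equals $1 - C_N^{-1}$ with $C_N = \sup_e \norm{\pi_f e}_A^2 / \norm{Ae}_N^2$ (the case $\pi_f e = 0$ is covered since $\norm{G}_A \le 1$). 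Substituting $r = Ae$ rewrites $C_N$ as $\norm{\pi_f A^{-1}}_{A,N}^2$, and repeating verbatim the cyclic-spectral-radius and Pythagorean manipulation used for \eqref{eq:C-def-alt}, with $N$ in place of $B$, gives $C_N = \sup_{u \in \ran(\pi_f) \setminus \{0\}} \norm{u}_{N^{-1}}^2 / \norm{u}_A^2$; the bound $N \le A^{-1}$ from \eqref{eq:N<Ai} gives $N^{-1} \ge A$ and hence $C_N \ge 1$. Taking $C_N$ to be the largest such constant over all levels, $\norm{E}_A^2 \le \max_\ell \alpha_\ell = 1 - 1/\max_\ell C_{N,\ell} = 1 - C_N^{-1}$.

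I do not expect a serious obstacle, as the lemma repackages the preceding discussion; the effort is bookkeeping. The points needing care are keeping the level index straight through the induction, invoking \eqref{eq:smoothing-factor} at $Ke$ rather than $e$ and confirming its $\pi_f$, $\pi_c$ components are as claimed, and noting that strict contraction of $G$ is exactly what makes $N$ (hence $N^{-1}$ and $C_N$) well defined and SPD. The only mildly delicate step is the equivalence of the two expressions for $C_N$, but this is the identical computation already carried out for $C$ and only requires $N \succ 0$ to run.
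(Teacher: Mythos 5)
Your proposal is correct and follows essentially the same route as the paper, which presents this argument in the discussion immediately preceding the lemma: the $A$-orthogonal splitting $e = \pi_f e + \pi_c e$, the identities $K\pi_f = \pi_f$ and $KP = PE_c$ feeding the level induction, the choice $\alpha = 1 - C_N^{-1}$ via the symmetrized smoother $N$, and the same cyclic-spectral-radius computation to recast $C_N$ as a supremum over $\ran(\pi_f)$. The points you flag as needing care (applying \eqref{eq:smoothing-factor} at $Ke$, and $\norm{G}_A<1$ making $N$ SPD) are exactly the right ones and are handled correctly.
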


 This is essentially McCormick's Lemma~2.3 \cite{mccormick_multigrid_1985}, except that we have included the alternate expression analogous to \eqref{eq:C-def-alt} for $C_N$ (corresponding to the reciprocal of McCormick's $\beta$), which anticipates some of the ideas from McCormick's Theorem~3.4. In this case, the lower bound of $1$ for $C_N$ follows from \eqref{eq:N<Ai}. McCormick's V-cycle bound in the same form can also be found in Vassilevski \cite[p.\ 146]{vassilevski_multilevel_2008} as Theorem 5.16 together with Lemma 5.17.

It is now fairly straight-forward to prove the following V-cycle bound for our main case of interest, polynomial smoothers $G = p(BA)$. Like the two-level bound \eqref{eq:weak-bound}, this bound only depends on $C = \norm{\pi_f A^{-1}}_{A,B}^2$ and $p$. The proof is quite similar to that of Theorem~3.4 of McCormick \cite{mccormick_multigrid_1985}.

\begin{lemma} \label{lem:multilevel-poly-bound}
Let the smoother iteration (on each level) be given by
\[ G = p(BA) \]
where $B$ is symmetric positive definite and scaled such that $\rho(BA) \le 1$, and $p(x)$ is a polynomial satisfying $p(0) = 1$ and $|p(x)| < 1$ for $0 < x \le 1$, possibly different on each level. Then the V-cycle contraction factor $\norm{E}_A^2$ is bounded by the largest value across all levels of
\[ \frac{C}{C+\gamma^{-1}} \]
where $C$ is the approximation property constant $\norm{\pi_f A^{-1}}_{A, B}^2$ and
\begin{equation} \label{eq:gamma-defn} \gamma = \sup_{0 < \lambda \le 1} \frac{\lambda \, p(\lambda)^2}{1 - p(\lambda)^2} . \end{equation}
\end{lemma}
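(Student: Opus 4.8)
The plan is to apply \cref{lem:McCormick-bound} and then bound its abstract constant $C_N$ by $1 + \gamma C$. Two structural facts get things started. First, $G = p(BA)$ is self-adjoint in the energy inner product: $G^{\dagger} = A^{-1}p(AB)A = p\bigl(A^{-1}(AB)A\bigr) = p(BA) = G$. Consequently $\norm{G}_A = \rho(G) = \max_{\lambda\in\sigma(BA)}\lvert p(\lambda)\rvert < 1$, since $\sigma(BA)\subseteq(0,1]$ and $\lvert p\rvert < 1$ there, so \cref{lem:McCormick-bound} indeed applies on every level. Second, because $G^{\dagger} = G$ the symmetrized smoother is a square: from \eqref{eq:1-NA}, $I - NA = G^{\dagger}G = p(BA)^2$, so $N = (I - p(BA)^2)A^{-1} = A^{-1}(I - p(AB)^2)$, which is SPD --- the eigenvalues $1 - p(\lambda)^2$ of $I - p(BA)^2$ are strictly positive on $\sigma(BA)$ --- and thus $N^{-1}$ is well defined.

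The crux is the operator inequality
\[ N^{-1} \le A + \gamma B^{-1} , \]
i.e.\ $\norm{u}_{N^{-1}}^2 \le \norm{u}_A^2 + \gamma \norm{u}_{B^{-1}}^2$ for all $u$. To prove it I would diagonalize $BA$ in the $B^{-1}$ inner product (equivalently, conjugate by $B^{1/2}$ and work with the SPD matrix $\tilde A = B^{1/2}AB^{1/2}$, which is similar to $BA$, hence has $\sigma(\tilde A) = \sigma(BA)\subseteq(0,1]$). Either way, the three quadratic forms above are simultaneously diagonalized, with $N^{-1}$ carrying the eigenvalue $f(\lambda) \isdef \lambda/(1 - p(\lambda)^2)$ where $A$ carries $\lambda$ and $B^{-1}$ carries $1$. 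The inequality then reduces to the scalar bound $f(\lambda) = \lambda + \lambda\,p(\lambda)^2/(1 - p(\lambda)^2) \le \lambda + \gamma$ on $(0,1]$, which is exactly the definition \eqref{eq:gamma-defn} of $\gamma$.

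With the operator inequality in hand, the alternate form \eqref{eq:C-def-alt} of the approximation constant does the rest: for $u \in \ran(\pi_f)\setminus\{0\}$,
\[ \frac{\norm{u}_{N^{-1}}^2}{\norm{u}_A^2} \le 1 + \gamma\,\frac{\norm{u}_{B^{-1}}^2}{\norm{u}_A^2} \le 1 + \gamma C , \]
so $C_N = \norm{\pi_f A^{-1}}_{A,N}^2 \le 1 + \gamma C$ on each level. Since $t \mapsto 1 - t^{-1}$ is increasing, \cref{lem:McCormick-bound} gives
\[ \norm{E}_A^2 \le 1 - C_N^{-1} \le 1 - (1 + \gamma C)^{-1} = \frac{C}{C + \gamma^{-1}} , \]
and taking the largest value of the right-hand side over all levels completes the proof.

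The only step I expect to require care is the identification of $N$ and the passage to $N^{-1} \le A + \gamma B^{-1}$: because $BA$ and $AB$ do not commute one has to track products carefully, and it is the change of variables by $B^{1/2}$ (or the $B^{-1}$-orthonormal eigenbasis of $BA$) that makes the simultaneous diagonalization, and hence the functional calculus, transparent. It is also worth noting in passing that $\gamma$ is finite and strictly positive, since $\lambda/(1 - p(\lambda)^2)$ is continuous on $(0,1]$ with the finite limit $-1/(2p'(0))$ as $\lambda \to 0^+$, and $p$ is not identically $1$.
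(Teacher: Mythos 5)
Your proof is correct and follows essentially the same route as the paper: both apply \cref{lem:McCormick-bound} and establish $C_N \le 1 + \gamma C$ by bounding the \emph{difference} $N^{-1}-A$ against $\gamma B^{-1}$ rather than bounding $N^{-1}$ against $A$ directly, using $N^{-1}=A(I-p(BA)^2)^{-1}$ and the restriction of $u$ to $\ran(\pi_f)$ via \eqref{eq:C-def-alt}. Your operator inequality $N^{-1}\le A+\gamma B^{-1}$, proved by simultaneous diagonalization, is just a repackaging of the paper's identity $\sup_{u\ne 0}(\norm{u}_{N^{-1}}^2-\norm{u}_A^2)/\norm{u}_{B^{-1}}^2=\rho(B[N^{-1}-A])=\sup_{\lambda\in\sigma(BA)}\lambda[(1-p(\lambda)^2)^{-1}-1]\le\gamma$.
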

\begin{proof}
We have
\[\begin{split}
 C_N &= \sup_{u \in \ran(\pi_f) \setminus \{ 0 \}} \frac{\norm{u}_{N^{-1}}^2}{\norm{u}_{A}^2}
      = 1 + \sup_{u \in \ran(\pi_f) \setminus \{ 0 \}} \frac{\norm{u}_{N^{-1}}^2 - \norm{u}_{A}^2}{\norm{u}_{A}^2} \\
      &\le 1 + \left(\sup_{u \ne 0} \frac{\norm{u}_{N^{-1}}^2 - \norm{u}_{A}^2}{\norm{u}_{B^{-1}}^2} \right)
      \left(\sup_{u \in \ran(\pi_f) \setminus \{ 0 \}} \frac{\norm{u}_{B^{-1}}^2}{\norm{u}_{A}^2} \right) \\
      &\le 1 + \gamma C
\end{split}\]
since
\[\begin{split}
\sup_{u \ne 0} \frac{\norm{u}_{N^{-1}}^2 - \norm{u}_{A}^2}{\norm{u}_{B^{-1}}^2}
 &= \rho(B[N^{-1} - A]) \\
 &= \rho(BA [(I - p(BA)^2)^{-1} - I]) \\
 &= \sup_{\lambda \in \sigma(BA)} \lambda \, [(1 - p(\lambda)^2)^{-1} - 1] \le \gamma
\end{split}\]
where we have used that $N^{-1} = A (I - G^2)^{-1}$, which follows from \eqref{eq:1-NA} as $G^{\dagger} = G$. The result now follows by applying the previous lemma.
\end{proof}

As a quick sanity check, we note that, as a direct consequence, this lemma recovers Hackbusch's bound \eqref{eq:hackbusch-v-cycle-bound} on the contraction factor for the V-cycle with a simple iteration.

\begin{cor} \label{cor:simp-bound}
Let $B$ be SPD and scaled such that $\rho(BA) \le 1$ (on each level). If the smoother is $k$ steps of a simple damped iteration,
\[ G = (I - \omega BA)^k, \]
for possibly different $0 < \omega < 2$ and $k$ on each level,
then $\norm{E}_A^2$ is bounded by the largest value of
\[ \frac{C}{C + 2 \omega k} \]
provided
\begin{equation}\label{eq:omega-condition} (1 - \omega)^{2k} \le \frac{1}{1+2\omega k} . \end{equation}
In particular, $\omega \le \tfrac{3}{2}$ is necessary for $k=1$ and sufficient for all $k$. Asymptotically, the largest permissible $\omega$ for the bound to hold is
\[ \omega = 2 - \frac{\log 4k}{2k} + o(k^{-1}) \quad\text{as } k \to \infty . \]
\end{cor}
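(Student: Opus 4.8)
The plan is to invoke \cref{lem:multilevel-poly-bound} with the polynomial $p(\lambda) = (1-\omega\lambda)^k$ (on each level, using that level's $\omega,k$) and to show that hypothesis \eqref{eq:omega-condition} is exactly what forces the quantity $\gamma$ of that lemma to satisfy $\gamma \le \tfrac{1}{2\omega k}$; since $C/(C+\gamma^{-1})$ is decreasing in $\gamma^{-1}$, this gives $\norm{E}_A^2 \le C/(C+2\omega k)$. First one checks the hypotheses of the lemma: $p(0)=1$ is immediate, and $|p(\lambda)| = |1-\omega\lambda|^k < 1$ for $0 < \lambda \le 1$ because then $0 < \omega\lambda \le \omega < 2$. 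Next, with $q(\lambda) = p(\lambda)^2 = (1-\omega\lambda)^{2k} < 1$, the inequality $\lambda q(\lambda)/(1-q(\lambda)) \le \tfrac{1}{2\omega k}$ rearranges to $(1-\omega\lambda)^{2k}(1+2\omega k\lambda)\le 1$, and after the substitution $s = \omega\lambda \in (0,\omega]$ this reads $h(s)\le 1$, where $h(s) \isdef (1-s)^{2k}(1+2ks)$. So $\gamma \le \tfrac{1}{2\omega k}$ exactly when $\sup_{0<s\le\omega} h(s)\le 1$.

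The crux is then a short study of $h$. Differentiating and simplifying,
\[ h'(s) = 2k(1-s)^{2k-1}\bigl[(1-s)-(1+2ks)\bigr] = -2k(2k+1)\,s\,(1-s)^{2k-1}, \]
so $h(0)=1$, $h$ is strictly decreasing on $(0,1)$ (hence $h<1$ there), $h(1)=0$, and $h$ is strictly increasing on $(1,2)$. Therefore $\sup_{0<s\le\omega}h(s)\le 1$ holds automatically when $\omega\le 1$, while for $1<\omega<2$ it holds if and only if $h(\omega)\le 1$, i.e.\ $(1-\omega)^{2k}\le \tfrac{1}{1+2\omega k}$, which is \eqref{eq:omega-condition} (and \eqref{eq:omega-condition} is also automatic when $\omega\le 1$). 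Combined with \cref{lem:multilevel-poly-bound}, taking the largest bound over levels, this proves the stated estimate.

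The three supplementary claims follow by elementary manipulation of $(1-\omega)^{2k}(1+2\omega k)\le 1$. For $k=1$ the left side equals $1-3\omega^2+2\omega^3 = 1-\omega^2(3-2\omega)$, so the condition is equivalent to $\omega^2(2\omega-3)\le 0$, i.e.\ $\omega\le\tfrac32$, which is the necessity for $k=1$. For sufficiency for all $k$, if $\omega\le\tfrac32$ the binomial bound $(1+2\omega)^k\ge 1+2\omega k$ gives
\[ (1-\omega)^{2k}(1+2\omega k) \le (1-\omega)^{2k}(1+2\omega)^k = \bigl[(1-\omega)^2(1+2\omega)\bigr]^k \le 1, \]
the last step being the $k=1$ computation again. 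Finally, since $h$ increases on $(1,2)$ from $h(1)=0$ to $h(2^-)=1+4k>1$, there is a unique $\omega^\ast\in(1,2)$ with $h(\omega^\ast)=1$, and $\omega^\ast$ is the largest permissible $\omega$. Writing $\omega^\ast = 2-\epsilon$, the defining equation becomes $(1-\epsilon)^{2k}(1+4k-2k\epsilon)=1$; taking logarithms, $2k\log(1-\epsilon) = -2k\epsilon + O(k\epsilon^2)$ and $\log(1+4k-2k\epsilon) = \log 4k + O((\log k)/k)$ near the solution, whence $2k\epsilon = \log 4k + O((\log k)^2/k)$, that is $\epsilon = \tfrac{\log 4k}{2k} + o(k^{-1})$, as claimed.

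I expect no real difficulty beyond the factorization of $h'(s)$ — which is what makes the whole argument collapse to one-variable calculus — together with the routine bookkeeping of error terms in the final asymptotic expansion.
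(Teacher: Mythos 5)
Your proof is correct, and at the top level it follows the same strategy as the paper: instantiate \cref{lem:multilevel-poly-bound} with $p(\lambda)=(1-\omega\lambda)^k$ and show that \eqref{eq:omega-condition} is precisely the condition under which $\gamma^{-1}=2\omega k$. Where you diverge is in how the supremum defining $\gamma$ is controlled. The paper asserts (without proof) that $f(\lambda)=\lambda p(\lambda)^2/(1-p(\lambda)^2)$ is convex on $[0,1]$, so that its supremum sits at an endpoint, and then compares $f(0^+)^{-1}=2\omega k$ with $f(1)^{-1}=(1-\omega)^{-2k}-1$; the latter comparison is again exactly \eqref{eq:omega-condition}. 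You instead rearrange the target inequality into $h(s)=(1-s)^{2k}(1+2ks)\le 1$ with $s=\omega\lambda$ and exploit the clean factorization $h'(s)=-2k(2k+1)s(1-s)^{2k-1}$, which gives the monotonicity of $h$ on $(0,1)$ and $(1,2)$ outright. This buys you two things: your argument is fully self-contained (no unverified convexity claim), and the same function $h$ immediately yields the three supplementary assertions (necessity and sufficiency of $\omega\le\tfrac32$, and the asymptotics of the threshold $\omega^\ast$), which the paper states but does not prove. Your computations for those — the $k=1$ factorization $1-\omega^2(3-2\omega)$, the Bernoulli step $(1+2\omega)^k\ge 1+2\omega k$, and the expansion $2k\epsilon=\log 4k+o(1)$ — all check out.
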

\begin{proof} Here $p(\lambda) = (1-\omega \lambda)^k$ and $f(\lambda) \isdef \lambda p(\lambda)^2 / (1 - p(\lambda)^2)$ is convex on $[0, 1]$, so that the supremum of $f$ is attained at either $\lambda = 1$ where $f(1)^{-1} = (1-\omega)^{-2k}-1$ or (in the limit) at $\lambda = 0$, where $\lim_{\lambda \to 0} f(\lambda)^{-1} = 2 \omega k$. The latter is smaller when \eqref{eq:omega-condition} holds. \end{proof}

\subsection{Fourth-kind Chebyshev Iteration}

The bound for the V-cycle using the fourth-kind Chebyshev iteration as smoother also follows directly from \cref{lem:multilevel-poly-bound}. As illustrated in \cref{fig:wp2}, the supremum determining $\gamma$ for the scaled and shifted Chebyshev polynomials of the fourth kind is attained in the limit $\lambda \to 0$. We prove this rigorously in \cref{sec:cheb-bound-proof}. The following bound then follows by an application of L'H\^{o}pital's rule.

\begin{cor} \label{cor:cheb-bound}
	Let $B$ be SPD and scaled such that $\rho(BA) \le 1$ (on each level).
	If \eqref{eq:cheb-iter} is used for the smoother iteration, so that
	  \[ G = p_k(BA), \]
	where $p_k$ are the scaled and shifted Chebyshev polynomials of the fourth kind given by \eqref{eq:cheb-pk}, then $\norm{E}_A^2$ is bounded by the largest value of
	\begin{equation}\label{eq:cheb-simp-bound} \frac{C}{C + \tfrac{4}{3} k (k+1) }. \end{equation}
\end{cor}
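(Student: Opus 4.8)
The plan is to apply \cref{lem:multilevel-poly-bound} with $p = p_k$, the scaled and shifted fourth-kind Chebyshev polynomial from \eqref{eq:cheb-pk}. The only hypothesis of that lemma not immediate from the construction is $|p_k(\lambda)| < 1$ for $0 < \lambda \le 1$, which I would dispatch quickly: writing $p_k(\lambda) = \prod_{j=1}^{k}\bigl(1 - \lambda\csc^2\tfrac{j\pi}{2k+1}\bigr)$ (its roots are the zeros of $W_k$), for $0 < \lambda < \lambda_1 \isdef \sin^2\tfrac{\pi}{2k+1}$ every factor lies in $(0,1)$ so $0 < p_k(\lambda) < 1$, while for $\lambda_1 \le \lambda \le 1$ the bound \eqref{eq:cheb-weak-bound} gives $|p_k(\lambda)| \le \tfrac{1}{(2k+1)\sqrt{\lambda_1}} = \tfrac{1}{(2k+1)\sin(\pi/(2k+1))} \le \tfrac12$ by Jordan's inequality $\sin x \ge \tfrac{2}{\pi}x$. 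Granting this, \cref{lem:multilevel-poly-bound} gives $\norm{E}_A^2 \le C/(C+\gamma^{-1})$ with $\gamma$ as in \eqref{eq:gamma-defn}; since $t \mapsto C/(C+t)$ is decreasing it then suffices to show $\gamma^{-1} \ge \tfrac43 k(k+1)$.

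The central step is to convert the reciprocal quantity into a trigonometric inequality. Put $m \isdef 2k+1$ and parametrize $\lambda = \sin^2\psi$ with $\psi \in (0,\tfrac{\pi}{2}]$ (so that $\theta = 2\psi$ in the notation of \eqref{eq:eqos}). Then \eqref{eq:eqos} and \eqref{eq:cheb-pk} give $\lambda\,p_k(\lambda)^2 = \sin^2(m\psi)/m^2$, hence
\[ \frac{1 - p_k(\lambda)^2}{\lambda\,p_k(\lambda)^2} = \frac{m^2}{\sin^2(m\psi)} - \frac{1}{\sin^2\psi} =: g(\psi), \]
where $g = +\infty$ at the points with $\sin(m\psi) = 0$ (there $p_k(\lambda) = 0$ and the ratio in \eqref{eq:gamma-defn} vanishes). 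Since $\tfrac43 k(k+1) = \tfrac13\bigl((2k+1)^2 - 1\bigr) = \tfrac{m^2-1}{3}$, the corollary reduces to proving $g(\psi) \ge \tfrac{m^2-1}{3}$ on $(0,\tfrac{\pi}{2}]$, which I would do by splitting the interval at the first pole $\psi = \pi/m$ of $g$.

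On $(0, \pi/m)$: differentiating the cotangent multiplication formula $\sum_{j=0}^{m-1}\cot(\psi + \tfrac{j\pi}{m}) = m\cot(m\psi)$ gives $\sum_{j=0}^{m-1}\csc^2(\psi + \tfrac{j\pi}{m}) = m^2\csc^2(m\psi)$, so after removing the $j=0$ term, $g(\psi) = \sum_{j=1}^{m-1}\csc^2(\psi + \tfrac{j\pi}{m})$. For $\psi$ in this range every argument $\psi + \tfrac{j\pi}{m}$ and every tangent point $\tfrac{j\pi}{m}$ lies in $(0,\pi)$, where $\csc^2$ is convex, so $\csc^2(\psi + \tfrac{j\pi}{m}) \ge \csc^2\tfrac{j\pi}{m} - 2\psi\,\csc^2\tfrac{j\pi}{m}\cot\tfrac{j\pi}{m}$; summing and using the classical identities $\sum_{j=1}^{m-1}\csc^2\tfrac{j\pi}{m} = \tfrac{m^2-1}{3}$ and $\sum_{j=1}^{m-1}\csc^2\tfrac{j\pi}{m}\cot\tfrac{j\pi}{m} = 0$ (the summand is odd under $j \mapsto m-j$) kills the $\psi$-linear term and leaves $g(\psi) \ge \tfrac{m^2-1}{3}$. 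On $[\pi/m, \tfrac{\pi}{2}]$ a crude bound suffices: from $\sin^2(m\psi) \le 1$, $g(\psi) \ge m^2 - \csc^2\psi$, and since $\csc^2\psi$ is decreasing there, $\csc^2\psi \le \csc^2(\pi/m) \le \tfrac14 m^2$ (again by $\sin\tfrac{\pi}{m} \ge \tfrac2m$), so $g(\psi) \ge \tfrac34 m^2 \ge \tfrac{m^2-1}{3}$. Combining the two ranges finishes the proof; the degenerate case $k = 0$ (where $\tfrac43 k(k+1) = 0$) is trivial since $\norm{E}_A^2 \le 1$ always.

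The main obstacle is the inequality $g(\psi) \ge \tfrac{m^2-1}{3}$ itself: equality holds in the limit $\psi \to 0^+$, so it does not follow from any single convexity or Cauchy--Schwarz estimate, and a naive term-by-term comparison fails. What makes it work is that near $\psi = 0$ the tangent-line lower bound on $(0,\pi/m)$ is exact to first order while its antisymmetric first-order correction sums to zero, recovering precisely the limiting value $g(0^+) = \tfrac{m^2-1}{3}$; away from $0$, the bound $\sin^2(m\psi) \le 1$ leaves an ample margin. One should also take care that the classical cotangent and cosecant-squared identities are invoked on the correct branch of $\csc^2$ — which is exactly why the split is made at the first pole $\psi = \pi/m$.
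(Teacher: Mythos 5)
Your proof is correct, and at the top level it follows the same route as the paper: apply \cref{lem:multilevel-poly-bound} and show $\gamma^{-1} \ge \tfrac43 k(k+1)$. The difference is in how the key claim is established. The paper simply invokes \eqref{eq:lim-0} to evaluate the ratio in the limit $\lambda \to 0$ (giving $1/(-2p_k'(0)) = \tfrac43 k(k+1)$) and then \emph{asserts}, with reference to \cref{fig:wp2}, that the supremum defining $\gamma$ is attained in that limit; no analytic proof of this global fact is given. You instead prove it: the substitution $\lambda = \sin^2\psi$ turns the claim into $m^2\csc^2(m\psi) - \csc^2\psi \ge \tfrac{m^2-1}{3}$, which you verify on $(0,\pi/m)$ via the cosecant-squared multiplication identity $\sum_{j=0}^{m-1}\csc^2(\psi + \tfrac{j\pi}{m}) = m^2\csc^2(m\psi)$ together with convexity of $\csc^2$ and the cancellation $\sum_{j=1}^{m-1}\csc^2\tfrac{j\pi}{m}\cot\tfrac{j\pi}{m}=0$, and on $[\pi/m,\tfrac{\pi}{2}]$ by the crude bound $g \ge m^2 - \csc^2(\pi/m) \ge \tfrac34 m^2$. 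All the identities and estimates you use check out (including the verification of the hypothesis $|p_k| < 1$ on $(0,1]$ and the trivial $k=0$ case, neither of which the paper addresses), so your argument buys a self-contained, figure-free proof of the corollary at the cost of a page of classical trigonometric identities; the paper's version buys brevity by leaning on the equioscillation picture.
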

\begin{proof} See \cref{sec:cheb-bound-proof}. \end{proof}

\subsection{Optimized Polynomials}

The scaled and shifted Chebyshev polynomials of the fourth kind \eqref{eq:cheb-pk} are not optimal for the multilevel bound of \cref{lem:multilevel-poly-bound}, as they minimize the supremum of $\lambda p(\lambda)^2$ and not $\lambda p(\lambda)^2/(1-p(\lambda)^2)$. The optimal polynomials for the multilevel bound can be found numerically (see \cref{sec:app-opt-poly} for details). Explicitly, the first two are
\[\begin{aligned}
	p_1(\lambda) &= 1 - \tfrac{3}{2} \lambda, \\
	p_2(\lambda) &= (1 - \tfrac{\sqrt{5}}{2} \lambda)(1-\tfrac{5+\sqrt{5}}{2} \lambda) = 1 - (\tfrac{5}{2} + \sqrt{5})\lambda + \tfrac{5}{4}(1+\sqrt{5}) \lambda^2 .
\end{aligned}\]
The first few optimal polynomials are shown in \cref{fig:optp}, and they can be seen to exhibit the equioscillation property in \cref{fig:opt-wp}. Numerical evidence (see \cref{tbl:gamma-inv-opt}) supports the conjecture that, asymptotically, these polynomials achieve
\[
\gamma_{\text{opt}}^{-1} \sim 4\left(\frac{2k+1}{\pi}\right)^2 - \frac{2}{3} + \frac{1}{60} \left(\frac{2k+1}{\pi}\right)^{-2} + O(k^{-4}) \quad\text{as } k \to \infty,
\]
with, moreover, the first two terms of the series giving a fairly good lower bound of $\gamma_{\text{opt}}^{-1}$. Assuming this conjecture holds, we have the following V-cycle convergence bound.

\begin{conjecture} \label{conj:opt-bound}
	Let $B$ be SPD and scaled such that $\rho(BA) \le 1$ (on each level). Let the smoother on each level be given by $G = p_k(BA)$ (for possibly different $k$) where $p_k$ are the optimal polynomials for \cref{lem:multilevel-poly-bound},
	\[ p_k = \argmin_{p \in \mathbb{P}_k,\, p(0)=1} \sup_{0 < \lambda \le 1} \frac{\lambda p(\lambda)^2}{1-p(\lambda)^2}. \]
	Here $\mathbb{P}_k$ denotes the space of real polynomials of degree at most $k$. Then $\norm{E}_A^2$ is bounded by the largest value of
\begin{equation}\label{eq:opt-bound}
	\frac{C}{C+\tfrac{4}{\pi^2}(2k+1)^2-\tfrac{2}{3}}
\end{equation}
across all levels.
\end{conjecture}

As hinted at in \cref{tbl:gamma-inv-opt}, we have verified the conjecture numerically for all practical values of $k$ (into the hundreds). In the limit $k \to \infty$, the conjectured bound \eqref{eq:opt-bound} for the optimized polynomials is an improvement over the bound \eqref{eq:cheb-simp-bound} for the fourth-kind Chebyshev iteration by the factor $\frac{\pi^2}{12} \approx 0.822$, i.e., a reduction in the contraction factor bound by about 18\%.

\begin{figure}[ptb]
	\centering
	\subfloat[][$p_k(\lambda)$]{
		\label{fig:optp}
		\begin{tikzpicture}
			\begin{axis}[
				xlabel={$\lambda$},
				%ylabel=$p_k(\lambda)$,
				ymin=-0.6, ymax=1.1,
				xmin=0, xmax=1,
				grid = both,
				%axis x line = middle,
				cycle multi list={Dark2},
				trig format plots=rad,
				width=.49\linewidth,
				every axis plot/.append style={thick}
				]
				\pgfplotsset{cycle list shift=1}
				\pgfplotsinvokeforeach{1,...,6}{
					\addplot table[x=x,y=p#1] {optp.dat};
				}
			\end{axis}
		\end{tikzpicture}
	}
	\subfloat[][$\lambda^{1/2} p_k(\lambda) / \sqrt{1 - p_k(\lambda)^2}$]{
		\label{fig:opt-wp}
		\begin{tikzpicture}
			\begin{axis}[
				xlabel={$\lambda$},
				%ylabel=$p_k(\lambda)$,
				ymin=-0.65, ymax=0.65, ytick distance=0.2,
				xmin=0, xmax=1,
				grid = both,
				%axis x line = middle,
				cycle multi list={Dark2},
				trig format plots=rad,
				width=.49\linewidth,
				every axis plot/.append style={thick}
				]
				\pgfplotsset{cycle list shift=1}
				\pgfplotsinvokeforeach{1,...,6}{
					\addplot table[x=x,y=p#1] {optpw.dat};
				}
			\end{axis}
		\end{tikzpicture}
	}
	
	\caption{The optimal polynomials $p_k$ for the bound of \cref{lem:multilevel-poly-bound}, (a)~unweighted, (b)~weighted by $\sqrt{\lambda/(1-p_k(\lambda)^2)}$. }
	\label{fig:opt-p-wp}
\end{figure}

\begin{table}[htb]
	\centering
	\begin{tabular}{rr@{.}lr@{.}lr@{.}lr@{.}l} \toprule
		$k$ & 
		\multicolumn{2}{c}{$\gamma_{\text{opt}}^{-1}$} &
		\multicolumn{2}{c}{$\frac{4}{\pi^2}(2k+1)^2 - \frac{2}{3}$} &
		\multicolumn{2}{c}{Difference} &
		\multicolumn{2}{c}{$\frac{\pi^2}{60}(2k+1)^{-2}$} \\ \midrule
		1 & 3 &     & 2&981   &  1&$91 \times 10^{-2}$ & 1&$83 \times 10^{-2}$\\
		2 & 9&4721  & 9&465   &  6&$68 \times 10^{-3}$ & 6&$58 \times 10^{-3}$\\
		3 & 19&1957 & 19&1923 &  3&$38\times 10^{-3}$ & 3&$36 \times 10^{-3}$\\
		4 & 32&1634 & 32&1614 &  2&$04\times 10^{-3}$ & 2&$03 \times 10^{-3}$\\
		5 & 48&3742 & 48&3728 &  1&$36\times 10^{-3}$ & 1&$36 \times 10^{-3}$ \\
		10 & 178&0643 & 178&0639 &  3&$73\times 10^{-4}$ & 3&$73 \times 10^{-4}$ \\
		100 & 16373&241899 & 16373&241895 &  4&$07\times 10^{-6}$ & 4&$07 \times 10^{-6}$ \\
		1000 & 1&$623\times 10^{6}$ & 1&$623\times 10^{6}$ &  4&$11\times 10^{-8}$ & 4&$11 \times 10^{-8}$ \\
		\bottomrule \end{tabular}
	\caption{Optimal $\gamma^{-1}$ for \cref{lem:multilevel-poly-bound}.} \label{tbl:gamma-inv-opt}
\end{table}

\section{Implementation Issues}

The iteration \eqref{eq:cheb-iter} is the natural implementation for a smoother using the Chebyshev polynomials of the fourth kind, derived from the three-term recurrence they satisfy. The numerically determined optimal polynomials for the V-cycle bound of \cref{lem:multilevel-poly-bound} do not satisfy a three-term recurrence, and so require a different implementation.

One obvious approach is to translate the factorization into monomials involving the roots $r_i$,
\[ p_k(\lambda) = \prod_{i=1}^k \left(1 - \frac{\lambda}{r_i}\right) \, ,\]
directly into the iteration
\[ x_{i+1} = x_i + \frac{1}{r_i} B (b - A x_i) . \]
While probably fine for small $k$, a concern of this approach is that the polynomials corresponding to the intermediate steps are poorly behaved. Depending on the chosen ordering of roots, the iteration may first dampen some error modes by several orders of magnitudes before then amplifying them by several orders of magnitude, or vice versa.

Instead, we propose using an iteration based on the expansion in the basis of fourth-kind Chebyshev polynomials
\[p_k(\lambda) = \sum_{i=0}^k \alpha_i W_i(1 - 2\lambda), \]
exploiting the fact that we expect the orthogonal (w.r.t.\ a weight function) Chebyshev polynomials of the fourth kind $W_i$ to be a good basis for representing the optimal $p_k$ and the fact that we have a nice iteration, namely \eqref{eq:cheb-iter}, for computing the basis $W_i$. The Gaussian quadrature rule associated with the polynomials $W_i$ can be used to determine the coefficients $\alpha_i$, requiring only that $p_k$ be evaluated at the quadrature nodes (see \cref{sec:app-opt-poly} for details). We propose then the iteration
\begin{equation}\label{eq:opt-iter} \begin{aligned}
		z_0 &= 0, \quad r_0 = b - A x_0, \\
		z_i &= \frac{2i-3}{2i+1} z_{i-1} + \frac{8i-4}{2i+1} \frac{1}{\rho(BA)} B r_{i-1}, \\
		x_i &= x_{i-1} + \beta_i z_i, \quad r_i = r_{i-1} - A z_i.
\end{aligned}\end{equation}
This is equivalent to \eqref{eq:cheb-iter} when we take $\beta_i = 1$. Note that $\beta_i$ does not appear in the ``residual'' update, so that it is \emph{not} the case that $r_i = b - A x_i$. Instead, $r_i$ tracks the residual of the unmodified fourth-kind Chebyshev iteration, and the vectors $z_i$ are identical in both iterations. From the properties of \eqref{eq:cheb-iter}, we know that $z_i = [\tfrac{1}{2i-1}W_{i-1}(BA) - \tfrac{1}{2i+1}W_i(BA)] e_0$ where $e_0 = A^{-1} b - x_0$. It follows that the polynomial $p_k(\lambda)$ corresponding to the iteration \eqref{eq:opt-iter}, i.e., so that $A^{-1} b - x_k = p_k(BA) e_0$, is given by
\[p_k(\lambda) = \sum_{i=0}^k \frac{\beta_i-\beta_{i+1}}{2i+1} W_i(1-2\lambda) \]
provided we let $\beta_0 = 1$ and $\beta_{k+1} = 0$. Hence, the coefficients $\beta_i$ can be found from the coefficients $\alpha_i$ via the recursion
\[ \beta_{i+1} = \beta_i - (2i+1)\alpha_i . \]
For the polynomials that optimize the multilevel bound \eqref{lem:multilevel-poly-bound}, the resulting $\beta_i$ (which depend on $k$) are in the interval $[1, 1.6)$, for $k$ up to several hundred. We can thus think of the optimized iteration \eqref{eq:opt-iter} as an ``over-relaxed'' version of the fourth-kind Chebyshev iteration \eqref{eq:cheb-iter}. Explicit Matlab code for computing the coefficients $\beta_i$ is given in \cref{sec:app-opt-poly}.

\section{Numerical Results}

We examine the effectiveness of the considered polynomial accelerations of smoothers by looking at three numerical examples, all variants of geometric multigrid applied to a 2D Poisson problem discretized by the finite element method (FEM). In each case, the problem to be solved consists of a $1024 \times 1024$ grid of rectangular bilinear finite elements, and we consider diagonal single-step smoothers (e.g., damped Jacobi). We use simple bilinear interpolation in every case and use the Galerkin construction for the coarse problems (which is identical to the direct discretization of the coarse problem). For the first example, we take the grid spacing to be uniform, the diffusion coefficient to be constant, while varying the apsect ratio $\Delta y/\Delta x$, which has the effect of varying the approximation constant $C$. For our second and third examples, we group the elements in to a grid of square ``macroelements'' each consisting of $N$ by $N$ finite elements, resulting in $1024/N$ macroelements in each direction. In the second example, we consider coefficient jumps, alternating the diffusion coefficients of the macroelements in a checkerboard pattern. For our third example, we consider instead nonuniform mesh spacing, taking the nodes within each macroelement to be given by an affine mapping of the Chebyshev nodes. This last example is closely related to low-order preconditioners for high-order elements (considered, e.g., in \cite{heys_algebraic_2005}). For all examples, we will primarily be looking at the asymptotic symmetric V-cycle error contraction factor $\norm{E}_A^2$, measured by applying the symmetric V-cycle in a generalized Lanczos iteration \cite{van_der_vorst_generalized_1982}.

\subsection{Uniform grid with varying aspect ratio}

\begin{figure}[ptb]
	\centering
	\begin{tikzpicture}
		\begin{groupplot}[%
			group style={%
				group size=2 by 2,
				vertical sep=2cm
			},
			ymax=1,
			width=.5\linewidth
			]
			\nextgroupplot[title={$\tfrac{\Delta y}{\Delta x} = 1, C\approx 2$},ymode=log,ylabel={$\norm{E}_A^2$},
			legend style={font=\tiny,at={($(0,0)+(1cm,1cm)$)}, legend columns=4,fill=none,draw=black,anchor=center,align=center},
			legend to name=legend1
			]
			\pgfmathsetmacro{\aspect}{1}
			
			\addlegendimage{mark=o,Dark2-C}; \addlegendentry{simple, $\omega=4/3$};
			\addlegendimage{mark=o,Dark2-D}; \addlegendentry{simple, $\omega=3/2$};
			
			\addlegendimage{mark=+,Dark2-A,style=thick,mark options={thick}}; \addlegendentry{cheb4};
			\addlegendimage{mark=triangle*,Dark2-B,style=thick}; \addlegendentry{opt};
			
			\addplot [mark=o,Dark2-C] table[x=k,y=w43,col sep=comma] {data-10-\aspect.dat};
			\addplot [mark=o,Dark2-D] table[x=k,y=w32,col sep=comma] {data-10-\aspect.dat};
			
			\addplot [mark=+,Dark2-A,style=thick,mark options={thick}] table[x=k,y=cheb4,col sep=comma] {data-10-\aspect.dat};
			\addplot [mark=triangle*,Dark2-B,style=thick] table[x=k,y=opt,col sep=comma] {data-10-\aspect.dat};
			
			\addplot [style=dashed,Dark2-C, domain=1:10, samples=101,] {2*\aspect^2/(2*\aspect^2 + 2*(4/3)*x)};
			\addplot [style=dashed,Dark2-D, domain=1:10, samples=101,] {2*\aspect^2/(2*\aspect^2 + 2*(3/2)*x)};
			\addplot [style=dashed,Dark2-A, domain=1:10, samples=101,] {2*\aspect^2/(2*\aspect^2 + (4/3)*x*(x+1))};
			\addplot [style=dashed,Dark2-B, domain=1:10, samples=101,] {2*\aspect^2/(2*\aspect^2 + ((2/3.1415926535897932384626)*(2*x+1))^2 - 2/3 )};
			
			% (Relative) Coordinate at top of the first plot
			\coordinate (c1) at (rel axis cs:0,1);
			
			\nextgroupplot[title={$\tfrac{\Delta y}{\Delta x} = 2, C\approx 8$}, ymode=log,yticklabel pos=right]
			\pgfmathsetmacro{\aspect}{2}
			\addplot [mark=o,Dark2-C] table[x=k,y=w43,col sep=comma] {data-10-\aspect.dat};
			\addplot [mark=o,Dark2-D] table[x=k,y=w32,col sep=comma] {data-10-\aspect.dat};
			\addplot [mark=+,Dark2-A,style=thick,mark options={thick}] table[x=k,y=cheb4,col sep=comma] {data-10-\aspect.dat};
			\addplot [mark=triangle*,Dark2-B,style=thick] table[x=k,y=opt,col sep=comma] {data-10-\aspect.dat};
			
			\addplot [style=dashed,Dark2-C, domain=1:10, samples=101,] {2*\aspect^2/(2*\aspect^2 + 2*(4/3)*x)};
			\addplot [style=dashed,Dark2-D, domain=1:10, samples=101,] {2*\aspect^2/(2*\aspect^2 + 2*(3/2)*x)};
			\addplot [style=dashed,Dark2-A, domain=1:10, samples=101,] {2*\aspect^2/(2*\aspect^2 + (4/3)*x*(x+1))};
			\addplot [style=dashed,Dark2-B, domain=1:10, samples=101,] {2*\aspect^2/(2*\aspect^2 + ((2/3.1415926535897932384626)*(2*x+1))^2 - 2/3 )};
			
			% (Relative) Coordinate at top of the second plot
			\coordinate (c2) at (rel axis cs:1,1);
			
			\nextgroupplot[title={$\tfrac{\Delta y}{\Delta x} = 4, C\approx 32$}, ymode=log,ylabel={$\norm{E}_A^2$},xlabel=$k$]
			\pgfmathsetmacro{\aspect}{4}
			\addplot [mark=o,Dark2-C] table[x=k,y=w43,col sep=comma] {data-10-\aspect.dat};
			\addplot [mark=o,Dark2-D] table[x=k,y=w32,col sep=comma] {data-10-\aspect.dat};
			\addplot [mark=+,Dark2-A,style=thick,mark options={thick}] table[x=k,y=cheb4,col sep=comma] {data-10-\aspect.dat};
			\addplot [mark=triangle*,Dark2-B,style=thick] table[x=k,y=opt,col sep=comma] {data-10-\aspect.dat};
			
			\addplot [style=dashed,Dark2-C, domain=1:10, samples=101,] {2*\aspect^2/(2*\aspect^2 + 2*(4/3)*x)};
			\addplot [style=dashed,Dark2-D, domain=1:10, samples=101,] {2*\aspect^2/(2*\aspect^2 + 2*(3/2)*x)};
			\addplot [style=dashed,Dark2-A, domain=1:10, samples=101,] {2*\aspect^2/(2*\aspect^2 + (4/3)*x*(x+1))};
			\addplot [style=dashed,Dark2-B, domain=1:10, samples=101,] {2*\aspect^2/(2*\aspect^2 + ((2/3.1415926535897932384626)*(2*x+1))^2 - 2/3 )};
			
			\nextgroupplot[title={$\tfrac{\Delta y}{\Delta x} = 8, C\approx 128$}, ymode=log,xlabel=$k$,yticklabel pos=right,ymin=.1]
			\pgfmathsetmacro{\aspect}{8}
			\addplot [mark=o,Dark2-C] table[x=k,y=w43,col sep=comma] {data-10-\aspect.dat};
			\addplot [mark=o,Dark2-D] table[x=k,y=w32,col sep=comma] {data-10-\aspect.dat};
			\addplot [mark=+,Dark2-A,style=thick,mark options={thick}] table[x=k,y=cheb4,col sep=comma] {data-10-\aspect.dat};
			\addplot [mark=triangle*,Dark2-B,style=thick] table[x=k,y=opt,col sep=comma] {data-10-\aspect.dat};
			
			\addplot [style=dashed,Dark2-C, domain=1:10, samples=101,] {2*\aspect^2/(2*\aspect^2 + 2*(4/3)*x)};
			\addplot [style=dashed,Dark2-D, domain=1:10, samples=101,] {2*\aspect^2/(2*\aspect^2 + 2*(3/2)*x)};
			\addplot [style=dashed,Dark2-A, domain=1:10, samples=101,] {2*\aspect^2/(2*\aspect^2 + (4/3)*x*(x+1))};
			\addplot [style=dashed,Dark2-B, domain=1:10, samples=101,] {2*\aspect^2/(2*\aspect^2 + ((2/3.1415926535897932384626)*(2*x+1))^2 - 2/3 )};
			
		\end{groupplot}
		\coordinate (c3) at ($(c1)!.5!(c2)$);
		\node[below] at (c3 |- current bounding box.south)
		{\pgfplotslegendfromname{legend1}};	
		
	\end{tikzpicture}
	
	\caption{V-Cycle error contraction factors for Poisson discretized on a uniform grid.}
	\label{fig:nr-simp1}
\end{figure}
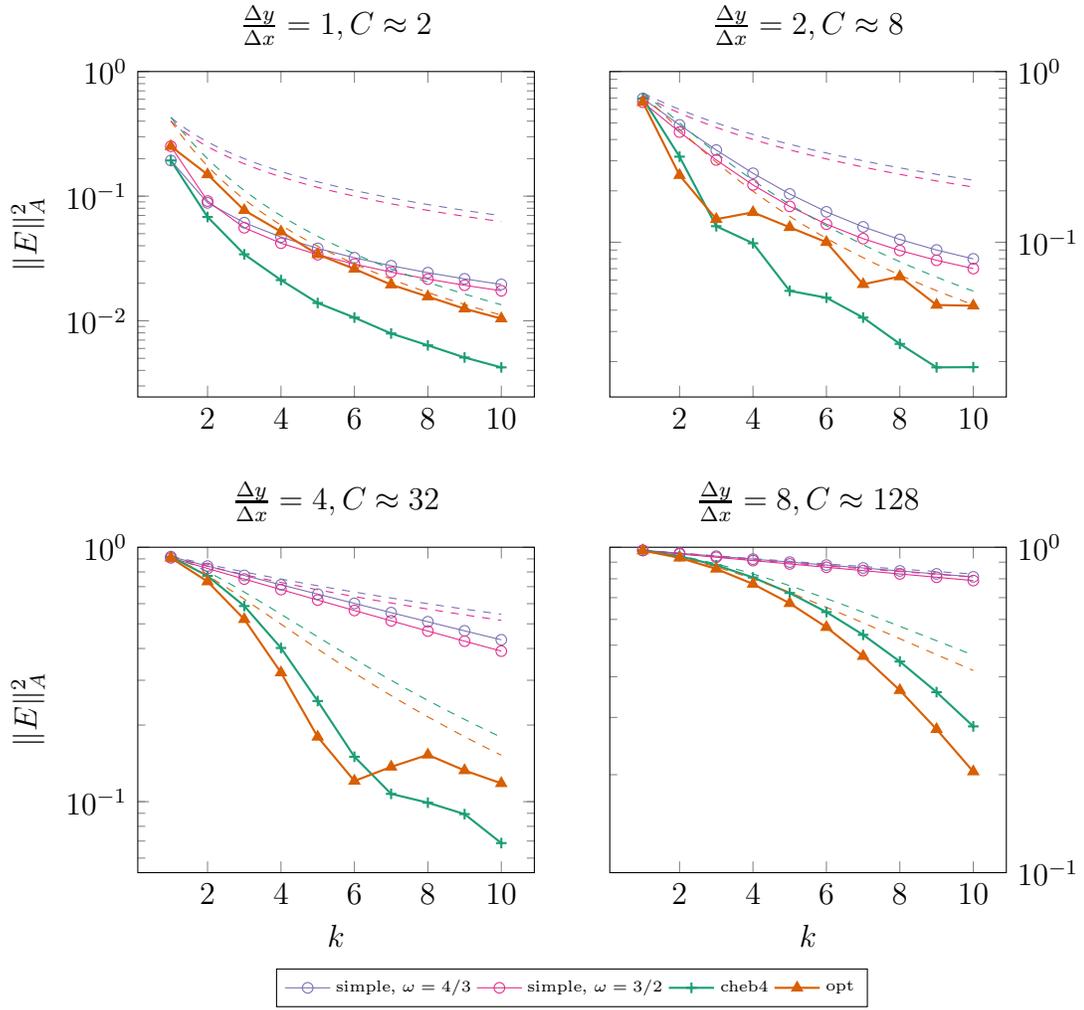

We first consider the case of a uniform mesh with constant diffusion coefficient and varying aspect ratio, with Dirichlet boundary conditions on each boundary.  In practice, the degradation of performance associated with high aspect ratios would probably be better addressed with a different multigrid strategy such as semi-coarsening. Here we use the aspect ratio to control the approximation property constant $C$, for a problem amenable to relatively simple analysis.

\Cref{fig:nr-simp1} illustrates the performance of the fourth-kind Chebyshev iteration ``cheb4'' \eqref{eq:cheb-iter}, the optimized iteration ``opt'', and the simple iteration with damping parameters $\omega=4/3$ and $\omega=3/2$ as smoothers in a symmetric multigrid V-cycle. The damping parameters for the simple iterations were chosen so that they coincide with the two polynomial smoothers for $k=1$ smoothing step. We use (rescaled) Jacobi as the single-step smoother, which in this case is the same as Richardson as the diagonal of $A$ is constant. The approximation constant $C$ for this problem asymptotes to twice the element aspect ratio squared as the grid size increases (as can be seen from a Fourier analysis), and the four subfigures correspond to aspect ratios of 1, 2, 4, and 8, and thus to $C\approx2,8,32,128$ on the first level. (The values of $C$ taper off on the coarser levels.) The solid lines with markers are the measured (via Lanczos) V-cycle contraction factors for different numbers of smoothing steps $k$, taken to be the same on each level. For each solid line, the dashed line of the same color is the bound derived from \cref{lem:multilevel-poly-bound}, i.e., \cref{cor:simp-bound} for the simple iteration, \cref{cor:cheb-bound} for the Chebyshev iteration, and \eqref{eq:opt-bound} for the optimized iteration.

 In all cases, the fourth-kind Chebyshev iteration significantly out-performs the simple iterations. While there is a significant gap between the measured contraction factors and the corresponding bounds, apart from the optimized iteration the measured contraction factors and bounds qualitatively follow the same trends, for the most part. The optimized iteration performs worse than even the simple iteration for the ideal case of aspect ratio $1$ for $k<5$. However, it does deliver the expected improvement over the fourth-kind Chebyshev iteration as suggested by the improved bound, for $C$ large enough and $k$ small enough.  While the performance of the optimized iteration can be relatively poor for small $C$ and/or large $k$, we observe that it approaches the bound derived from \cref{lem:multilevel-poly-bound} quite closely at some points, showing that the bound can be rather sharp in some cases.
 
  The non-monotonic behavior of some of the curves as $k$ increases is perhaps surprising. As we shall see, this is ultimately due to the fact that the polynomials $p_k(\lambda)$ are not themselves monotonic in $k$ for fixed $\lambda$ (apart from the simple iterations), together with the fact that the convergence rate is more sensitive to the values of the polynomial at some eigenvalues than at others.

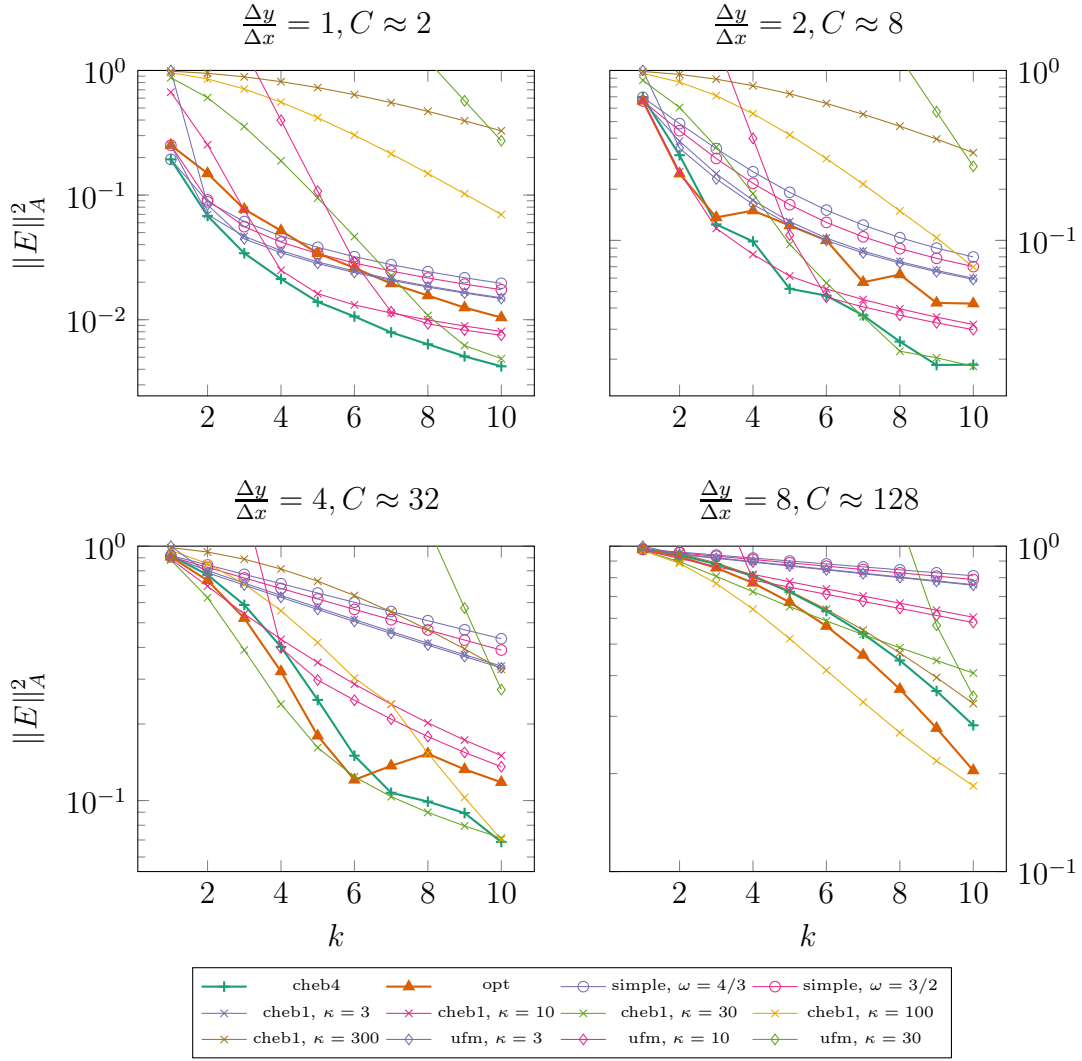
\begin{figure}[ptb]
	\centering
	\begin{tikzpicture}
		\begin{groupplot}[%
			group style={%
				group size=2 by 2,
				vertical sep=2cm
			},
			ymax=1,
			width=.5\linewidth
			]
			\nextgroupplot[title={$\tfrac{\Delta y}{\Delta x} = 1, C\approx 2$},ymode=log,ylabel={$\norm{E}_A^2$},
			legend style={font=\tiny,at={($(0,0)+(1cm,1cm)$)}, legend columns=4,fill=none,draw=black,anchor=center,align=center},
			legend to name=leg
			]
			\pgfmathsetmacro{\aspect}{1}
			
			\addlegendimage{mark=+,Dark2-A,style=thick,mark options={thick}}; \addlegendentry{cheb4};
			\addlegendimage{mark=triangle*,Dark2-B,style=thick}; \addlegendentry{opt};
			
			\addlegendimage{mark=o,Dark2-C}; \addlegendentry{simple, $\omega=4/3$};
			\addlegendimage{mark=o,Dark2-D}; \addlegendentry{simple, $\omega=3/2$};
			
			\addlegendimage{mark=x,Dark2-C}; \addlegendentry{cheb1, $\kappa=3$};
			\addlegendimage{mark=x,Dark2-D}; \addlegendentry{cheb1, $\kappa=10$};
			\addlegendimage{mark=x,Dark2-E}; \addlegendentry{cheb1, $\kappa=30$};
			\addlegendimage{mark=x,Dark2-F}; \addlegendentry{cheb1, $\kappa=100$};
			\addlegendimage{mark=x,Dark2-G}; \addlegendentry{cheb1, $\kappa=300$};
			
			\addlegendimage{mark=diamond,Dark2-C}; \addlegendentry{ufm, $\kappa=3$};
			\addlegendimage{mark=diamond,Dark2-D}; \addlegendentry{ufm, $\kappa=10$};
			\addlegendimage{mark=diamond,Dark2-E}; \addlegendentry{ufm, $\kappa=30$};
			
			\addplot [mark=o,Dark2-C] table[x=k,y=w43,col sep=comma] {data-10-\aspect.dat};
			\addplot [mark=o,Dark2-D] table[x=k,y=w32,col sep=comma] {data-10-\aspect.dat};
			
			\addplot [mark=+,Dark2-A,style=thick,mark options={thick}] table[x=k,y=cheb4,col sep=comma] {data-10-\aspect.dat};
			\addplot [mark=triangle*,Dark2-B,style=thick] table[x=k,y=opt,col sep=comma] {data-10-\aspect.dat};
			\addplot [mark=x,Dark2-C] table[x=k,y=cheb1_3,col sep=comma] {data-10-\aspect.dat};
			\addplot [mark=x,Dark2-D] table[x=k,y=cheb1_10,col sep=comma] {data-10-\aspect.dat};
			\addplot [mark=x,Dark2-E] table[x=k,y=cheb1_30,col sep=comma] {data-10-\aspect.dat};
			\addplot [mark=x,Dark2-F] table[x=k,y=cheb1_100,col sep=comma] {data-10-\aspect.dat};
			\addplot [mark=x,Dark2-G] table[x=k,y=cheb1_300,col sep=comma] {data-10-\aspect.dat};
			
			\addplot [mark=diamond,Dark2-C] table[x=k,y=ufm_3,col sep=comma] {data-10-\aspect.dat};
			\addplot [mark=diamond,Dark2-D] table[x=k,y=ufm_10,col sep=comma] {data-10-\aspect.dat};
			\addplot [mark=diamond,Dark2-E] table[x=k,y=ufm_30,col sep=comma] {data-10-\aspect.dat};
			
			% (Relative) Coordinate at top of the first plot
			\coordinate (c1) at (rel axis cs:0,1);
			
			\nextgroupplot[title={$\tfrac{\Delta y}{\Delta x} = 2, C\approx 8$}, ymode=log,yticklabel pos=right]
			\pgfmathsetmacro{\aspect}{2}
			\addplot [mark=o,Dark2-C] table[x=k,y=w43,col sep=comma] {data-10-\aspect.dat};
			\addplot [mark=o,Dark2-D] table[x=k,y=w32,col sep=comma] {data-10-\aspect.dat};
			
			\addplot [mark=+,Dark2-A,style=thick,mark options={thick}] table[x=k,y=cheb4,col sep=comma] {data-10-\aspect.dat};
			\addplot [mark=triangle*,Dark2-B,style=thick] table[x=k,y=opt,col sep=comma] {data-10-\aspect.dat};
			\addplot [mark=x,Dark2-C] table[x=k,y=cheb1_3,col sep=comma] {data-10-\aspect.dat};
			\addplot [mark=x,Dark2-D] table[x=k,y=cheb1_10,col sep=comma] {data-10-\aspect.dat};
			\addplot [mark=x,Dark2-E] table[x=k,y=cheb1_30,col sep=comma] {data-10-\aspect.dat};
			\addplot [mark=x,Dark2-F] table[x=k,y=cheb1_100,col sep=comma] {data-10-\aspect.dat};
			\addplot [mark=x,Dark2-G] table[x=k,y=cheb1_300,col sep=comma] {data-10-\aspect.dat};
			
			\addplot [mark=diamond,Dark2-C] table[x=k,y=ufm_3,col sep=comma] {data-10-\aspect.dat};
			\addplot [mark=diamond,Dark2-D] table[x=k,y=ufm_10,col sep=comma] {data-10-\aspect.dat};
			\addplot [mark=diamond,Dark2-E] table[x=k,y=ufm_30,col sep=comma] {data-10-\aspect.dat};
			
			% (Relative) Coordinate at top of the second plot
			\coordinate (c2) at (rel axis cs:1,1);
			
			\nextgroupplot[title={$\tfrac{\Delta y}{\Delta x} = 4, C\approx 32$}, ymode=log,ylabel={$\norm{E}_A^2$},xlabel=$k$]
			\pgfmathsetmacro{\aspect}{4}
			\addplot [mark=o,Dark2-C] table[x=k,y=w43,col sep=comma] {data-10-\aspect.dat};
			\addplot [mark=o,Dark2-D] table[x=k,y=w32,col sep=comma] {data-10-\aspect.dat};
			
			\addplot [mark=+,Dark2-A,style=thick,mark options={thick}] table[x=k,y=cheb4,col sep=comma] {data-10-\aspect.dat};
			\addplot [mark=triangle*,Dark2-B,style=thick] table[x=k,y=opt,col sep=comma] {data-10-\aspect.dat};
			\addplot [mark=x,Dark2-C] table[x=k,y=cheb1_3,col sep=comma] {data-10-\aspect.dat};
			\addplot [mark=x,Dark2-D] table[x=k,y=cheb1_10,col sep=comma] {data-10-\aspect.dat};
			\addplot [mark=x,Dark2-E] table[x=k,y=cheb1_30,col sep=comma] {data-10-\aspect.dat};
			\addplot [mark=x,Dark2-F] table[x=k,y=cheb1_100,col sep=comma] {data-10-\aspect.dat};
			\addplot [mark=x,Dark2-G] table[x=k,y=cheb1_300,col sep=comma] {data-10-\aspect.dat};
			
			\addplot [mark=diamond,Dark2-C] table[x=k,y=ufm_3,col sep=comma] {data-10-\aspect.dat};
			\addplot [mark=diamond,Dark2-D] table[x=k,y=ufm_10,col sep=comma] {data-10-\aspect.dat};
			\addplot [mark=diamond,Dark2-E] table[x=k,y=ufm_30,col sep=comma] {data-10-\aspect.dat};
			
			\nextgroupplot[title={$\tfrac{\Delta y}{\Delta x} = 8, C\approx 128$}, ymode=log,xlabel=$k$,yticklabel pos=right,ymin=.1]
			\pgfmathsetmacro{\aspect}{8}
			\addplot [mark=o,Dark2-C] table[x=k,y=w43,col sep=comma] {data-10-\aspect.dat};
			\addplot [mark=o,Dark2-D] table[x=k,y=w32,col sep=comma] {data-10-\aspect.dat};
			
			\addplot [mark=+,Dark2-A,style=thick,mark options={thick}] table[x=k,y=cheb4,col sep=comma] {data-10-\aspect.dat};
			\addplot [mark=triangle*,Dark2-B,style=thick] table[x=k,y=opt,col sep=comma] {data-10-\aspect.dat};
			\addplot [mark=x,Dark2-C] table[x=k,y=cheb1_3,col sep=comma] {data-10-\aspect.dat};
			\addplot [mark=x,Dark2-D] table[x=k,y=cheb1_10,col sep=comma] {data-10-\aspect.dat};
			\addplot [mark=x,Dark2-E] table[x=k,y=cheb1_30,col sep=comma] {data-10-\aspect.dat};
			\addplot [mark=x,Dark2-F] table[x=k,y=cheb1_100,col sep=comma] {data-10-\aspect.dat};
			\addplot [mark=x,Dark2-G] table[x=k,y=cheb1_300,col sep=comma] {data-10-\aspect.dat};
			
			\addplot [mark=diamond,Dark2-C] table[x=k,y=ufm_3,col sep=comma] {data-10-\aspect.dat};
			\addplot [mark=diamond,Dark2-D] table[x=k,y=ufm_10,col sep=comma] {data-10-\aspect.dat};
			\addplot [mark=diamond,Dark2-E] table[x=k,y=ufm_30,col sep=comma] {data-10-\aspect.dat};
			
		\end{groupplot}
		\coordinate (c3) at ($(c1)!.5!(c2)$);
		\node[below] at (c3 |- current bounding box.south)
		{\pgfplotslegendfromname{leg}};	
		
	\end{tikzpicture}
	
	\caption{V-Cycle error contraction factors for Poisson discretized on a uniform grid, comparison of different polynomials.}
	\label{fig:nr-simp2}
\end{figure}

In \cref{fig:nr-simp2} we compare the fourth-kind Chebyshev and optimized iterations with two other polynomial methods. The first, marked ``cheb1,'' is the standard Chebyshev iteration based on polynomials of the first kind, targeting the range of eigenvalues $[\kappa^{-1}, 1]$ for $\kappa=3,10,30,100,300$ (recall that we have scaled $B$ so that $\rho(BA)=1$). The second, marked ``ufm,'' is the method suggested by Kraus, Vassilevski, and Zikatanov \cite{kraus_polynomial_2012}, which takes $p_k(x) = 1 - q_{k-1}(x) x$ where $q_{k-1}(x)$ is the best degree $k-1$ polynomial approximation to $x^{-1}$ in the uniform norm on the interval $[\kappa^{-1}, 1]$. Equivalently, $p_k$ for this method is characterized as minimizing \[\sup_{\kappa^{-1} \le \lambda \le 1} \lambda^{-1} |p_k(\lambda)| . \]
Recall that the (scaled and shifted) Chebyshev polynomials of the fourth kind are characterized as minimizing a similar supremum, but over the whole interval $[0, 1]$ and with weight $\lambda^{1/2}$ instead of $\lambda^{-1}$. The Chebyshev polynomials of the first kind minimize the same supremum but with weight $1$. The weight $\lambda^{-1}$ is somewhat counter-intuitive, in that it places even more emphasis on damping the eigenmodes associated with small eigenvalues than the Chebyshev polynomials of the first kind. It also causes the polynomial iteration by itself to be divergent unless the degree is high enough,
\[ k \ge 2 \log \frac{\sqrt{2}}{\sqrt{\kappa}+1} \bigg/ \log \frac{\sqrt{\kappa}-1}{\sqrt{\kappa}+1} . \]
For the considered parameters $\kappa=3,10,30$, we must have $k \ge 2, 4, 9$ respectively for these polynomials to give convergent iterations.

\Cref{fig:nr-simp2} illustrates that the optimal parameter $\kappa$ for both the ``cheb1'' and ``ufm'' cases depends both on the specific problem as well as the number of smoothing steps $k$. The same parameter $\kappa$ can be optimal in one circumstance and very far from optimal in another. Compare, for example, the performance of $\kappa=100$ in the case of aspect ratio $8$ to its performance when the aspect ratio is $1$. On the other hand, the parameter-free fourth kind Chebyshev and optimized polynomials, while not always optimal, are competitive or at least not very suboptimal in every case. For the ``ufm'' case, we clearly see the effect of a minimum degree required for convergence. For fixed $\kappa$, as $k$ is increased the ``ufm'' polynomials eventually perform slightly better than the ``cheb1'' polynomials.

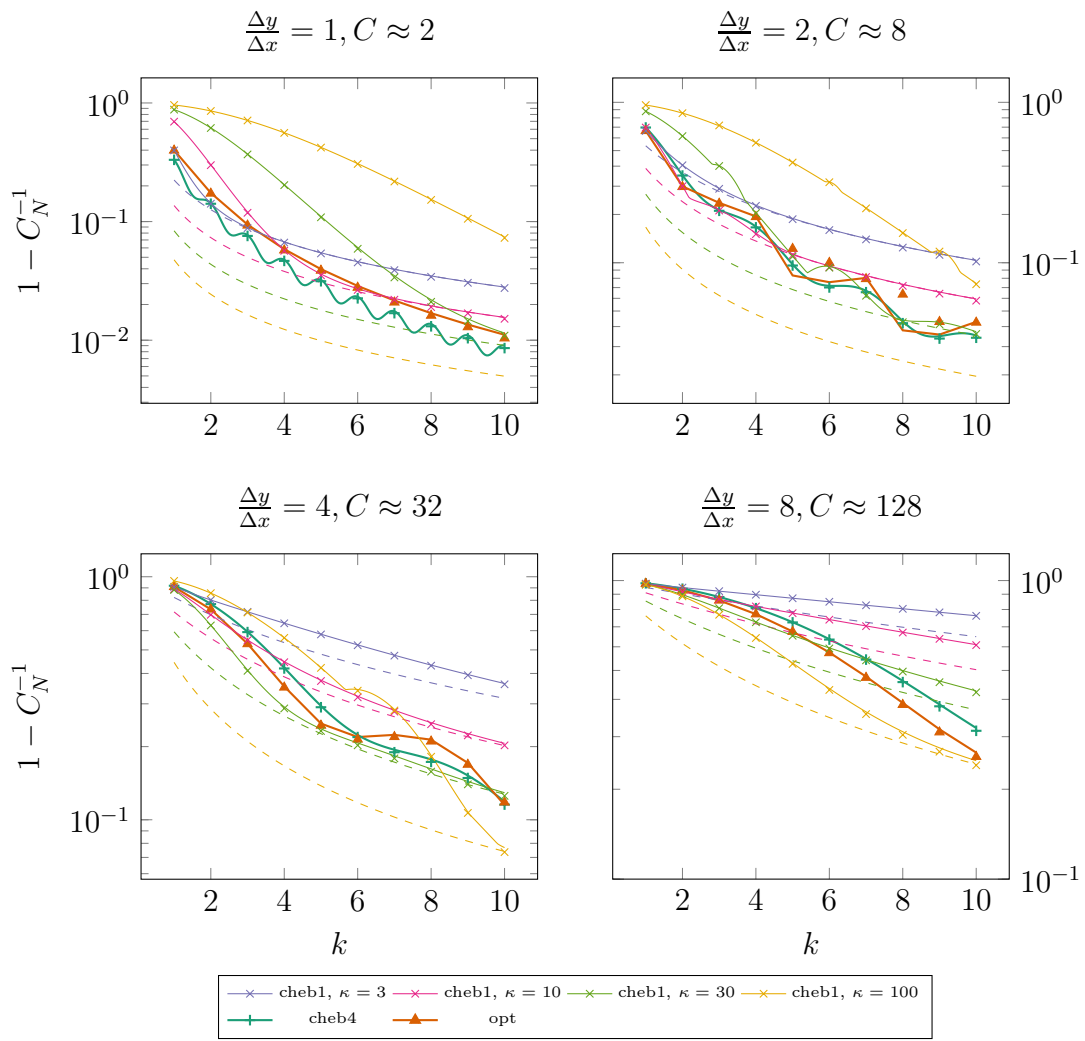
\begin{figure}[ptb]
	\centering
	\begin{tikzpicture}
		\begin{groupplot}[%
			group style={%
				group size=2 by 2,
				vertical sep=2cm
			},
			width=.5\linewidth
			]
			\nextgroupplot[title={$\tfrac{\Delta y}{\Delta x} = 1, C\approx 2$},ymode=log,ylabel={$1-C_N^{-1}$},
			legend style={font=\tiny,at={($(0,0)+(1cm,1cm)$)}, legend columns=4,fill=none,draw=black,anchor=center,align=center},
			legend to name=leg2
			]
			\pgfmathsetmacro{\aspect}{1}
			
			\addlegendimage{mark=x,Dark2-C}; \addlegendentry{cheb1, $\kappa=3$};
			\addlegendimage{mark=x,Dark2-D}; \addlegendentry{cheb1, $\kappa=10$};
			\addlegendimage{mark=x,Dark2-E}; \addlegendentry{cheb1, $\kappa=30$};
			\addlegendimage{mark=x,Dark2-F}; \addlegendentry{cheb1, $\kappa=100$};
			
			\addlegendimage{mark=+,Dark2-A,style=thick,mark options={thick}}; \addlegendentry{cheb4};
			\addlegendimage{mark=triangle*,Dark2-B,style=thick}; \addlegendentry{opt};

			\addplot [ domain=1:10, samples=201, Dark2-A, style=thick] {xdiv1px((\aspect^2)/((4/3)*x*(x+1))-xdiv1px(-pcheb(x, acos(1-2/(\aspect^2)))^2))};
			\addplot [mark=+,only marks, Dark2-A, mark options={thick}] table[x=k,y expr=1-1/\thisrow{cheb4},col sep=comma] {data-10-CN-\aspect.dat};
			
			\addplot [mark=triangle*,only marks, Dark2-B] table[x=k,y expr=1-1/\thisrow{opt},col sep=comma] {data-10-CN-\aspect.dat};
			\addplot [Dark2-B, style=thick] table[x=k,y expr=1-1/\thisrow{a\aspect},col sep=comma] {data-theory-CN-opt.dat};
			
			\addplot [ domain=1:10, samples=201,Dark2-C,dashed] {\aspect^2/(\aspect^2 + 2*x*sqrt(3))};
			\addplot [ domain=1:10, samples=201,Dark2-D,dashed] {\aspect^2/(\aspect^2 + 2*x*sqrt(10))};
			\addplot [ domain=1:10, samples=201,Dark2-E,dashed] {\aspect^2/(\aspect^2 + 2*x*sqrt(30))};
			\addplot [ domain=1:10, samples=201,Dark2-F,dashed] {\aspect^2/(\aspect^2 + 2*x*sqrt(100))};
			
			\addplot [ domain=1:10, samples=201,Dark2-C] {xdiv1px(cheb1cnm1(x,3,\aspect))};
			\addplot [ domain=1:10, samples=201,Dark2-D] {xdiv1px(cheb1cnm1(x,10,\aspect))};
			\addplot [ domain=1:10, samples=201,Dark2-E] {xdiv1px(cheb1cnm1(x,30,\aspect))};
			\addplot [ domain=1:10, samples=201,Dark2-F] {xdiv1px(cheb1cnm1(x,100,\aspect))};
			
			\addplot [mark=x, only marks, Dark2-C] table[x=k,y expr=1-1/\thisrow{cheb1_3},col sep=comma] {data-10-CN-\aspect.dat};
			\addplot [mark=x, only marks, Dark2-D] table[x=k,y expr=1-1/\thisrow{cheb1_10},col sep=comma] {data-10-CN-\aspect.dat};
			\addplot [mark=x, only marks, Dark2-E] table[x=k,y expr=1-1/\thisrow{cheb1_30},col sep=comma] {data-10-CN-\aspect.dat};
			\addplot [mark=x, only marks, Dark2-F] table[x=k,y expr=1-1/\thisrow{cheb1_100},col sep=comma] {data-10-CN-\aspect.dat};

			% (Relative) Coordinate at top of the first plot
			\coordinate (c1) at (rel axis cs:0,1);
			
			\nextgroupplot[title={$\tfrac{\Delta y}{\Delta x} = 2, C\approx 8$}, ymode=log,yticklabel pos=right]
			\pgfmathsetmacro{\aspect}{2}
			\addplot [ domain=1:10, samples=201, Dark2-A, style=thick] {xdiv1px((\aspect^2)/((4/3)*x*(x+1))-xdiv1px(-pcheb(x, acos(1-2/(\aspect^2)))^2))};
			\addplot [mark=+,only marks, Dark2-A, mark options={thick}] table[x=k,y expr=1-1/\thisrow{cheb4},col sep=comma] {data-10-CN-\aspect.dat};
			
			\addplot [mark=triangle*,only marks, Dark2-B] table[x=k,y expr=1-1/\thisrow{opt},col sep=comma] {data-10-CN-\aspect.dat};
			\addplot [Dark2-B, style=thick] table[x=k,y expr=1-1/\thisrow{a\aspect},col sep=comma] {data-theory-CN-opt.dat};
			
			\addplot [ domain=1:10, samples=201,Dark2-C,dashed] {\aspect^2/(\aspect^2 + 2*x*sqrt(3))};
			\addplot [ domain=1:10, samples=201,Dark2-D,dashed] {\aspect^2/(\aspect^2 + 2*x*sqrt(10))};
			\addplot [ domain=1:10, samples=201,Dark2-E,dashed] {\aspect^2/(\aspect^2 + 2*x*sqrt(30))};
			\addplot [ domain=1:10, samples=201,Dark2-F,dashed] {\aspect^2/(\aspect^2 + 2*x*sqrt(100))};
			
			\addplot [ domain=1:10, samples=201,Dark2-C] {xdiv1px(cheb1cnm1h(x,3,\aspect))};
			\addplot [ domain=1:10, samples=201,Dark2-D] {xdiv1px(cheb1cnm1(x,10,\aspect))};
			\addplot [ domain=1:10, samples=201,Dark2-E] {xdiv1px(cheb1cnm1(x,30,\aspect))};
			\addplot [ domain=1:10, samples=201,Dark2-F] {xdiv1px(cheb1cnm1(x,100,\aspect))};
			
			\addplot [mark=x, only marks, Dark2-C] table[x=k,y expr=1-1/\thisrow{cheb1_3},col sep=comma] {data-10-CN-\aspect.dat};
			\addplot [mark=x, only marks, Dark2-D] table[x=k,y expr=1-1/\thisrow{cheb1_10},col sep=comma] {data-10-CN-\aspect.dat};
			\addplot [mark=x, only marks, Dark2-E] table[x=k,y expr=1-1/\thisrow{cheb1_30},col sep=comma] {data-10-CN-\aspect.dat};
			\addplot [mark=x, only marks, Dark2-F] table[x=k,y expr=1-1/\thisrow{cheb1_100},col sep=comma] {data-10-CN-\aspect.dat};
			
			% (Relative) Coordinate at top of the second plot
			\coordinate (c2) at (rel axis cs:1,1);
			
			\nextgroupplot[title={$\tfrac{\Delta y}{\Delta x} = 4, C\approx 32$}, ymode=log,ylabel={$1-C_N^{-1}$},xlabel=$k$]
			\pgfmathsetmacro{\aspect}{4}
			\addplot [ domain=1:10, samples=201, Dark2-A, style=thick] {xdiv1px((\aspect^2)/((4/3)*x*(x+1))-xdiv1px(-pcheb(x, acos(1-2/(\aspect^2)))^2))};
			\addplot [mark=+,only marks, Dark2-A, mark options={thick}] table[x=k,y expr=1-1/\thisrow{cheb4},col sep=comma] {data-10-CN-\aspect.dat};
			
			\addplot [mark=triangle*,only marks, Dark2-B] table[x=k,y expr=1-1/\thisrow{opt},col sep=comma] {data-10-CN-\aspect.dat};
			\addplot [Dark2-B, style=thick] table[x=k,y expr=1-1/\thisrow{a\aspect},col sep=comma] {data-theory-CN-opt.dat};
			
			\addplot [ domain=1:10, samples=201,Dark2-C,dashed] {\aspect^2/(\aspect^2 + 2*x*sqrt(3))};
			\addplot [ domain=1:10, samples=201,Dark2-D,dashed] {\aspect^2/(\aspect^2 + 2*x*sqrt(10))};
			\addplot [ domain=1:10, samples=201,Dark2-E,dashed] {\aspect^2/(\aspect^2 + 2*x*sqrt(30))};
			\addplot [ domain=1:10, samples=201,Dark2-F,dashed] {\aspect^2/(\aspect^2 + 2*x*sqrt(100))};
			
			\addplot [ domain=1:10, samples=201,Dark2-C] {xdiv1px(cheb1cnm1h(x,3,\aspect))};
			\addplot [ domain=1:10, samples=201,Dark2-D] {xdiv1px(cheb1cnm1h(x,10,\aspect))};
			\addplot [ domain=1:10, samples=201,Dark2-E] {xdiv1px(cheb1cnm1(x,30,\aspect))};
			\addplot [ domain=1:10, samples=201,Dark2-F] {xdiv1px(cheb1cnm1(x,100,\aspect))};
			
			\addplot [mark=x, only marks, Dark2-C] table[x=k,y expr=1-1/\thisrow{cheb1_3},col sep=comma] {data-10-CN-\aspect.dat};
			\addplot [mark=x, only marks, Dark2-D] table[x=k,y expr=1-1/\thisrow{cheb1_10},col sep=comma] {data-10-CN-\aspect.dat};
			\addplot [mark=x, only marks, Dark2-E] table[x=k,y expr=1-1/\thisrow{cheb1_30},col sep=comma] {data-10-CN-\aspect.dat};
			\addplot [mark=x, only marks, Dark2-F] table[x=k,y expr=1-1/\thisrow{cheb1_100},col sep=comma] {data-10-CN-\aspect.dat};
			
			\nextgroupplot[title={$\tfrac{\Delta y}{\Delta x} = 8, C\approx 128$}, ymode=log,xlabel=$k$,yticklabel pos=right,ymin=.1]
			\pgfmathsetmacro{\aspect}{8}
			\addplot [ domain=1:10, samples=201, Dark2-A, style=thick] {xdiv1px((\aspect^2)/((4/3)*x*(x+1))-xdiv1px(-pcheb(x, acos(1-2/(\aspect^2)))^2))};
			\addplot [mark=+,only marks, Dark2-A, mark options={thick}] table[x=k,y expr=1-1/\thisrow{cheb4},col sep=comma] {data-10-CN-\aspect.dat};
			
			\addplot [mark=triangle*,only marks, Dark2-B] table[x=k,y expr=1-1/\thisrow{opt},col sep=comma] {data-10-CN-\aspect.dat};
			\addplot [Dark2-B, style=thick] table[x=k,y expr=1-1/\thisrow{a\aspect},col sep=comma] {data-theory-CN-opt.dat};
			
			\addplot [ domain=1:10, samples=201,Dark2-C,dashed] {\aspect^2/(\aspect^2 + 2*x*sqrt(3))};
			\addplot [ domain=1:10, samples=201,Dark2-D,dashed] {\aspect^2/(\aspect^2 + 2*x*sqrt(10))};
			\addplot [ domain=1:10, samples=201,Dark2-E,dashed] {\aspect^2/(\aspect^2 + 2*x*sqrt(30))};
			\addplot [ domain=1:10, samples=201,Dark2-F,dashed] {\aspect^2/(\aspect^2 + 2*x*sqrt(100))};
			
			\addplot [ domain=1:10, samples=201,Dark2-C] {xdiv1px(cheb1cnm1h(x,3,\aspect))};
			\addplot [ domain=1:10, samples=201,Dark2-D] {xdiv1px(cheb1cnm1h(x,10,\aspect))};
			\addplot [ domain=1:10, samples=201,Dark2-E] {xdiv1px(cheb1cnm1h(x,30,\aspect))};
			\addplot [ domain=1:10, samples=201,Dark2-F] {xdiv1px(cheb1cnm1(x,100,\aspect))};
			
			\addplot [mark=x, only marks, Dark2-C] table[x=k,y expr=1-1/\thisrow{cheb1_3},col sep=comma] {data-10-CN-\aspect.dat};
			\addplot [mark=x, only marks, Dark2-D] table[x=k,y expr=1-1/\thisrow{cheb1_10},col sep=comma] {data-10-CN-\aspect.dat};
			\addplot [mark=x, only marks, Dark2-E] table[x=k,y expr=1-1/\thisrow{cheb1_30},col sep=comma] {data-10-CN-\aspect.dat};
			\addplot [mark=x, only marks, Dark2-F] table[x=k,y expr=1-1/\thisrow{cheb1_100},col sep=comma] {data-10-CN-\aspect.dat};
			
		\end{groupplot}
		\coordinate (c3) at ($(c1)!.5!(c2)$);
		\node[below] at (c3 |- current bounding box.south)
		{\pgfplotslegendfromname{leg2}};	
		
	\end{tikzpicture}
	
	\caption{McCormick V-cycle bounds for Poisson discretized on a uniform grid.}
	\label{fig:nr-simp3}
\end{figure}

In \cref{fig:nr-simp3} we consider the McCormick bound $\norm{E}_A^2 \le 1 - C_N^{-1}$ of \cref{lem:McCormick-bound} for a subset of the polynomial smoothers considered in \cref{fig:nr-simp2}, in order to gain more insight into their behavior. The markers correspond to measured values of $C_N$, determined using a generalized Lanczos iteration with $N^{-1}$ implemented by transforming to the eigenbasis. Many of the general trends in \cref{fig:nr-simp2} are reflected in the upper bounds plotted in \cref{fig:nr-simp3}, though there are some discrepancies resulting from the variance in the sharpness of the bound.

At first glance the curves in \cref{fig:nr-simp3} might appear to connect the markers. They are actually plots of $1-\lambda^{-1}$ where $\lambda$ is the maximum of
\begin{equation} \label{eq:CN-fourier-analysis-eigenvalues} \begin{aligned} \lambda_1 &= 1 - \frac{\alpha^2}{2 p_k'(0)} + \frac{p_k(\alpha^{-2})^2}{1-p_k(\alpha^{-2})^2}, \text{ and} \\
		\lambda_2 &=
	1 - \frac{1}{2 p_k'(0)} + \frac{p_k(1)^2}{1-p_k(1)^2}, \end{aligned} \end{equation}
where $\alpha = \Delta y / \Delta x \ge 1$ is the aspect ratio. These are two asymptotic eigenvalues of $N^{-1} \pi_f A^{-1}$ associated with the smallest horizontal and vertical wavenumbers in the limit as the number of grid points goes to infinity, determined by a Fourier analysis. Note that $C_N$ is the spectral radius of $N^{-1} \pi_f A^{-1}$. There's no assurance that the global maximum eigenvalue of $N^{-1} \pi_f A^{-1}$ is associated with the smallest wavenumbers, but the close correspondence of the curves in \cref{fig:nr-simp3} to the markers indicates this to likely be the case in most of the instances considered. The notable exceptions are the ``opt'' polynomials at aspect ratio $\alpha=2$ and degrees $k=5,6,8,9$. Indeed a close examination of these cases reveals the global maximum eigenvalues to occur at wavenumbers far from the smallest.

To the extent that the McCormick bound reflects the actual multigrid convergence, we can explain the convergence behavior by examining $\lambda_1$ and $\lambda_2$, at least in those cases where their maximum well approximates $C_N$. First note that the terms involving $p_k'(0)$ and $p_k(1)$ behave monotonically with respect to $k$, for all the polynomial classes considered. E.g., with any parameter being fixed, $p_k(1)$ always decreases as $k$ increases. On the other hand, the term involving $p_k(\alpha^{-2})$ can exhibit oscillations with $k$. Much of the interesting and perhaps unexpected behavior can be attributed to this term. Note that we have chosen to interpolate the plots of $1-\lambda^{-1}$ to non-integer $k$ using analytic expressions where available, in order to highlight the oscillatory behavior. For example, for the Chebyshev polynomials of the fourth kind, $p_k(\alpha^{-2})$ is proportional to $\sin[(k+\tfrac{1}{2}) \arccos (1-2\alpha^{-2})]$ by \eqref{eq:W-defn}. As a result, the wavelength of the oscillations increases with the aspect ratio $\alpha$. As another example, for the ``cheb1'' case with $\kappa=100$ and aspect ratio $\alpha=2$, $\lambda$ is mostly equal to $\lambda_2$ except within two narrow intervals near $k=6$ and $k=9$ where the peaks of the oscillations of $\lambda_1$ stick out, explaining the subtle kinks in the plot of $1-C_N^{-1}$ at these locations.

For the polynomial smoothers based on the Chebyshev polynomials of the first kind, \cref{fig:nr-simp2} illustrates the convergence often improves rapidly with $k$ at first before settling into a slower asymptotic regime. This is reflected in the behavior of the McCormick upper bound in \cref{fig:nr-simp3}, which can be explained as follows. For the Chebyshev polynomials of the first kind,
\[ p_k'(0) = k \sqrt{\kappa} \tanh \left[k \arccosh \frac{\kappa+1}{\kappa-1}\right] . \]
For fixed $\kappa$, as $k$ increases, in particular when it becomes large compared to $1/\arccosh \frac{\kappa+1}{\kappa-1} \sim \sqrt{\kappa}/2$, $\tanh$ approaches $1$ exponentially fast so that $p_k'(0) \sim k\sqrt{\kappa}$ plus exponentially small terms (e.s.t.). Meanwhile $p_k(\alpha^{-2})$ and $p_k(1)$ both eventually become exponentially small as $k$ increases, so that $\lambda \sim 1 + \frac{\alpha^2}{2 k \sqrt{\kappa}}$ plus e.s.t. Hence, since $C \approx 2 \alpha^2$, for fixed $\kappa$ we expect the McCormick bound $1-C_N^{-1}$ for the Chebyshev polynomials of the first kind to eventually behave approximately like
\[ \frac{C}{C + 4 k \sqrt{\kappa}} \]
as $k$ increases. The dashed curves in \cref{fig:nr-simp3} are plots of this expression, with colors matching the corresponding plots of $\lambda^{-1}$. Many of the ``cheb1'' curves can be seen to fall into this asymptotic behavior; the others can be expected to do so at higher values of $k$. The asymptotic behavior explains two general features of the McCormick bound reflecting features of the multigrid convergence rates. For a given aspect ratio, the first-kind Chebyshev method with a higher $\kappa$ always eventually has a better McCormick bound than a lower $\kappa$, at high enough $k$, though the cut-off may be at impractically large $k$. Secondly, the fourth-kind Chebyshev and optimized polynomials always eventually have lower McCormick bounds than any given first-kind Chebyshev method with fixed $\kappa$, again for large enough $k$, since \cref{cor:cheb-bound} implies an upper bound for $1-C_N^{-1}$ for the fourth-kind and optimized polynomials that is $O(k^{-2})$.

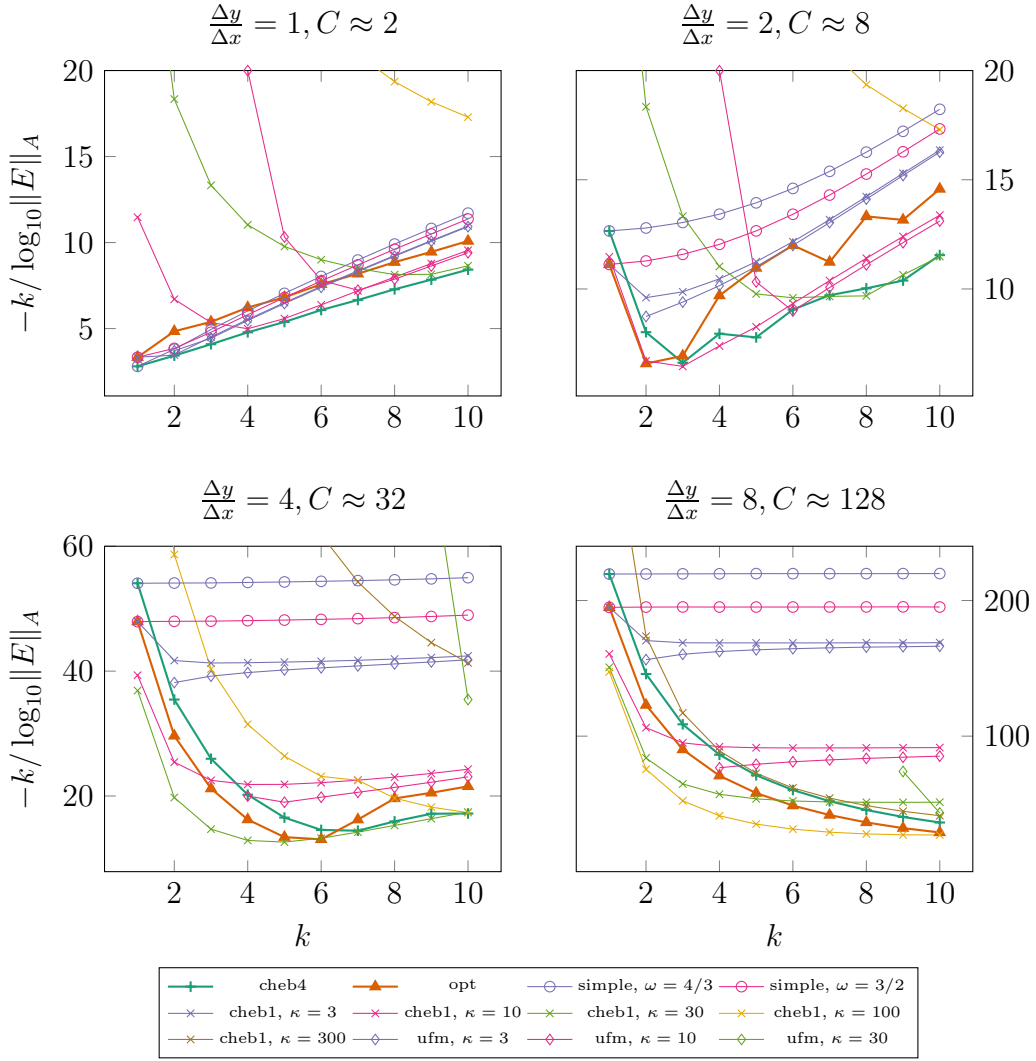
\begin{figure}[ptb]
	\centering
	\begin{tikzpicture}
		\begin{groupplot}[%
			group style={%
				group size=2 by 2,
				vertical sep=2cm
			},
			width=.5\linewidth
			]
			\nextgroupplot[title={$\tfrac{\Delta y}{\Delta x} = 1, C\approx 2$},ymax=20,ylabel={$-k/\log_{10} \norm{E}_A$},
			legend style={font=\tiny,at={($(0,0)+(1cm,1cm)$)}, legend columns=4,fill=none,draw=black,anchor=center,align=center},
			legend to name=leg3
			]
			\pgfmathsetmacro{\aspect}{1}
			
			\addlegendimage{mark=+,Dark2-A,style=thick,mark options={thick}}; \addlegendentry{cheb4};
			\addlegendimage{mark=triangle*,Dark2-B,style=thick}; \addlegendentry{opt};
			
			\addlegendimage{mark=o,Dark2-C}; \addlegendentry{simple, $\omega=4/3$};
			\addlegendimage{mark=o,Dark2-D}; \addlegendentry{simple, $\omega=3/2$};
			
			\addlegendimage{mark=x,Dark2-C}; \addlegendentry{cheb1, $\kappa=3$};
			\addlegendimage{mark=x,Dark2-D}; \addlegendentry{cheb1, $\kappa=10$};
			\addlegendimage{mark=x,Dark2-E}; \addlegendentry{cheb1, $\kappa=30$};
			\addlegendimage{mark=x,Dark2-F}; \addlegendentry{cheb1, $\kappa=100$};
			\addlegendimage{mark=x,Dark2-G}; \addlegendentry{cheb1, $\kappa=300$};
			
			\addlegendimage{mark=diamond,Dark2-C}; \addlegendentry{ufm, $\kappa=3$};
			\addlegendimage{mark=diamond,Dark2-D}; \addlegendentry{ufm, $\kappa=10$};
			\addlegendimage{mark=diamond,Dark2-E}; \addlegendentry{ufm, $\kappa=30$};
			
			\addplot [mark=+,Dark2-A,style=thick,mark options={thick}] table[x=k,y expr=-2*\thisrow{k}/log10(\thisrow{cheb4}),col sep=comma] {data-10-\aspect.dat};
			\addplot [mark=triangle*,Dark2-B,style=thick] table[x=k,y expr=-2*\thisrow{k}/log10(\thisrow{opt}),col sep=comma] {data-10-\aspect.dat};
			\addplot [mark=x,Dark2-C] table[x=k,y expr=-2*\thisrow{k}/log10(\thisrow{cheb1_3}),col sep=comma] {data-10-\aspect.dat};
			\addplot [mark=x,Dark2-D] table[x=k,y expr=-2*\thisrow{k}/log10(\thisrow{cheb1_10}),col sep=comma] {data-10-\aspect.dat};
			\addplot [mark=x,Dark2-E] table[x=k,y expr=-2*\thisrow{k}/log10(\thisrow{cheb1_30}),col sep=comma] {data-10-\aspect.dat};
			\addplot [mark=x,Dark2-F] table[x=k,y expr=-2*\thisrow{k}/log10(\thisrow{cheb1_100}),col sep=comma] {data-10-\aspect.dat};
			\addplot [mark=x,Dark2-G] table[x=k,y expr=-2*\thisrow{k}/log10(\thisrow{cheb1_300}),col sep=comma] {data-10-\aspect.dat};
			\addplot [mark=o,Dark2-C] table[x=k,y expr=-2*\thisrow{k}/log10(\thisrow{w43}),col sep=comma] {data-10-\aspect.dat};
			\addplot [mark=o,Dark2-D] table[x=k,y expr=-2*\thisrow{k}/log10(\thisrow{w32}),col sep=comma] {data-10-\aspect.dat};
			\addplot [mark=diamond,Dark2-C] table[x=k,y expr=-2*\thisrow{k}/log10(\thisrow{ufm_3}),col sep=comma,,skip coords between index={0}{1}] {data-10-\aspect.dat};
			\addplot [mark=diamond,Dark2-D] table[x=k,y expr=-2*\thisrow{k}/log10(\thisrow{ufm_10}),col sep=comma,skip coords between index={0}{3}] {data-10-\aspect.dat};
			\addplot [mark=diamond,Dark2-E] table[x=k,y expr=-2*\thisrow{k}/log10(\thisrow{ufm_30}),col sep=comma,,skip coords between index={0}{8}] {data-10-\aspect.dat};
			
			% (Relative) Coordinate at top of the first plot
			\coordinate (c1) at (rel axis cs:0,1);
			
			\nextgroupplot[title={$\tfrac{\Delta y}{\Delta x} = 2, C\approx 8$}, ymax=20,yticklabel pos=right]
			\pgfmathsetmacro{\aspect}{2}
			\addplot [mark=+,Dark2-A,style=thick,mark options={thick}] table[x=k,y expr=-2*\thisrow{k}/log10(\thisrow{cheb4}),col sep=comma] {data-10-\aspect.dat};
			\addplot [mark=triangle*,Dark2-B,style=thick] table[x=k,y expr=-2*\thisrow{k}/log10(\thisrow{opt}),col sep=comma] {data-10-\aspect.dat};
			\addplot [mark=x,Dark2-C] table[x=k,y expr=-2*\thisrow{k}/log10(\thisrow{cheb1_3}),col sep=comma] {data-10-\aspect.dat};
			\addplot [mark=x,Dark2-D] table[x=k,y expr=-2*\thisrow{k}/log10(\thisrow{cheb1_10}),col sep=comma] {data-10-\aspect.dat};
			\addplot [mark=x,Dark2-E] table[x=k,y expr=-2*\thisrow{k}/log10(\thisrow{cheb1_30}),col sep=comma] {data-10-\aspect.dat};
			\addplot [mark=x,Dark2-F] table[x=k,y expr=-2*\thisrow{k}/log10(\thisrow{cheb1_100}),col sep=comma] {data-10-\aspect.dat};
			\addplot [mark=x,Dark2-G] table[x=k,y expr=-2*\thisrow{k}/log10(\thisrow{cheb1_300}),col sep=comma] {data-10-\aspect.dat};
			\addplot [mark=o,Dark2-C] table[x=k,y expr=-2*\thisrow{k}/log10(\thisrow{w43}),col sep=comma] {data-10-\aspect.dat};
			\addplot [mark=o,Dark2-D] table[x=k,y expr=-2*\thisrow{k}/log10(\thisrow{w32}),col sep=comma] {data-10-\aspect.dat};
			\addplot [mark=diamond,Dark2-C] table[x=k,y expr=-2*\thisrow{k}/log10(\thisrow{ufm_3}),col sep=comma,skip coords between index={0}{1}] {data-10-\aspect.dat};
			\addplot [mark=diamond,Dark2-D] table[x=k,y expr=-2*\thisrow{k}/log10(\thisrow{ufm_10}),col sep=comma,skip coords between index={0}{3}] {data-10-\aspect.dat};
			\addplot [mark=diamond,Dark2-E] table[x=k,y expr=-2*\thisrow{k}/log10(\thisrow{ufm_30}),col sep=comma,,skip coords between index={0}{8}] {data-10-\aspect.dat};
			
			% (Relative) Coordinate at top of the second plot
			\coordinate (c2) at (rel axis cs:1,1);
			
			\nextgroupplot[title={$\tfrac{\Delta y}{\Delta x} = 4, C\approx 32$}, ymax=60,ylabel={$-k/\log_{10} \norm{E}_A$},xlabel=$k$]
			\pgfmathsetmacro{\aspect}{4}
			\addplot [mark=+,Dark2-A,style=thick,mark options={thick}] table[x=k,y expr=-2*\thisrow{k}/log10(\thisrow{cheb4}),col sep=comma] {data-10-\aspect.dat};
			\addplot [mark=triangle*,Dark2-B,style=thick] table[x=k,y expr=-2*\thisrow{k}/log10(\thisrow{opt}),col sep=comma] {data-10-\aspect.dat};
			\addplot [mark=x,Dark2-C] table[x=k,y expr=-2*\thisrow{k}/log10(\thisrow{cheb1_3}),col sep=comma] {data-10-\aspect.dat};
			\addplot [mark=x,Dark2-D] table[x=k,y expr=-2*\thisrow{k}/log10(\thisrow{cheb1_10}),col sep=comma] {data-10-\aspect.dat};
			\addplot [mark=x,Dark2-E] table[x=k,y expr=-2*\thisrow{k}/log10(\thisrow{cheb1_30}),col sep=comma] {data-10-\aspect.dat};
			\addplot [mark=x,Dark2-F] table[x=k,y expr=-2*\thisrow{k}/log10(\thisrow{cheb1_100}),col sep=comma] {data-10-\aspect.dat};
			\addplot [mark=x,Dark2-G] table[x=k,y expr=-2*\thisrow{k}/log10(\thisrow{cheb1_300}),col sep=comma] {data-10-\aspect.dat};
			\addplot [mark=o,Dark2-C] table[x=k,y expr=-2*\thisrow{k}/log10(\thisrow{w43}),col sep=comma] {data-10-\aspect.dat};
			\addplot [mark=o,Dark2-D] table[x=k,y expr=-2*\thisrow{k}/log10(\thisrow{w32}),col sep=comma] {data-10-\aspect.dat};
			\addplot [mark=diamond,Dark2-C] table[x=k,y expr=-2*\thisrow{k}/log10(\thisrow{ufm_3}),col sep=comma,skip coords between index={0}{1}] {data-10-\aspect.dat};
			\addplot [mark=diamond,Dark2-D] table[x=k,y expr=-2*\thisrow{k}/log10(\thisrow{ufm_10}),col sep=comma,skip coords between index={0}{3}] {data-10-\aspect.dat};
			\addplot [mark=diamond,Dark2-E] table[x=k,y expr=-2*\thisrow{k}/log10(\thisrow{ufm_30}),col sep=comma,,skip coords between index={0}{8}] {data-10-\aspect.dat};
			
			\nextgroupplot[title={$\tfrac{\Delta y}{\Delta x} = 8, C\approx 128$}, ymax=240,xlabel=$k$,yticklabel pos=right,ymin=.1]
			\pgfmathsetmacro{\aspect}{8}
			\addplot [mark=+,Dark2-A,style=thick,mark options={thick}] table[x=k,y expr=-2*\thisrow{k}/log10(\thisrow{cheb4}),col sep=comma] {data-10-\aspect.dat};
			\addplot [mark=triangle*,Dark2-B,style=thick] table[x=k,y expr=-2*\thisrow{k}/log10(\thisrow{opt}),col sep=comma] {data-10-\aspect.dat};
			\addplot [mark=x,Dark2-C] table[x=k,y expr=-2*\thisrow{k}/log10(\thisrow{cheb1_3}),col sep=comma] {data-10-\aspect.dat};
			\addplot [mark=x,Dark2-D] table[x=k,y expr=-2*\thisrow{k}/log10(\thisrow{cheb1_10}),col sep=comma] {data-10-\aspect.dat};
			\addplot [mark=x,Dark2-E] table[x=k,y expr=-2*\thisrow{k}/log10(\thisrow{cheb1_30}),col sep=comma] {data-10-\aspect.dat};
			\addplot [mark=x,Dark2-F] table[x=k,y expr=-2*\thisrow{k}/log10(\thisrow{cheb1_100}),col sep=comma] {data-10-\aspect.dat};
			\addplot [mark=x,Dark2-G] table[x=k,y expr=-2*\thisrow{k}/log10(\thisrow{cheb1_300}),col sep=comma] {data-10-\aspect.dat};
			\addplot [mark=o,Dark2-C] table[x=k,y expr=-2*\thisrow{k}/log10(\thisrow{w43}),col sep=comma] {data-10-\aspect.dat};
			\addplot [mark=o,Dark2-D] table[x=k,y expr=-2*\thisrow{k}/log10(\thisrow{w32}),col sep=comma] {data-10-\aspect.dat};
			\addplot [mark=diamond,Dark2-C] table[x=k,y expr=-2*\thisrow{k}/log10(\thisrow{ufm_3}),col sep=comma,skip coords between index={0}{1}] {data-10-\aspect.dat};
			\addplot [mark=diamond,Dark2-D] table[x=k,y expr=-2*\thisrow{k}/log10(\thisrow{ufm_10}),col sep=comma,skip coords between index={0}{3}] {data-10-\aspect.dat};
			\addplot [mark=diamond,Dark2-E] table[x=k,y expr=-2*\thisrow{k}/log10(\thisrow{ufm_30}),col sep=comma,,skip coords between index={0}{8}] {data-10-\aspect.dat};
			
		\end{groupplot}
		\coordinate (c3) at ($(c1)!.5!(c2)$);
		\node[below] at (c3 |- current bounding box.south)
		{\pgfplotslegendfromname{leg3}};	
		
	\end{tikzpicture}
	
	\caption{Smoother steps per decimal digit for Poisson discretized on a uniform grid.}
	\label{fig:nr-simp4}
\end{figure}

In \cref{fig:nr-simp4}, we consider the efficiency of the polynomial smoothers, taking into account the higher costs of using more smoother steps. Since the asymptotic convergence rate of the symmetric V-cycle is $\norm{E}_A^2$, asymptotically $-\log_{10} \norm{E}_A^2$ symmetric V-cycles are needed per decimal digit (reduction of the error by a factor of 10). Since each V-cycle requires $2k$ applications of the single-step smoother, $-k/\log_{10} \norm{E}_A$ thus gives the asymptotic number of smoother steps required per digit. We see in \cref{fig:nr-simp4} that the optimal number of smoothing steps $k$ increases as the aspect ratio and thereby the approximation constant $C$ increases. At aspect ratio 1, there's no benefit to using $k > 1$, at least in terms of total smoothing steps, whereas at aspect ratio 8, the optimum appears to be beyond even $k=10$. The simple smoothers show no benefit to $k>1$ with this metric in any case. We should note that considering only the total number of smoother steps ignores the costs of the prolongation and restriction operations, which scales with the number of V-cycles, hence further penalizing small $k$, and making the estimate of optimal $k$ here conservative. Interestingly, while the fourth-kind and optimized polynomials can be somewhat suboptimal at specific $k$, we note that their minimum values of $-k/\log_{10} \norm{E}_A$ are quite competitive for every aspect ratio.

\subsection{Coefficient jumps}

\begin{figure}[ptb]
	\centering
	\begin{tikzpicture}
		\begin{groupplot}[%
			group style={%
				group size=2 by 2,
				%vertical sep=2cm
			},
			width=.5\linewidth
			]
			\nextgroupplot[title={$a=10$},ymax=1,ymin=0,ylabel={$\norm{E}_A^2$},
			legend style={font=\tiny,at={($(0,0)+(1cm,1cm)$)}, legend columns=4,fill=none,draw=black,anchor=center,align=center},
			legend to name=leg-varcoef
			]
			
			\addlegendimage{mark=+,Dark2-A,style=thick,mark options={thick}}; \addlegendentry{cheb4};
			\addlegendimage{mark=triangle*,Dark2-B,style=thick}; \addlegendentry{opt};
			\addlegendimage{mark=o,Dark2-C}; \addlegendentry{simple, $\omega=4/3$};
			\addlegendimage{mark=o,Dark2-D}; \addlegendentry{simple, $\omega=3/2$};
			
			\addlegendimage{mark=x,Dark2-C}; \addlegendentry{cheb1, $\kappa=3$};
			\addlegendimage{mark=x,Dark2-D}; \addlegendentry{cheb1, $\kappa=10$};
			\addlegendimage{mark=x,Dark2-E}; \addlegendentry{cheb1, $\kappa=30$};
			\addlegendimage{mark=x,Dark2-F}; \addlegendentry{cheb1, $\kappa=100$};
			\addlegendimage{mark=x,Dark2-G}; \addlegendentry{cheb1, $\kappa=300$};

			\pgfmathsetmacro{\paramN}{32}
			\pgfmathsetmacro{\paramE}{32}
			\def\paramA{1e1}
			\def\paramSmoother{jacobi}
			
			\addplot [mark=+,Dark2-A,style=thick,mark options={thick}] table[x=k,y=cheb4,col sep=comma] {data-varcoef-\paramN-\paramE-\paramA-\paramSmoother.dat};
			\addplot [mark=triangle*,Dark2-B,style=thick] table[x=k,y=opt,col sep=comma] {data-varcoef-\paramN-\paramE-\paramA-\paramSmoother.dat};
			\addplot [mark=x,Dark2-C] table[x=k,y=cheb1_3,col sep=comma] {data-varcoef-\paramN-\paramE-\paramA-\paramSmoother.dat};
			\addplot [mark=x,Dark2-D] table[x=k,y=cheb1_10,col sep=comma] {data-varcoef-\paramN-\paramE-\paramA-\paramSmoother.dat};
			\addplot [mark=x,Dark2-E] table[x=k,y=cheb1_30,col sep=comma] {data-varcoef-\paramN-\paramE-\paramA-\paramSmoother.dat};
			\addplot [mark=x,Dark2-F] table[x=k,y=cheb1_100,col sep=comma] {data-varcoef-\paramN-\paramE-\paramA-\paramSmoother.dat};
			\addplot [mark=x,Dark2-G] table[x=k,y=cheb1_300,col sep=comma] {data-varcoef-\paramN-\paramE-\paramA-\paramSmoother.dat};
			
			\addplot [mark=o,Dark2-C] table[x=k,y=w43,col sep=comma] {data-varcoef-\paramN-\paramE-\paramA-\paramSmoother.dat};
			\addplot [mark=o,Dark2-D] table[x=k,y=w32,col sep=comma] {data-varcoef-\paramN-\paramE-\paramA-\paramSmoother.dat};
			
			% (Relative) Coordinate at top of the first plot
			\coordinate (c1) at (rel axis cs:0,1);
			
			\nextgroupplot[title={$a=10^8$},ymin=0,ymax=1,yticklabel pos=right]
			
			\pgfmathsetmacro{\paramN}{32}
			\pgfmathsetmacro{\paramE}{32}
			\def\paramA{1e8}
			\def\paramSmoother{jacobi}
			
			\addplot [mark=+,Dark2-A,style=thick,mark options={thick}] table[x=k,y=cheb4,col sep=comma] {data-varcoef-\paramN-\paramE-\paramA-\paramSmoother.dat};
			\addplot [mark=triangle*,Dark2-B,style=thick] table[x=k,y=opt,col sep=comma] {data-varcoef-\paramN-\paramE-\paramA-\paramSmoother.dat};
			\addplot [mark=x,Dark2-C] table[x=k,y=cheb1_3,col sep=comma] {data-varcoef-\paramN-\paramE-\paramA-\paramSmoother.dat};
			\addplot [mark=x,Dark2-D] table[x=k,y=cheb1_10,col sep=comma] {data-varcoef-\paramN-\paramE-\paramA-\paramSmoother.dat};
			\addplot [mark=x,Dark2-E] table[x=k,y=cheb1_30,col sep=comma] {data-varcoef-\paramN-\paramE-\paramA-\paramSmoother.dat};
			\addplot [mark=x,Dark2-F] table[x=k,y=cheb1_100,col sep=comma] {data-varcoef-\paramN-\paramE-\paramA-\paramSmoother.dat};
			\addplot [mark=x,Dark2-G] table[x=k,y=cheb1_300,col sep=comma] {data-varcoef-\paramN-\paramE-\paramA-\paramSmoother.dat};
			
			\addplot [mark=o,Dark2-C] table[x=k,y=w43,col sep=comma] {data-varcoef-\paramN-\paramE-\paramA-\paramSmoother.dat};
			\addplot [mark=o,Dark2-D] table[x=k,y=w32,col sep=comma] {data-varcoef-\paramN-\paramE-\paramA-\paramSmoother.dat};
			
			% (Relative) Coordinate at top of the second plot
			\coordinate (c2) at (rel axis cs:1,1);
			
			\nextgroupplot[ylabel={$-k/\log_{10} \norm{E}_A$},xlabel=$k$,ymax=60,ymin=0]
			
			\pgfmathsetmacro{\paramN}{32}
			\pgfmathsetmacro{\paramE}{32}
			\def\paramA{1e1}
			\def\paramSmoother{jacobi}
			
			\addplot [mark=+,Dark2-A,style=thick,mark options={thick}] table[x=k,y expr=-2*\thisrow{k}/log10(\thisrow{cheb4}),col sep=comma] {data-varcoef-\paramN-\paramE-\paramA-\paramSmoother.dat};
			\addplot [mark=triangle*,Dark2-B,style=thick] table[x=k,y expr=-2*\thisrow{k}/log10(\thisrow{opt}),col sep=comma] {data-varcoef-\paramN-\paramE-\paramA-\paramSmoother.dat};
			\addplot [mark=x,Dark2-C] table[x=k,y expr=-2*\thisrow{k}/log10(\thisrow{cheb1_3}),col sep=comma] {data-varcoef-\paramN-\paramE-\paramA-\paramSmoother.dat};
			\addplot [mark=x,Dark2-D] table[x=k,y expr=-2*\thisrow{k}/log10(\thisrow{cheb1_10}),col sep=comma] {data-varcoef-\paramN-\paramE-\paramA-\paramSmoother.dat};
			\addplot [mark=x,Dark2-E] table[x=k,y expr=-2*\thisrow{k}/log10(\thisrow{cheb1_30}),col sep=comma] {data-varcoef-\paramN-\paramE-\paramA-\paramSmoother.dat};
			\addplot [mark=x,Dark2-F] table[x=k,y expr=-2*\thisrow{k}/log10(\thisrow{cheb1_100}),col sep=comma] {data-varcoef-\paramN-\paramE-\paramA-\paramSmoother.dat};
			\addplot [mark=x,Dark2-G] table[x=k,y expr=-2*\thisrow{k}/log10(\thisrow{cheb1_300}),col sep=comma] {data-varcoef-\paramN-\paramE-\paramA-\paramSmoother.dat};
			
			\addplot [mark=o,Dark2-C] table[x=k,y expr=-2*\thisrow{k}/log10(\thisrow{w43}),col sep=comma] {data-varcoef-\paramN-\paramE-\paramA-\paramSmoother.dat};
			\addplot [mark=o,Dark2-D] table[x=k,y expr=-2*\thisrow{k}/log10(\thisrow{w32}),col sep=comma] {data-varcoef-\paramN-\paramE-\paramA-\paramSmoother.dat};
			
			\nextgroupplot[ymin=0,ymax=150,xlabel=$k$,yticklabel pos=right]
			
			\pgfmathsetmacro{\paramN}{32}
			\pgfmathsetmacro{\paramE}{32}
			\def\paramA{1e8}
			\def\paramSmoother{jacobi}
			
			\addplot [mark=+,Dark2-A,style=thick,mark options={thick}] table[x=k,y expr=-2*\thisrow{k}/log10(\thisrow{cheb4}),col sep=comma] {data-varcoef-\paramN-\paramE-\paramA-\paramSmoother.dat};
			\addplot [mark=triangle*,Dark2-B,style=thick] table[x=k,y expr=-2*\thisrow{k}/log10(\thisrow{opt}),col sep=comma] {data-varcoef-\paramN-\paramE-\paramA-\paramSmoother.dat};
			\addplot [mark=x,Dark2-C] table[x=k,y expr=-2*\thisrow{k}/log10(\thisrow{cheb1_3}),col sep=comma] {data-varcoef-\paramN-\paramE-\paramA-\paramSmoother.dat};
			\addplot [mark=x,Dark2-D] table[x=k,y expr=-2*\thisrow{k}/log10(\thisrow{cheb1_10}),col sep=comma] {data-varcoef-\paramN-\paramE-\paramA-\paramSmoother.dat};
			\addplot [mark=x,Dark2-E] table[x=k,y expr=-2*\thisrow{k}/log10(\thisrow{cheb1_30}),col sep=comma] {data-varcoef-\paramN-\paramE-\paramA-\paramSmoother.dat};
			\addplot [mark=x,Dark2-F] table[x=k,y expr=-2*\thisrow{k}/log10(\thisrow{cheb1_100}),col sep=comma] {data-varcoef-\paramN-\paramE-\paramA-\paramSmoother.dat};
			\addplot [mark=x,Dark2-G] table[x=k,y expr=-2*\thisrow{k}/log10(\thisrow{cheb1_300}),col sep=comma] {data-varcoef-\paramN-\paramE-\paramA-\paramSmoother.dat};
			
			\addplot [mark=o,Dark2-C] table[x=k,y expr=-2*\thisrow{k}/log10(\thisrow{w43}),col sep=comma] {data-varcoef-\paramN-\paramE-\paramA-\paramSmoother.dat};
			\addplot [mark=o,Dark2-D] table[x=k,y expr=-2*\thisrow{k}/log10(\thisrow{w32}),col sep=comma] {data-varcoef-\paramN-\paramE-\paramA-\paramSmoother.dat};
			
		\end{groupplot}
		\coordinate (c3) at ($(c1)!.5!(c2)$);
		\node[below] at (c3 |- current bounding box.south)
		{\pgfplotslegendfromname{leg-varcoef}};	
		
	\end{tikzpicture}
	
	\caption{V-Cycle error contraction factors and smoother steps per digit for the discretized Poisson problem with coefficient jumps using (rescaled) Jacobi for the single-step smoother.}
	\label{fig:nr-varcoef}
\end{figure}
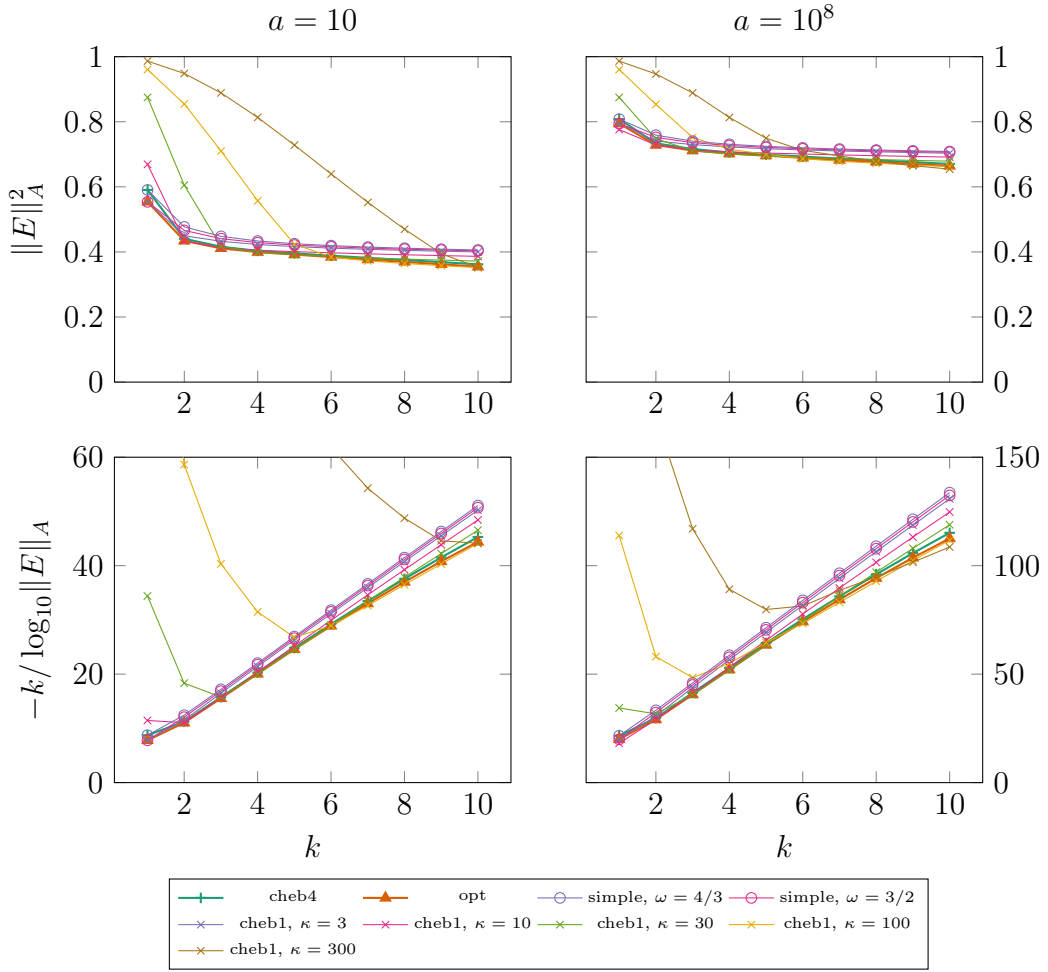

We next consider a problem involving coefficient jumps. We use the same grid as before, 1024 by 1024 bilinear square finite elements in a uniform grid. We now fix the aspect ratio to be 1, and group the elements into square macroelements each consisting of $N = 32$ by $N$ elements, resulting an array of $M=32$ by $M$ macroelements. We set the diffusion coefficient to $1$ in half of the macroelements and to a parameter $a$ in the other half, arranged in a checkerboard pattern. Note that by taking $N$ to be a power of 2, the usual coarse grids resolve all of the macroelement boundaries (the interfaces of coefficient jumps) until at some level 2 by 2 squares of macroelements are finally merged into a single element (with the Galerkin construction effectively making the diffusion coefficient in the merged element a harmonic average). As a result, as can be seen in \cref{fig:nr-varcoef}, the simple multigrid V-cycle with standard bilinear interpolation and damped Jacobi smoothing has a convergence rate that appears to be bounded independently of the problem size (number of macroelements) and the size of the coefficient jump characterized by $a$. Results for intermediate values of $a$ are similar, with convergence rates appearing to asymptote for large $a$ (making $a=10^8$ representative of the very large coefficient jump regime).

\begin{table}[htb]
	\centering
	\begin{tabular}{rrrrrr} \toprule
		& & \multicolumn{4}{c}{$C$} \\ \cmidrule(l){3-6}
		$N$ & $M$ &$a=10$ & $a=10^2$ & $a=10^4$ & $a=10^8$ \\ \midrule
		2 & 256 & 1.77e4 & 2.97e4 & 3.16e4 & 3.16e4 \\
		2 & 512 & 7.10e4 & 1.19e5 & 1.27e5 & 1.27e5 \\ \midrule
		4 & 128 & 8.65e3 & 2.21e4 & 2.51e4 & 2.51e4 \\
		4 & 256 & 3.48e4 & 8.91e4 & 1.01e5 & 1.01e5 \\ \midrule
		8 & 64 & 4.99e3 & 1.89e4 & 2.30e4 & 2.31e4 \\
		8 & 128 & 2.02e4 & 7.69e4 & 9.36e4 & 9.38e4 \\ \midrule
		16 & 32 & 2.93e3 & 1.68e4 & 2.21e4 & 2.22e4 \\
		16 & 64 & 1.20e4 & 6.89e4 & 9.10e4 & 9.13e4 \\ \midrule
		32 & 16 & 1.67e3 & 1.44e4 & 2.09e4 & 2.09e4 \\
		32 & 32 & 6.98e3 & 6.09e4 & 8.80e4 & 8.84e4 \\
		\bottomrule \end{tabular}
	\caption{Approximation constant $C$ for multigrid on the discretized Poisson problem with coefficient jumps.} \label{tbl:jump-C}
\end{table}

Strikingly, for this problem, we see very little improvement of the convergence rates when we increase the polynomial degree $k$, and $k=1$ is the most efficient in terms of minimizing the total number of smoothing steps. How does this conform, e.g., to the bound of \cref{cor:cheb-bound} for the fourth-kind Chebyshev smoother, which asymptotically behaves like $\tfrac{3}{4} C k^{-2}$? As we see in \cref{tbl:jump-C}, the approximation constant $C$ for this problem, measured using a generalized Lanczos iteration with an embedded multigrid solver, is very large and appears to be unbounded with problem size (the number of macroelements $M$ in each direction), though, interestingly, it does appear to be bounded with respect to the parameter $a$. As such, the bounds based on \cref{lem:multilevel-poly-bound} such as \cref{cor:cheb-bound} are uselessly pessimistic. On the other hand, we see that having a bounded approximation constant $C$, while not a necessary condition for bounded multigrid V-cycle convergence, is a useful sufficient condition for the effectiveness of higher degree polynomial smoothers in a V-cycle, in accordance with \cref{lem:multilevel-poly-bound}. It is possible that a different multigrid strategy for this problem (e.g., AMG) could lead to bounded constants $C$, in which case higher degree polynomial smoothers might be effective.

We note that similar results are obtained if $\ell_1$-Jacobi, explained in the next section, is used instead of Jacobi as the single-step smoother. 

\subsection{Chebyshev spaced grid}

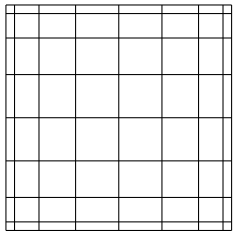
\begin{figure}[tb]
	\centering
  \begin{tikzpicture}[scale=1.5]
		\foreach \y in {-1, -0.9238795325112867, -0.7071067811865475, -0.3826834323650897, 0, 0.3826834323650898, 0.7071067811865476, 0.9238795325112867, 1} {
			\draw (-1, \y) -- (1, \y);
	    }
    
		\foreach \x in {-1, -0.9238795325112867, -0.7071067811865475, -0.3826834323650897, 0, 0.3826834323650898, 0.7071067811865476, 0.9238795325112867, 1} {
	\draw (\x, -1) -- (\x, 1);
}
  \end{tikzpicture}
\caption{$N$ by $N$ macroelement with Chebyshev grid spacing, $N=8$.} \label{fig:chebyshev-macroelement}
\end{figure}

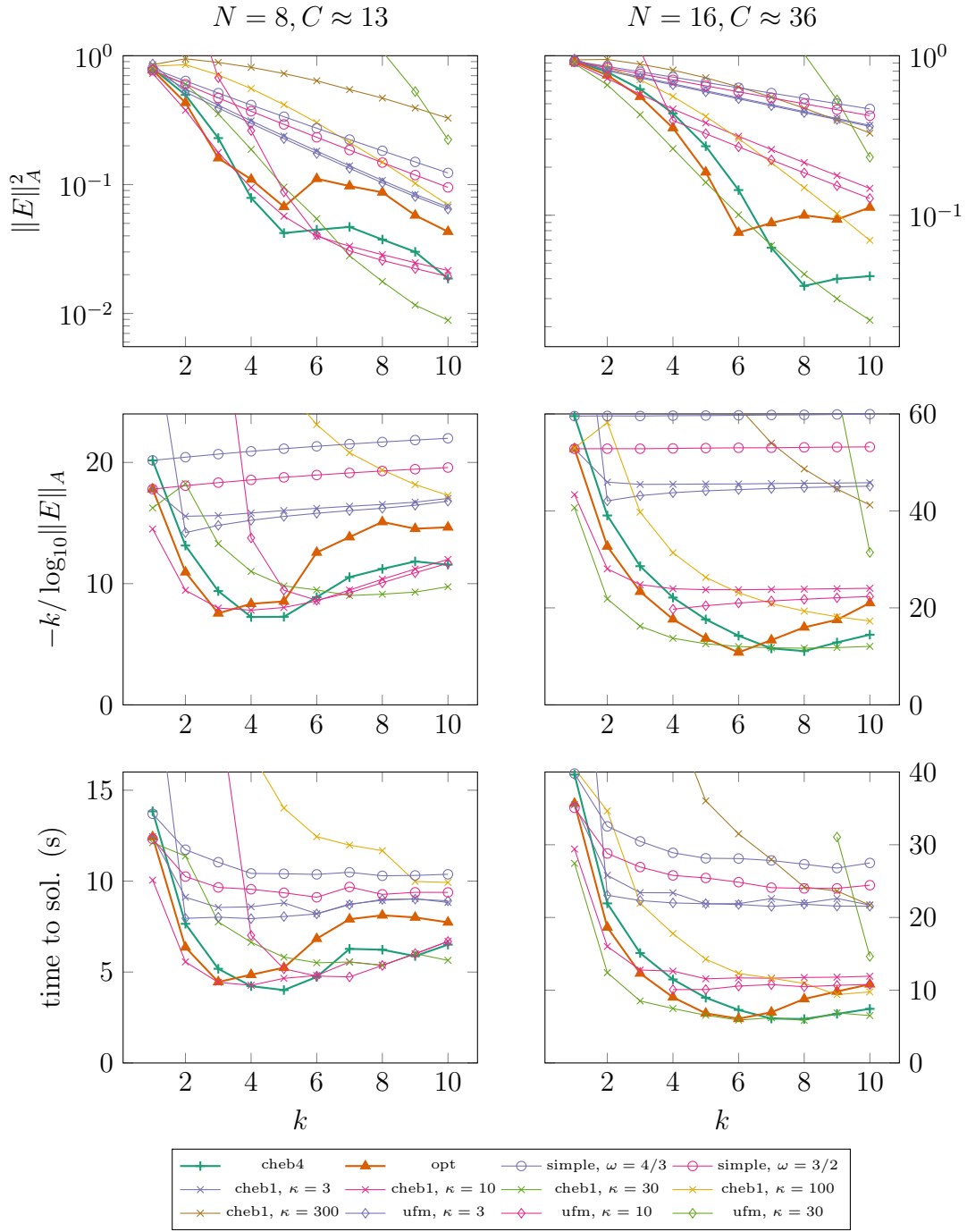
\begin{figure}[ptb]
	\centering
	\begin{tikzpicture}
		\begin{groupplot}[%
			group style={%
				group size=2 by 3,
				%vertical sep=2cm
			},
			width=.5\linewidth
			]
			\nextgroupplot[title={$N=8, C\approx 13$},ymode=log,ymax=1,ylabel={$\norm{E}_A^2$},
			legend style={font=\tiny,at={($(0,0)+(1cm,1cm)$)}, legend columns=4,fill=none,draw=black,anchor=center,align=center},
			legend to name=leg-chebgrid
			]
			\addlegendimage{mark=+,Dark2-A,style=thick,mark options={thick}}; \addlegendentry{cheb4};
			\addlegendimage{mark=triangle*,Dark2-B,style=thick}; \addlegendentry{opt};
			\addlegendimage{mark=o,Dark2-C}; \addlegendentry{simple, $\omega=4/3$};
			\addlegendimage{mark=o,Dark2-D}; \addlegendentry{simple, $\omega=3/2$};
			
			\addlegendimage{mark=x,Dark2-C}; \addlegendentry{cheb1, $\kappa=3$};
			\addlegendimage{mark=x,Dark2-D}; \addlegendentry{cheb1, $\kappa=10$};
			\addlegendimage{mark=x,Dark2-E}; \addlegendentry{cheb1, $\kappa=30$};
			\addlegendimage{mark=x,Dark2-F}; \addlegendentry{cheb1, $\kappa=100$};
			\addlegendimage{mark=x,Dark2-G}; \addlegendentry{cheb1, $\kappa=300$};
			
			\addlegendimage{mark=diamond,Dark2-C}; \addlegendentry{ufm, $\kappa=3$};
			\addlegendimage{mark=diamond,Dark2-D}; \addlegendentry{ufm, $\kappa=10$};
			\addlegendimage{mark=diamond,Dark2-E}; \addlegendentry{ufm, $\kappa=30$};

			\pgfmathsetmacro{\paramN}{8}
			\pgfmathsetmacro{\paramE}{128}
			
			\addplot [mark=+,Dark2-A,style=thick,mark options={thick}] table[x=k,y=cheb4,col sep=comma] {data-chebgrid-\paramN-\paramE-dn-l1jacobi.dat};
			\addplot [mark=triangle*,Dark2-B,style=thick] table[x=k,y=opt,col sep=comma] {data-chebgrid-\paramN-\paramE-dn-l1jacobi.dat};
			\addplot [mark=x,Dark2-C] table[x=k,y=cheb1_3,col sep=comma] {data-chebgrid-\paramN-\paramE-dn-l1jacobi.dat};
			\addplot [mark=x,Dark2-D] table[x=k,y=cheb1_10,col sep=comma] {data-chebgrid-\paramN-\paramE-dn-l1jacobi.dat};
			\addplot [mark=x,Dark2-E] table[x=k,y=cheb1_30,col sep=comma] {data-chebgrid-\paramN-\paramE-dn-l1jacobi.dat};
			\addplot [mark=x,Dark2-F] table[x=k,y=cheb1_100,col sep=comma] {data-chebgrid-\paramN-\paramE-dn-l1jacobi.dat};
			\addplot [mark=x,Dark2-G] table[x=k,y=cheb1_300,col sep=comma] {data-chebgrid-\paramN-\paramE-dn-l1jacobi.dat};
			
			\addplot [mark=diamond,Dark2-C] table[x=k,y=ufm_3,col sep=comma] {data-chebgrid-\paramN-\paramE-dn-l1jacobi.dat};
			\addplot [mark=diamond,Dark2-D] table[x=k,y=ufm_10,col sep=comma] {data-chebgrid-\paramN-\paramE-dn-l1jacobi.dat};
			\addplot [mark=diamond,Dark2-E] table[x=k,y=ufm_30,col sep=comma] {data-chebgrid-\paramN-\paramE-dn-l1jacobi.dat};
			
			\addplot [mark=o,Dark2-C] table[x=k,y=w43,col sep=comma] {data-chebgrid-\paramN-\paramE-dn-l1jacobi.dat};
			\addplot [mark=o,Dark2-D] table[x=k,y=w32,col sep=comma] {data-chebgrid-\paramN-\paramE-dn-l1jacobi.dat};
			
			% (Relative) Coordinate at top of the first plot
			\coordinate (c1) at (rel axis cs:0,1);
			
			\nextgroupplot[title={$N=16, C\approx 36$}, ymode=log,ymax=1,yticklabel pos=right]
			
			\pgfmathsetmacro{\paramN}{16}
			\pgfmathsetmacro{\paramE}{64}
			
			\addplot [mark=+,Dark2-A,style=thick,mark options={thick}] table[x=k,y=cheb4,col sep=comma] {data-chebgrid-\paramN-\paramE-dn-l1jacobi.dat};
			\addplot [mark=triangle*,Dark2-B,style=thick] table[x=k,y=opt,col sep=comma] {data-chebgrid-\paramN-\paramE-dn-l1jacobi.dat};
			\addplot [mark=x,Dark2-C] table[x=k,y=cheb1_3,col sep=comma] {data-chebgrid-\paramN-\paramE-dn-l1jacobi.dat};
			\addplot [mark=x,Dark2-D] table[x=k,y=cheb1_10,col sep=comma] {data-chebgrid-\paramN-\paramE-dn-l1jacobi.dat};
			\addplot [mark=x,Dark2-E] table[x=k,y=cheb1_30,col sep=comma] {data-chebgrid-\paramN-\paramE-dn-l1jacobi.dat};
			\addplot [mark=x,Dark2-F] table[x=k,y=cheb1_100,col sep=comma] {data-chebgrid-\paramN-\paramE-dn-l1jacobi.dat};
			\addplot [mark=x,Dark2-G] table[x=k,y=cheb1_300,col sep=comma] {data-chebgrid-\paramN-\paramE-dn-l1jacobi.dat};
			
			\addplot [mark=diamond,Dark2-C] table[x=k,y=ufm_3,col sep=comma] {data-chebgrid-\paramN-\paramE-dn-l1jacobi.dat};
			\addplot [mark=diamond,Dark2-D] table[x=k,y=ufm_10,col sep=comma] {data-chebgrid-\paramN-\paramE-dn-l1jacobi.dat};
			\addplot [mark=diamond,Dark2-E] table[x=k,y=ufm_30,col sep=comma] {data-chebgrid-\paramN-\paramE-dn-l1jacobi.dat};
			
			\addplot [mark=o,Dark2-C] table[x=k,y=w43,col sep=comma] {data-chebgrid-\paramN-\paramE-dn-l1jacobi.dat};
			\addplot [mark=o,Dark2-D] table[x=k,y=w32,col sep=comma] {data-chebgrid-\paramN-\paramE-dn-l1jacobi.dat};
			
			% (Relative) Coordinate at top of the second plot
			\coordinate (c2) at (rel axis cs:1,1);
			
			\nextgroupplot[ylabel={$-k/\log_{10} \norm{E}_A$},ymax=24,ymin=0]
			
			\pgfmathsetmacro{\paramN}{8}
			\pgfmathsetmacro{\paramE}{128}
			
			\addplot [mark=+,Dark2-A,style=thick,mark options={thick}] table[x=k,y expr=-2*\thisrow{k}/log10(\thisrow{cheb4}),col sep=comma] {data-chebgrid-\paramN-\paramE-dn-l1jacobi.dat};
			\addplot [mark=triangle*,Dark2-B,style=thick] table[x=k,y expr=-2*\thisrow{k}/log10(\thisrow{opt}),col sep=comma] {data-chebgrid-\paramN-\paramE-dn-l1jacobi.dat};
			\addplot [mark=x,Dark2-C] table[x=k,y expr=-2*\thisrow{k}/log10(\thisrow{cheb1_3}),col sep=comma] {data-chebgrid-\paramN-\paramE-dn-l1jacobi.dat};
			\addplot [mark=x,Dark2-D] table[x=k,y expr=-2*\thisrow{k}/log10(\thisrow{cheb1_10}),col sep=comma] {data-chebgrid-\paramN-\paramE-dn-l1jacobi.dat};
			\addplot [mark=x,Dark2-E] table[x=k,y expr=-2*\thisrow{k}/log10(\thisrow{cheb1_30}),col sep=comma] {data-chebgrid-\paramN-\paramE-dn-l1jacobi.dat};
			\addplot [mark=x,Dark2-F] table[x=k,y expr=-2*\thisrow{k}/log10(\thisrow{cheb1_100}),col sep=comma] {data-chebgrid-\paramN-\paramE-dn-l1jacobi.dat};
			\addplot [mark=x,Dark2-G] table[x=k,y expr=-2*\thisrow{k}/log10(\thisrow{cheb1_300}),col sep=comma] {data-chebgrid-\paramN-\paramE-dn-l1jacobi.dat};
			
			\addplot [mark=diamond,Dark2-C] table[x=k,y expr=-2*\thisrow{k}/log10(\thisrow{ufm_3}),col sep=comma] {data-chebgrid-\paramN-\paramE-dn-l1jacobi.dat};
			\addplot [mark=diamond,Dark2-D] table[x=k,y expr=-2*\thisrow{k}/log10(\thisrow{ufm_10}),col sep=comma,skip coords between index={0}{2}] {data-chebgrid-\paramN-\paramE-dn-l1jacobi.dat};
			\addplot [mark=diamond,Dark2-E] table[x=k,y expr=-2*\thisrow{k}/log10(\thisrow{ufm_30}),col sep=comma,skip coords between index={0}{8}] {data-chebgrid-\paramN-\paramE-dn-l1jacobi.dat};
			\addplot [mark=o,Dark2-C] table[x=k,y expr=-2*\thisrow{k}/log10(\thisrow{w43}),col sep=comma] {data-chebgrid-\paramN-\paramE-dn-l1jacobi.dat};
			\addplot [mark=o,Dark2-D] table[x=k,y expr=-2*\thisrow{k}/log10(\thisrow{w32}),col sep=comma] {data-chebgrid-\paramN-\paramE-dn-l1jacobi.dat};
			
			\nextgroupplot[ymin=0,ymax=60,yticklabel pos=right]
			
			\pgfmathsetmacro{\paramN}{16}
			\pgfmathsetmacro{\paramE}{64}
			
			\addplot [mark=+,Dark2-A,style=thick,mark options={thick}] table[x=k,y expr=-2*\thisrow{k}/log10(\thisrow{cheb4}),col sep=comma] {data-chebgrid-\paramN-\paramE-dn-l1jacobi.dat};
			\addplot [mark=triangle*,Dark2-B,style=thick] table[x=k,y expr=-2*\thisrow{k}/log10(\thisrow{opt}),col sep=comma] {data-chebgrid-\paramN-\paramE-dn-l1jacobi.dat};
			\addplot [mark=x,Dark2-C] table[x=k,y expr=-2*\thisrow{k}/log10(\thisrow{cheb1_3}),col sep=comma] {data-chebgrid-\paramN-\paramE-dn-l1jacobi.dat};
			\addplot [mark=x,Dark2-D] table[x=k,y expr=-2*\thisrow{k}/log10(\thisrow{cheb1_10}),col sep=comma] {data-chebgrid-\paramN-\paramE-dn-l1jacobi.dat};
			\addplot [mark=x,Dark2-E] table[x=k,y expr=-2*\thisrow{k}/log10(\thisrow{cheb1_30}),col sep=comma] {data-chebgrid-\paramN-\paramE-dn-l1jacobi.dat};
			\addplot [mark=x,Dark2-F] table[x=k,y expr=-2*\thisrow{k}/log10(\thisrow{cheb1_100}),col sep=comma] {data-chebgrid-\paramN-\paramE-dn-l1jacobi.dat};
			\addplot [mark=x,Dark2-G] table[x=k,y expr=-2*\thisrow{k}/log10(\thisrow{cheb1_300}),col sep=comma] {data-chebgrid-\paramN-\paramE-dn-l1jacobi.dat};
			
			\addplot [mark=diamond,Dark2-C] table[x=k,y expr=-2*\thisrow{k}/log10(\thisrow{ufm_3}),col sep=comma] {data-chebgrid-\paramN-\paramE-dn-l1jacobi.dat};
			\addplot [mark=diamond,Dark2-D] table[x=k,y expr=-2*\thisrow{k}/log10(\thisrow{ufm_10}),col sep=comma,skip coords between index={0}{3}] {data-chebgrid-\paramN-\paramE-dn-l1jacobi.dat};
			\addplot [mark=diamond,Dark2-E] table[x=k,y expr=-2*\thisrow{k}/log10(\thisrow{ufm_30}),col sep=comma,skip coords between index={0}{8}] {data-chebgrid-\paramN-\paramE-dn-l1jacobi.dat};
			\addplot [mark=o,Dark2-C] table[x=k,y expr=-2*\thisrow{k}/log10(\thisrow{w43}),col sep=comma] {data-chebgrid-\paramN-\paramE-dn-l1jacobi.dat};
			\addplot [mark=o,Dark2-D] table[x=k,y expr=-2*\thisrow{k}/log10(\thisrow{w32}),col sep=comma] {data-chebgrid-\paramN-\paramE-dn-l1jacobi.dat};
			
			\nextgroupplot[ylabel={time to sol. (s)},xlabel=$k$,ymax=16,ymin=0]
			
			\pgfmathsetmacro{\paramN}{8}
			\pgfmathsetmacro{\paramE}{128}
			\def\paramSmoother{l1jacobi}
			
			\addplot [mark=+,Dark2-A,style=thick,mark options={thick}] table[x=k,y=cheb4,col sep=comma] {data-chebgrid-timing-\paramN-\paramE-dn-\paramSmoother.dat};
			\addplot [mark=triangle*,Dark2-B,style=thick] table[x=k,y=opt,col sep=comma] {data-chebgrid-timing-\paramN-\paramE-dn-\paramSmoother.dat};
			\addplot [mark=x,Dark2-C] table[x=k,y=cheb1_3,col sep=comma] {data-chebgrid-timing-\paramN-\paramE-dn-\paramSmoother.dat};
			\addplot [mark=x,Dark2-D] table[x=k,y=cheb1_10,col sep=comma] {data-chebgrid-timing-\paramN-\paramE-dn-\paramSmoother.dat};
			\addplot [mark=x,Dark2-E] table[x=k,y=cheb1_30,col sep=comma] {data-chebgrid-timing-\paramN-\paramE-dn-\paramSmoother.dat};
			\addplot [mark=x,Dark2-F] table[x=k,y=cheb1_100,col sep=comma] {data-chebgrid-timing-\paramN-\paramE-dn-\paramSmoother.dat};
			\addplot [mark=x,Dark2-G] table[x=k,y=cheb1_300,col sep=comma,col sep=comma,skip coords between index={0}{3}] {data-chebgrid-timing-\paramN-\paramE-dn-\paramSmoother.dat};
			
			\addplot [mark=diamond,Dark2-C] table[x=k,y=ufm_3,col sep=comma] {data-chebgrid-timing-\paramN-\paramE-dn-\paramSmoother.dat};
			\addplot [mark=diamond,Dark2-D] table[x=k,y=ufm_10,col sep=comma,skip coords between index={0}{2}] {data-chebgrid-timing-\paramN-\paramE-dn-\paramSmoother.dat};
			\addplot [mark=diamond,Dark2-E] table[x=k,y=ufm_30,col sep=comma,skip coords between index={0}{8}] {data-chebgrid-timing-\paramN-\paramE-dn-\paramSmoother.dat};
			\addplot [mark=o,Dark2-C] table[x=k,y=w43,col sep=comma] {data-chebgrid-timing-\paramN-\paramE-dn-\paramSmoother.dat};
			\addplot [mark=o,Dark2-D] table[x=k,y=w32,col sep=comma] {data-chebgrid-timing-\paramN-\paramE-dn-\paramSmoother.dat};
			
			\nextgroupplot[ymin=0,ymax=40,xlabel=$k$,yticklabel pos=right]
			
			\pgfmathsetmacro{\paramN}{16}
			\pgfmathsetmacro{\paramE}{64}
			\def\paramSmoother{l1jacobi}
			
			\addplot [mark=+,Dark2-A,style=thick,mark options={thick}] table[x=k,y=cheb4,col sep=comma] {data-chebgrid-timing-\paramN-\paramE-dn-\paramSmoother.dat};
			\addplot [mark=triangle*,Dark2-B,style=thick] table[x=k,y=opt,col sep=comma] {data-chebgrid-timing-\paramN-\paramE-dn-\paramSmoother.dat};
			\addplot [mark=x,Dark2-C] table[x=k,y=cheb1_3,col sep=comma] {data-chebgrid-timing-\paramN-\paramE-dn-\paramSmoother.dat};
			\addplot [mark=x,Dark2-D] table[x=k,y=cheb1_10,col sep=comma] {data-chebgrid-timing-\paramN-\paramE-dn-\paramSmoother.dat};
			\addplot [mark=x,Dark2-E] table[x=k,y=cheb1_30,col sep=comma] {data-chebgrid-timing-\paramN-\paramE-dn-\paramSmoother.dat};
			\addplot [mark=x,Dark2-F] table[x=k,y=cheb1_100,col sep=comma] {data-chebgrid-timing-\paramN-\paramE-dn-\paramSmoother.dat};
			\addplot [mark=x,Dark2-G] table[x=k,y=cheb1_300,col sep=comma,col sep=comma,skip coords between index={0}{3}] {data-chebgrid-timing-\paramN-\paramE-dn-\paramSmoother.dat};
			
			\addplot [mark=diamond,Dark2-C] table[x=k,y=ufm_3,col sep=comma] {data-chebgrid-timing-\paramN-\paramE-dn-\paramSmoother.dat};
			\addplot [mark=diamond,Dark2-D] table[x=k,y=ufm_10,col sep=comma,skip coords between index={0}{3}] {data-chebgrid-timing-\paramN-\paramE-dn-\paramSmoother.dat};
			\addplot [mark=diamond,Dark2-E] table[x=k,y=ufm_30,col sep=comma,skip coords between index={0}{8}] {data-chebgrid-timing-\paramN-\paramE-dn-\paramSmoother.dat};
			\addplot [mark=o,Dark2-C] table[x=k,y=w43,col sep=comma] {data-chebgrid-timing-\paramN-\paramE-dn-\paramSmoother.dat};
			\addplot [mark=o,Dark2-D] table[x=k,y=w32,col sep=comma] {data-chebgrid-timing-\paramN-\paramE-dn-\paramSmoother.dat};
			
		\end{groupplot}
		\coordinate (c3) at ($(c1)!.5!(c2)$);
		\node[below] at (c3 |- current bounding box.south)
		{\pgfplotslegendfromname{leg-chebgrid}};	
		
	\end{tikzpicture}
	
	\caption{V-Cycle error contraction factors and smoother steps per digit for Poisson discretized on a Chebyshev grid using $\ell_1$-Jacobi smoothing.}
	\label{fig:nr-chebgrid}
\end{figure}

For our final example, we again consider Poisson discretized by the FEM on a 1024 by 1024 grid of bilinear rectangular elements. As with our previous example, we group the elements into $N$ by $N$ square macroelements. In this case, instead of altering the diffusion coefficient, we use Chebyshev nodal spacing within each macroelement. That is, element boundaries are located at the Chebyshev nodes $x_j = \cos \tfrac{j}{N} \pi$, $j = 0, \dotsc, N$, affinely mapped to each macroelement. A single macroelement for the case $N=8$ is illustrated in \cref{fig:chebyshev-macroelement}. We take homogeneous Dirichlet boundary conditions for the horizontal boundaries and Neumann for the vertical. We also use $\ell_1$-Jacobi \cite{baker_multigrid_2011} as the single-step smoother. This diagonal smoother has components $B_{ii}^{-1} = \sum_{j=1}^n |A_{ij}|$, and has the advantage of satisfying $\rho(BA) \le 1$ by construction (as can be seen from the Gershgorin circle theorem), so that it need not be rescaled.

We note that this problem is of practical interest as it serves as a proxy for a low-order based preconditioner for a high-order element problem, considered, e.g., in \cite{heys_algebraic_2005}. Also note that, while the high aspect ratio elements of our first model problem could be handled by a semi-coarsening strategy, such an approach is not viable here. Approaches based on algebraic multigrid can result in hierarchies with small approximation constants $C$ at each level, but typically at the expense of increased operator complexity. As a result, geometric multigrid can be competitive, depending on specifics of the problem. The use of polynomial acceleration of the smoother iteration, as we will see, can further increase the competitiveness of a geometric multigrid approach.

Asymptotic convergence rates $\norm{E}_A^2$ for the symmetric V-cycle as well as asymptotic smoother steps per decimal digit $-k/\log_{10} \norm{E}_A$ are shown in \cref{fig:nr-chebgrid} for the cases $N=8$ and $N=16$. Overall, results are quite similar to those for our first model problem. We have also included plots of time to solution---the time to reduce the 2-norm of the same random initial residual by the factor $10^{10}$ using a simple Matlab implementation of the symmetric multigrid V-cycle running on a desktop. We include the plot mainly to prove the point that the metric $-k/\log_{10} \norm{E}_A$ is indeed indicative of real performance. There are two principal differences. For one, the total number of smoothing steps in an actual run is necessarily a multiple of $2k$, leading to a ``quantization'' effect. More significantly, the time to solution includes the time taken by the prolongation and restriction steps, which scale with the number of V-cycles, thus typically decreasing with $k$. In particular, this causes even the simple smoothers to benefit from increasing $k$. More generally, the effect is that the $-k/\log_{10} \norm{E}_A$ metric may underpredict the optimal $k$. The time to solution plots also clearly demonstrate the benefit of using polynomial smoothers for this problem, and in going to higher polynomial degrees for the more difficult case, a trend that would continue if we increase $N$. Moreover, we see that the Chebyshev polynomials of the first kind with a well chosen parameter $\kappa$, the Chebyshev polynomials of the fourth kind, and the optimized polynomials are all competitive in terms of achieving the minimal time to solution.

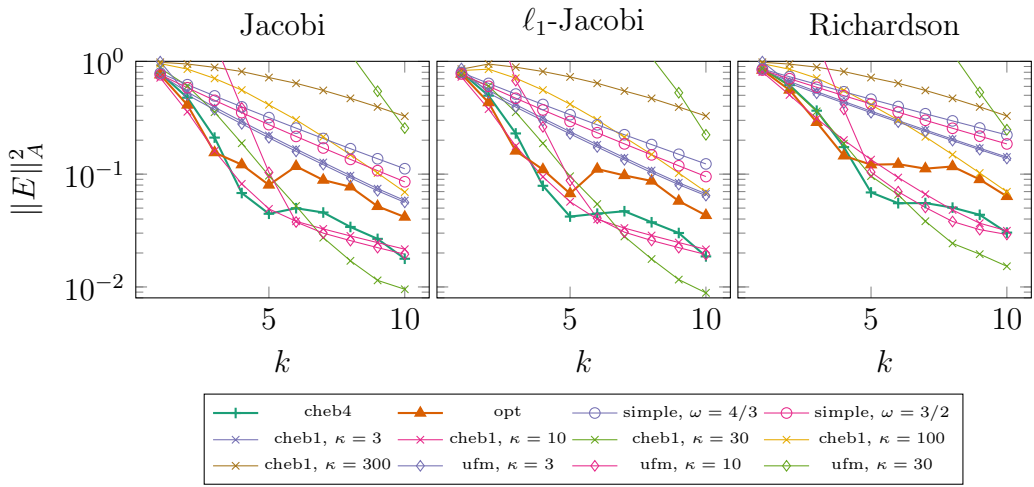
\begin{figure}[ptb]
	\centering
	\begin{tikzpicture}
		\begin{groupplot}[%
			group style={%
				group size=3 by 1,
				horizontal sep=.1cm,
				%vertical sep=2cm
			},
			width=.4\linewidth,
			ymax=1,
			ymin=.008
			]
			\nextgroupplot[title={Jacobi},ymode=log,ymax=1,ylabel={$\norm{E}_A^2$},xlabel=$k$,
			legend style={font=\tiny,at={($(0,0)+(1cm,1cm)$)}, legend columns=4,fill=none,draw=black,anchor=center,align=center},
			legend to name=leg-chebgrid3
			]
			
			\addlegendimage{mark=+,Dark2-A,style=thick,mark options={thick}}; \addlegendentry{cheb4};
			\addlegendimage{mark=triangle*,Dark2-B,style=thick}; \addlegendentry{opt};
			\addlegendimage{mark=o,Dark2-C}; \addlegendentry{simple, $\omega=4/3$};
			\addlegendimage{mark=o,Dark2-D}; \addlegendentry{simple, $\omega=3/2$};
			
			\addlegendimage{mark=x,Dark2-C}; \addlegendentry{cheb1, $\kappa=3$};
			\addlegendimage{mark=x,Dark2-D}; \addlegendentry{cheb1, $\kappa=10$};
			\addlegendimage{mark=x,Dark2-E}; \addlegendentry{cheb1, $\kappa=30$};
			\addlegendimage{mark=x,Dark2-F}; \addlegendentry{cheb1, $\kappa=100$};
			\addlegendimage{mark=x,Dark2-G}; \addlegendentry{cheb1, $\kappa=300$};
			
			\addlegendimage{mark=diamond,Dark2-C}; \addlegendentry{ufm, $\kappa=3$};
			\addlegendimage{mark=diamond,Dark2-D}; \addlegendentry{ufm, $\kappa=10$};
			\addlegendimage{mark=diamond,Dark2-E}; \addlegendentry{ufm, $\kappa=30$};
			
			\pgfmathsetmacro{\paramN}{8}
			\pgfmathsetmacro{\paramE}{128}
			\def\paramSmoother{jacobi}
			
			\addplot [mark=+,Dark2-A,style=thick,mark options={thick}] table[x=k,y=cheb4,col sep=comma] {data-chebgrid-\paramN-\paramE-dn-\paramSmoother.dat};
			\addplot [mark=triangle*,Dark2-B,style=thick] table[x=k,y=opt,col sep=comma] {data-chebgrid-\paramN-\paramE-dn-\paramSmoother.dat};
			\addplot [mark=x,Dark2-C] table[x=k,y=cheb1_3,col sep=comma] {data-chebgrid-\paramN-\paramE-dn-\paramSmoother.dat};
			\addplot [mark=x,Dark2-D] table[x=k,y=cheb1_10,col sep=comma] {data-chebgrid-\paramN-\paramE-dn-\paramSmoother.dat};
			\addplot [mark=x,Dark2-E] table[x=k,y=cheb1_30,col sep=comma] {data-chebgrid-\paramN-\paramE-dn-\paramSmoother.dat};
			\addplot [mark=x,Dark2-F] table[x=k,y=cheb1_100,col sep=comma] {data-chebgrid-\paramN-\paramE-dn-\paramSmoother.dat};
			\addplot [mark=x,Dark2-G] table[x=k,y=cheb1_300,col sep=comma] {data-chebgrid-\paramN-\paramE-dn-\paramSmoother.dat};
			
			\addplot [mark=diamond,Dark2-C] table[x=k,y=ufm_3,col sep=comma] {data-chebgrid-\paramN-\paramE-dn-\paramSmoother.dat};
			\addplot [mark=diamond,Dark2-D] table[x=k,y=ufm_10,col sep=comma] {data-chebgrid-\paramN-\paramE-dn-\paramSmoother.dat};
			\addplot [mark=diamond,Dark2-E] table[x=k,y=ufm_30,col sep=comma] {data-chebgrid-\paramN-\paramE-dn-\paramSmoother.dat};
			\addplot [mark=o,Dark2-C] table[x=k,y=w43,col sep=comma] {data-chebgrid-\paramN-\paramE-dn-\paramSmoother.dat};
			\addplot [mark=o,Dark2-D] table[x=k,y=w32,col sep=comma] {data-chebgrid-\paramN-\paramE-dn-\paramSmoother.dat};
			
			% (Relative) Coordinate at top of the first plot
			\coordinate (c1) at (rel axis cs:0,1);
			
			\nextgroupplot[title={$\ell_1$-Jacobi}, ymode=log,ymax=1,yticklabels={,,},xlabel=$k$]
			
			\pgfmathsetmacro{\paramN}{8}
			\pgfmathsetmacro{\paramE}{128}
			\def\paramSmoother{l1jacobi}
			
			\addplot [mark=+,Dark2-A,style=thick,mark options={thick}] table[x=k,y=cheb4,col sep=comma] {data-chebgrid-\paramN-\paramE-dn-\paramSmoother.dat};
			\addplot [mark=triangle*,Dark2-B,style=thick] table[x=k,y=opt,col sep=comma] {data-chebgrid-\paramN-\paramE-dn-\paramSmoother.dat};
			\addplot [mark=x,Dark2-C] table[x=k,y=cheb1_3,col sep=comma] {data-chebgrid-\paramN-\paramE-dn-\paramSmoother.dat};
			\addplot [mark=x,Dark2-D] table[x=k,y=cheb1_10,col sep=comma] {data-chebgrid-\paramN-\paramE-dn-\paramSmoother.dat};
			\addplot [mark=x,Dark2-E] table[x=k,y=cheb1_30,col sep=comma] {data-chebgrid-\paramN-\paramE-dn-\paramSmoother.dat};
			\addplot [mark=x,Dark2-F] table[x=k,y=cheb1_100,col sep=comma] {data-chebgrid-\paramN-\paramE-dn-\paramSmoother.dat};
			\addplot [mark=x,Dark2-G] table[x=k,y=cheb1_300,col sep=comma] {data-chebgrid-\paramN-\paramE-dn-\paramSmoother.dat};
			
			\addplot [mark=diamond,Dark2-C] table[x=k,y=ufm_3,col sep=comma] {data-chebgrid-\paramN-\paramE-dn-\paramSmoother.dat};
			\addplot [mark=diamond,Dark2-D] table[x=k,y=ufm_10,col sep=comma] {data-chebgrid-\paramN-\paramE-dn-\paramSmoother.dat};
			\addplot [mark=diamond,Dark2-E] table[x=k,y=ufm_30,col sep=comma] {data-chebgrid-\paramN-\paramE-dn-\paramSmoother.dat};
			\addplot [mark=o,Dark2-C] table[x=k,y=w43,col sep=comma] {data-chebgrid-\paramN-\paramE-dn-\paramSmoother.dat};
			\addplot [mark=o,Dark2-D] table[x=k,y=w32,col sep=comma] {data-chebgrid-\paramN-\paramE-dn-\paramSmoother.dat};
			
			\nextgroupplot[title={Richardson}, ymode=log,ymax=1,yticklabels={,,},xlabel=$k$]
			
			\pgfmathsetmacro{\paramN}{8}
			\pgfmathsetmacro{\paramE}{128}
			\def\paramSmoother{richardson}
			
			\addplot [mark=+,Dark2-A,style=thick,mark options={thick}] table[x=k,y=cheb4,col sep=comma] {data-chebgrid-\paramN-\paramE-dn-\paramSmoother.dat};
			\addplot [mark=triangle*,Dark2-B,style=thick] table[x=k,y=opt,col sep=comma] {data-chebgrid-\paramN-\paramE-dn-\paramSmoother.dat};
			\addplot [mark=x,Dark2-C] table[x=k,y=cheb1_3,col sep=comma] {data-chebgrid-\paramN-\paramE-dn-\paramSmoother.dat};
			\addplot [mark=x,Dark2-D] table[x=k,y=cheb1_10,col sep=comma] {data-chebgrid-\paramN-\paramE-dn-\paramSmoother.dat};
			\addplot [mark=x,Dark2-E] table[x=k,y=cheb1_30,col sep=comma] {data-chebgrid-\paramN-\paramE-dn-\paramSmoother.dat};
			\addplot [mark=x,Dark2-F] table[x=k,y=cheb1_100,col sep=comma] {data-chebgrid-\paramN-\paramE-dn-\paramSmoother.dat};
			\addplot [mark=x,Dark2-G] table[x=k,y=cheb1_300,col sep=comma] {data-chebgrid-\paramN-\paramE-dn-\paramSmoother.dat};
			
			\addplot [mark=diamond,Dark2-C] table[x=k,y=ufm_3,col sep=comma] {data-chebgrid-\paramN-\paramE-dn-\paramSmoother.dat};
			\addplot [mark=diamond,Dark2-D] table[x=k,y=ufm_10,col sep=comma] {data-chebgrid-\paramN-\paramE-dn-\paramSmoother.dat};
			\addplot [mark=diamond,Dark2-E] table[x=k,y=ufm_30,col sep=comma] {data-chebgrid-\paramN-\paramE-dn-\paramSmoother.dat};
			\addplot [mark=o,Dark2-C] table[x=k,y=w43,col sep=comma] {data-chebgrid-\paramN-\paramE-dn-\paramSmoother.dat};
			\addplot [mark=o,Dark2-D] table[x=k,y=w32,col sep=comma] {data-chebgrid-\paramN-\paramE-dn-\paramSmoother.dat};
			
			% (Relative) Coordinate at top of the second plot
			\coordinate (c2) at (rel axis cs:1,1);
			
		\end{groupplot}
		\coordinate (c3) at ($(c1)!.5!(c2)$);
		\node[below] at (c3 |- current bounding box.south)
		{\pgfplotslegendfromname{leg-chebgrid3}};	
		
	\end{tikzpicture}
	
	\caption{V-Cycle error contraction factors for different single-step smoothers for Poisson discretized on a Chebyhev grid with $N=8$.}
	\label{fig:nr-chebgrid3}
\end{figure}

In \cref{fig:nr-chebgrid3}, we compare the convergence rates when using three different diagonal single-step smoothers. Richardson performs somewhat worse than the other two, while Jacobi and $\ell_1$-Jacobi perform comparably, neither one being definitively better in all cases. As mentioned, $\ell_1$-Jacobi has the advantage of not needing to estimate the largest eigenvalue for rescaling. In particular, when combined with the fourth-kind Chebyshev or optimized iterations, the result is a truly parameter-free method.

\section{Conclusion}
The fourth-kind Cheybshev iteration \eqref{eq:cheb-iter} is a very simple way to accelerate the simple smoother iteration $(I-\omega BA)^k$ in a multigrid V-cycle for an SPD system, provided the smoother $B$ is also SPD. All that is required is an estimate of the largest eigenvalue of $BA$, $\rho(BA)$, which can be obtained from a few iterations of conjugate gradient or a generalized Lanczos iteration. No further parameters are involved. In particular, compared to the usual  (first-kind) Chebyshev semi-iteration, the fourth-kind iteration avoids the need to pick a minimum eigenvalue determining the range of eigenvalues to target. The use of a single-step smoother like $\ell_1$-Jacobi, which satisfies $\rho(BA) \le 1$ by construction, can further eliminate even the need to estimate the largest eigenvalue.

We have given to our knowledge the first multilevel V-cycle bound for general (SPD) polynomial smoothers, isolating the polynomial from the single-step smoother. The asymptotic convergence rate of the symmetric V-cycle is bounded by the largest value across levels of
\[ \frac{C}{C + \gamma^{-1}}, \quad \gamma \isdef \sup_{0 < \lambda \le 1} \frac{\lambda p(\lambda)^2}{1-p(\lambda)^2}, \]
where $p$ is the polynomial, assumed to correspond to a convergent iteration with $p(0)=1$, and where $C$ is the approximation property constant
\[C = \norm{A^{-1} - P A_c^{-1} P^*}_{A, B}^2 = \sup_{u \in \ran(P)^{\perp_A} \setminus \{0\}} \frac{\norm{u}_{B^{-1}}^2}{\norm{u}_A^2} \ge 1, \]
involving the single-step smoother $B$, assumed to be scaled such that $\rho(BA) \le 1$.

For the fourth-kind Chebyshev iteration, we have $\gamma^{-1} = \frac{4}{3}k(k+1)$ where $k$ is the number of smoothing steps (the degree of the polynomial), which is a significant improvement over $\gamma^{-1} = 2 \omega k$ for the simple damped iteration, with damping parameter $\omega$ not too large. Moreover, it is possible to numerically determine the polynomials that minimize $\gamma$, which appear to give $\gamma^{-1} \ge \frac{4}{\pi^2} (2k+1)^2 - \frac{2}{3}$, giving roughly an 18\% improvement in the convergence bound asymptotically as $k \to \infty$. The optimized polynomials can be implemented as a slight variation of the fourth-kind Chebyshev iteration, using a small set of coefficients computed and tabulated off-line.

In all our numerical experiments, whenever multiple smoothing steps were used, it was always the case that some polynomial performed better than the simple smoother iteration. Increasing smoothing steps was ineffective for our example with coefficient jumps, which featured an unbounded approximation constant $C$ while still attaining a V-cycle convergence rate bounded below 1. Our theory indicates that a bounded constant $C$ is a sufficient condition for polynomial methods to effectively improve the convergence rate as the number of smoothing steps increases. Moreover, we saw that with polynomial methods, the improved convergence with more smoothing steps can more than make up for the increased cost, with the optimal number of smoothing steps typically increasing with the approximation constant $C$.

In our experiments, the standard first-kind Chebyshev iteration was often able to perform somewhat better than the fourth-kind Chebyshev iteration and its optimized variant. However, this depended on a good choice of the parameter $\kappa$ determining the range of eigenvalues targeted, which we saw depended both on the problem and on the number of smoothing steps $k$, typically increasing both with $C$ and $k$. An optimal parameter in one context could perform quite poorly in another. In contrast, the two parameter-free methods tended to stay within ``striking distance'' of the best method, never performing very poorly. Moreover, they remained especially competitive, at least for the problems we considered, when considering only the most efficient number of smoothing steps $k$. In practice, if it is possible to spend time tuning (by amortizing the cost across solving many right-hand-sides), we would recommend trying all options, tuning the parameters involved, and picking the fastest method. Otherwise, we would argue that the fourth-kind Chebyshev iteration or its optimized variant make sensible defaults.

\printbibliography

\begin{appendices}
	
\crefalias{section}{appendix}

\section{Fourth-kind Chebyshev V-cycle Bound}
\label{sec:cheb-bound-proof}

The V-cycle convergence bound of \Cref{cor:cheb-bound} for the 4th kind Chebyshev polynomial smoothers follows from the fact that the scaled and shifted Chebyshev polynomails of the 4th kind
\[ p_k(\lambda) = \frac{1}{2k+1} W_k(1 - 2\lambda) \]
satisfy
\[ \sup_{0 < \lambda \le 1} \frac{\lambda \, p_k(\lambda)^2}{1 - p_k(\lambda)^2} = \frac{1}{\tfrac{4}{3}k(k+1)}, \]
which we prove below.

\begin{proof}[Proof of \cref{cor:cheb-bound}]
	If we let $n = 2k+1 \ge 3$ and $\phi=\tfrac{1}{2}\theta$ where $\cos \theta = 1 - 2 \lambda$, then we can write
	\[ p_k(\lambda) = \frac{1}{n} \frac{\sin n\phi}{\sin \phi} . \]
	For $\phi$ an integer multiple of $\tfrac{\pi}{n}$, $p_k(\lambda) = 0$. Otherwise,
	\[
	\frac{\lambda p_k(\lambda)^2}{1-p_k(\lambda)^2} = \frac{1}{f_n(\phi)}, \quad f_n(\phi) \isdef n^2 \csc^2 n \phi - \csc^2 \phi
	\]
	and it suffices to show
	\[
	\inf_{0<\phi \le \tfrac{\pi}{2}, \, \phi \notin \tfrac{\pi}{n} \mathbb{Z}} f_n(\phi) = \tfrac{4}{3} k (k+1) = \tfrac{1}{3}(n^2 - 1) .
	\]
	
	For $\tfrac{\pi}{n} < \phi \le \tfrac{\pi}{2}$, since $\sin \phi \ge \frac{2}{\pi} \phi \ge \frac{2}{n}$,
	\[ f_n(\phi) \ge n^2 - (\tfrac{n}{2})^2
	= \tfrac{3}{4} n^2 \ge \tfrac{1}{3}(n^2 - 1) . \]
	So we can assume $0 < \phi < \tfrac{\pi}{n}$. By differentiating the classical partial fraction expansion of cotangent,
	\[ \cot z = \frac{1}{z} + 2 z \sum_{j=1}^{\infty} \frac{1}{z^2-j^2 \pi^2}, \]
	we see that
	\[ \csc^2 z = \frac{1}{z^2} + 2 \sum_{j=1}^{\infty} \frac{z^2 + j^2 \pi^2}{(z^2-j^2 \pi^2)^2} \]
	so
	\[ \tfrac{1}{2} f_n(\phi) = 
	\sum_{j=1}^{\infty} \frac{\phi^2 + j^2 (\tfrac{\pi}{n})^2}{(\phi^2-j^2 (\tfrac{\pi}{n})^2)^2}
	- \sum_{j=1}^{\infty} \frac{\phi^2 + j^2 \pi^2}{(\phi^2-j^2 \pi^2)^2}.
	\]
	Since
	\[ \lim_{\phi \to 0} \tfrac{1}{2}f_n(\phi)
	=	\frac{n^2-1}{\pi^2} \sum_{j=1}^{\infty} \frac{1}{j^2} =
	\tfrac{1}{6}(n^2-1),
	\]
	it suffices to show that $f_n(\phi)$ is increasing on the interval $0 < \phi < \tfrac{\pi}{n}$. Differentiating term by term, we find
	\[ \tfrac{1}{2} f_n'(\phi) = 2 \phi \sum_{j=1}^{\infty} a_{n,j}(\phi) \]
	where
	\[a_{n,j}(\phi) \isdef \frac{3 j^2 (\tfrac{\pi}{n})^2 + \phi^2}{(j^2(\tfrac{\pi}{n})^2 - \phi^2)^3} - \frac{3 j^2 \pi^2 + \phi^2}{(j^2\pi^2 - \phi^2)^3} . \]
	If each term $a_{n,j}(\phi)$ in the convergent sum is nonnegative, it will follow that $f_n'(\phi) > 0$. We can write
	\[a_{n,j}(\phi) = 
	\frac{j^2(n^2-1)}{n^6 (j^2(\tfrac{\pi}{n})^2 - \phi^2)^3(j^2\pi^2 - \phi^2)^3} b_{n,j}(\phi) \]
	where
	\[\begin{split} b_{n,j}(\phi) \isdef {} & 6n^4 \pi^2 \phi^6 - 3n^2(n^2+1)j^2 \pi^4 \phi^4 + \\ & j^4(n^4-8n^2+1)\pi^6 \phi^2 + 3 (n^2+1) j^6 \pi^8 . \end{split} \]
	We can further write $b_{n,j}(\phi)$ as
	\[ b_{n,j}(\phi) = \left[ \sqrt{6} n^2\pi\phi^3 - \sqrt{\tfrac{3}{8}}(n^2+1) j^2 \pi^3 \phi \right]^2 + j^4 \pi^6 c_{n,j}(\phi) \]
	where
	\[ c_{n,j}(\phi) \isdef \tfrac{5}{8}(n^4-14n^2+1) \phi^2 +  3(n^2+1)j^2 \pi^2 . \]
	Since $n^4-14n^2+1 > 0$ for $n \ge 4$, it follows that $c_{n,j}(\phi) > 0$ for $n \ge 4$. On the other hand, for $n=3$,
	\[ c_{3,j}(\phi) = 30 j^2 \pi^2 - \tfrac{55}{2} \phi^2 >
	30 \pi^2 - \tfrac{55}{2} (\tfrac{\pi}{3})^2 = \tfrac{485 \pi^2}{18} > 0
	\] 
	since $j \ge 1$ and $0 < \phi < \tfrac{\pi}{3}$. So, in either case, $c_{n,j}(\phi) > 0$, so that $b_{n,j}(\phi) > 0$, whereby $a_{n,j}(\phi) > 0$, and thus $f_n(\phi)$ is increasing for $0 < \phi < \tfrac{\pi}{n}$.
\end{proof}

\section{Computing Optimal Polynomials} \label{sec:app-opt-poly}
The optimal polynomials for \cref{lem:multilevel-poly-bound} can be found numerically by applying Newton's method to a system of nonlinear equations expressing the equioscillation property. Let us represent the polynomial $p$ by its $k$ roots $r_1, \dotsc, r_k$,
\[ p = \prod_{i=1}^k \left(1-\frac{x}{r_i}\right). \]
Note that $p$ satisfies the required condition $p(0) = 1$. We seek to minimize $\gamma = \sup_{0 < x \le 1} f^2$ where
\[ f \isdef w^{1/2} p, \quad w \isdef \frac{x}{1-p^2}, \quad x > 0, \]
and where we take $w(0) \isdef [-2p'(0)]^{-1}$ so that $w$ and $f$ are continuous at $x=0$.
For the optimal $p$, $f$ will equioscillate,
\begin{equation} \label{eq:f-equioscillate} f(0) = |f(x_1)| = \dotsb = |f(x_{k-1})| = |f(1)| \end{equation}
where $x_i$ are the $k-1$ extrema of $f$, $f'(x_i) = 0$, located between the $k$ roots $r_i$ of $p$. For convenience,
take
\[ x_k \isdef 1 \]
and let $F:\mathbb{R}^k \to \mathbb{R}^k$ be the function of the vector of roots $r = \begin{bmatrix} r_1 & \hdots & r_k \end{bmatrix}^T$ with components
\[ F_i = f(0) - |f(x_i)| . \]
Then \eqref{eq:f-equioscillate} is equivalent to $F(r)=0$ and the optimal $r$ can be found with Newton's method. A natural initial guess for $r$ is the vector of roots of the shifted Chebyshev polynomial of the fourth kind,
\[
  r_i^{(0)} = \tfrac{1}{2}-\tfrac{1}{2}\cos \frac{i}{k+\tfrac{1}{2}} \pi, \quad i=1,\dotsc,k .
\]
A second, nested, Newton iteration can be used to compute the extrema $x_1, \dotsc, x_{k-1}$ of $f$. For the first outer iteration, a good initial guess for the extrema are those of the initial $p$,
\[
  x_i^{(0)} = \tfrac{1}{2}-\tfrac{1}{2}\cos \frac{i+\tfrac{1}{2}}{k+\tfrac{1}{2}} \pi, \quad i=1,\dotsc,k-1 .
\] 
Subsequent iterations can use the extrema from the previous iteration for the initial guess.

The total logarithmic derivative of $p$ is
\[ \frac{dp}{p} = \sum_{i=1}^k \frac{dx - \tfrac{x}{r_i} dr_i}{x - r_i} = q\,dx - \sum_{i=1}^k \frac{x}{x - r_i} \frac{ dr_i}{r_i}, \]
where
\[ q(x) \isdef \frac{p'(x)}{p(x)} = \sum_{i=1}^k \frac{1}{x-r_i}, \]
and the total logarithmic derivative of $f$ is
\begin{equation}\label{eq:f-total-log-deriv} \frac{df}{f} = \frac{dx}{2x} + \frac{w}{x} \frac{dp}{p} = (\tfrac{1}{2} + w q) \frac{dx}{x} - \sum_{i=1}^k \frac{w}{x-r_i} \frac{dr_i}{r_i} . \end{equation}
It follows that the extrema $x_i$ of $f$ are determined by the condition $\tfrac{1}{2} + w q = 0$, or equivalently by $g(x_i) = 0$ where
\[ g(x) \isdef \frac{x}{2 w(x)} + x q(x) = \frac{1-p(x)^2}{2} + x q(x) . \]
The negative derivative of $g(x)$ is
\[ -g'(x) = \sum_{i=1}^k \frac{1}{x-r_i} \left[ p(x)^2 + \frac{r_i}{x-r_i} \right], \]
giving the Newton step to update the extrema, $x_i \gets x_i - g(x_i) / g'(x_i)$.

Suppose $x$ is constrained to equal $x_i$, i.e., to be an extremum of $f$ when $i<k$ or else fixed at $1$ when $i=k$. In the first case $\tfrac{1}{2}+wq=0$ and in the second case $dx = 0$, so that in both cases \eqref{eq:f-total-log-deriv} reduces to
\[ \frac{df}{f} = - \sum_{j=1}^k \frac{w}{x-r_j} \frac{dr_j}{r_j}, \]
from which we can read off $\partial f(x_i)/\partial r_j$. Combining with $\partial f(0) / \partial r_j = f(0)^3 / r_j^2$, which can be derived from $f(0) = [-2q(0)]^{-1/2}$, we find that the components of the Jacobian $J$ of $F$ are
\[ J_{ij} = \frac{f(0)^3}{r_j^2} + \frac{w(x_i) |f(x_i)|}{r_j \, (x_i - r_j)}. \]
The Newton step for updating the vector of roots is then just $r \gets r + \Delta r$ where $J \Delta r = -F$. A full Matlab implementation of the algorithm is shown in \cref{fig:opt_roots.m}. Empirically, the initial guesses suggested above are good enough for convergence of the pair of Newton iterations, at least up to $k = 200$.

\lstset{
	style              = Matlab-editor,
	basicstyle         = \small\ttfamily,
	escapechar         = ",
	mlshowsectionrules = true,
}

\begin{figure}[ptb]
	\lstinputlisting{opt_roots.m}
	\caption{A Matlab implementation of Newton's method for finding the roots of the optimal polynomials for \cref{lem:multilevel-poly-bound}.}\label{fig:opt_roots.m}
\end{figure}

Once the roots of the optimal $p_k(\lambda)$ have been determined, we can compute the coefficients $\alpha_i$ of the expansion
\begin{equation}\label{eq:p-exp} p_k(\lambda) = \sum_{i=0}^k \alpha_i W_i(1-2\lambda) \end{equation}
of $p_k$ in the fourth-kind Chebyshev polynomial basis, so that
\[ \beta_0 = 1, \quad \beta_{i+1} = \beta_i - (2i+1) \alpha_i \]
then gives the coefficients $\beta_i$ for implementing $p_k$ in the iteration \eqref{eq:opt-iter}. By combining the orthogonality relation for the fourth-kind Chebyshev polynomials \cite{desmarais_tables_1995}
\[ \frac{1}{\pi} \int_{-1}^1 \sqrt{\frac{1-x}{1+x}} W_i(x)W_j(x) \; dx = \delta_{ij} \]
with the associated Gaussian quadrature rule \cite{desmarais_tables_1995},
\[ \frac{1}{\pi} \int_{-1}^1 \sqrt{\frac{1-x}{1+x}} p(x) \; dx = \sum_{i=1}^k \frac{1-x_i}{k+\tfrac{1}{2}} p(x_i), \quad x_i = \cos \frac{i}{k+\tfrac{1}{2}} \pi, \]
which holds for any polynomial $p(x)$ of degree at most $2k-1$, it follows that if \eqref{eq:p-exp} holds then
\[ \alpha_j = \sum_{i=1}^k \frac{1-x_i}{k+\tfrac{1}{2}} W_j(x_i) p\left(\frac{1-x_i}{2}\right) \quad\text{for $j \le k-1$.}\]
Note that $\alpha_k$ is not needed to compute $\beta_k$. A Matlab implementation of the computation of the coefficients $\beta_i$ is shown in \cref{fig:opt_coef.m}.

\begin{figure}[ptb]
	\lstinputlisting{opt_coef.m}
	\caption{Matlab code for computing the coefficients $\beta_i$ for the optimized iteration \eqref{eq:opt-iter}.}\label{fig:opt_coef.m}
\end{figure}

\end{appendices}

\end{document}